\definecolor{hypercolor}{HTML}{003399}
\newtheorem{thm}{Theorem}[section]
\newtheorem{lem}[thm]{Lemma}
\newtheorem{prop}[thm]{Proposition}
\newenvironment{customprop}[1]
{\innercustomprop}%
{\endinnercustomprop}
\newtheorem{cor}[thm]{Corollary}
\theoremstyle{definition}
\newtheorem{defn}[thm]{Definition}
\newtheorem{conj}[thm]{Conjecture}
\theoremstyle{remark}
\newtheorem{ex}[thm]{Example}
\newtheorem{rmk}[thm]{Remark}
\numberwithin{equation}{section}
\newcommand{\FA}{A}
\newcommand{\FF}{F}
\newcommand{\FG}{G}
\newcommand{\Ff}{f}
\newcommand{\Fg}{g}						
\newcommand{\Fh}{h}
\newcommand{\Fhh}{h_\diri}					% -x^2/t
\newcommand{\Fp}{p}
\newcommand{\Fu}{u}		
\newcommand{\Fuu}{u_\diri}					% -2x/t
\newcommand{\Fv}{v}
\newcommand{\fPhi}{\Phi}
\newcommand{\fPsi}{\Psi}
\newcommand{\Fphi}{\phi}
\newcommand{\Fpsi}{\psi}
\newcommand{\Pgamma}{\gamma}
\newcommand{\Peta}{\eta}
\newcommand{\Pchi}{\chi}
\newcommand{\Pzeta}{\zeta}
\newcommand{\Pl}{\ell}		
\newcommand{\Mmu}{\mu}						
\newcommand{\Mnu}{\nu}
\newcommand{\SA}{A}
\newcommand{\SB}{B}
\newcommand{\SD}{D}
\newcommand{\SO}{O}
\newcommand{\SQ}{Q}
\newcommand{\SR}{R}
\renewcommand{\SS}{S}
\newcommand{\SY}{Y}
\newcommand{\SZ}{Z}
\newcommand{\Snet}{\Gamma}					% network
\newcommand{\eps}{\varepsilon}
\newcommand{\intvl}{\Lambda}				% interval of conditioning
\newcommand{\textL}{{\normalfont\text{L}}}
\newcommand{\textR}{{\normalfont\text{R}}}
\newcommand{\landscape}{\mathcal{L}}
\newcommand{\fixedpt}{\mathcal{H}}
\newcommand{\dens}{\rho}
\newcommand{\metric}{e}	
\newcommand{\diri}{d}
\newcommand{\rate}{I}						% rate function for the directed landscape
\newcommand{\Rd}{\mathbb{R}_{\uparrow}^4}
\newcommand{\start}{a}						% starting time of a path
\newcommand{\ed}{b}							% ending time of a path
\newcommand{\Pgeo}{\pi}
\newcommand{\hypo}{\mathrm{Hypo}}
\newcommand{\EventT}{\mathcal{T}}			% conditioning event at time 1
\newcommand{\entbm}{q_{\normalfont\textsc{bm}}}% Brownian entropy
\newcommand{\entfbm}{r_{\normalfont\textsc{bm}}}% Brownian entropy flux
\newcommand{\Entbm}{Q_{\normalfont\textsc{bm}}}% Brownian entropy on h(1,\cdot)
\newcommand{\Engbm}{\mathcal{E}_{\normalfont\textsc{bm}}}% Brownian Energy
\newcommand{\Engbmn}[1]{\mathcal{E}_{\normalfont\textsc{bm},#1}}
\newcommand{\Entsp}{\mathscr{E}}			% space of EntBM
\newcommand{\Entspp}[1]{\Entsp(#1)}			% preliminary space of EntBM
\newcommand{\cone}{\mathcal{C}}				% light cone
\newcommand{\Stime}{\mathfrak{T}}			% time points
\newcommand{\Sbdy}{\mathfrak{B}}			% boundary paths
\newcommand{\Sdom}{\mathfrak{D}}			% domains
\newcommand{\dom}{E}						% elements of \Sdom
\newcommand{\Metric}{\mathsf{e}}			% Metrix [measure]
\newcommand{\Entp}{\mathsf{Ent}}			% Entropy production [ v ]
\newcommand{\Hfn}{\mathsf{H}}				% Height function [ measure ]
\newcommand{\M}{\mathsf{M}}					% Measure [ weak solution ]
\newcommand{\Mnon}{\mathsf{M}_{\mathsf{non}}}% non-entropic part of \M
\newcommand{\Ment}{\mathsf{M}_{\mathsf{ent}}}% entropic part of \M
\newcommand{\graph}{\mathfrak{g}}			% graph of a path
\newcommand{\HL}{\mathsf{HL}}				% Hopf-Lax operator 
\newcommand{\HLbk}{\mathsf{HL}^{\mathsf{bk}}}% backward Hopf-Lax operator
\newcommand{\R}{\mathbb{R}}
\newcommand{\Z}{\mathbb{Z}}
\newcommand{\Csp}{C}
\newcommand{\Lsp}{L}
\newcommand{\lsp}{\ell}
\newcommand{\Hsp}{H}
\newcommand{\Msp}{\mathscr{N}}				% finite sign measure supported on finite network				
\newcommand{\norm}[1]{\Vert #1\Vert}
\renewcommand{\P}{\mathbf{P}}
\newcommand{\ind}{\mathbf{1}}			
\newcommand{\supp}{\mathrm{supp}}		
\renewcommand{\d}{\mathrm{d}}		
\renewcommand{\bar}{\overline}
\newcommand{\und}{\underline}
\newcommand{\til}{\widetilde}
\title{Solving marginals of the LDP for the directed landscape}
\author{Sayan Das and Li-Cheng Tsai}
\address[Sayan Das]{Department of Mathematics, University of Chicago}
\address[Li-Cheng Tsai]{Department of Mathematics, University of Utah}
\subjclass[2020]{%
60F10%Large deviations
%not used 76L05	Shock waves and blast waves in fluid mechanics
}%
\keywords{%
Burgers' equation, %
directed landscape, %
Kruzhkov entropy, %
Kardar--Parisi--Zhang universality class, %
large deviations%
}%
\begin{document}
\begin{abstract}
We prove the upper-tail Large Deviation Principle (LDP) for the parabolic Airy process and characterize the limit shape of the directed landscape under the upper-tail conditioning.
The LDP result answers Conjecture 10.1 in \cite{das24}. 
The starting point of our proof is the metric-level LDP for the directed landscape from \cite{das24} that reduces our work to solving a variational problem. 
Our proof is PDE-based and uses geometric arguments, connecting the variational problem to the weak solutions of Burgers' equation.
Further, our method may generalize to the setting of the upper-tail LDP for the KPZ fixed point under the multi-wedge initial data, and we prove a decomposition result in this direction.
\end{abstract}

\maketitle

\section{Introduction}
\label{s.intro}

The directed landscape is a random directed metric that is believed to be the universal scaling limit of models in the Kardar--Parisi--Zhang (KPZ) universality class \cite{kardar86,quastel2011introduction,corwin2012kardar,quastel2015one,ganguly2021random}. 
It was constructed in \cite{DOV18} from the Brownian last passage percolation, and is shown to be the scaling limit of a handful of integrable models \cite{dv22,wu2023kpz,aggarwal2024scaling}.
Let $\Rd:=\{(s,y;t,x)\in \mathbb R^4, s<t\}$.
The directed landscape $\landscape$ a random continuous function from $\Rd$ to $\R$. 
For fixed $s<t$ and $y$, the law of $\landscape(s,y;t,\cdot)$ is given by
\begin{align}
	\label{e.pairy}
	\landscape(s,y;t,\cdot) 
	\stackrel{\text{law}}{=} 
	(t-s)^{1/3} \operatorname{Airy}_2(\,\cdot\,) -\tfrac{(\,\cdot\,-y)^2}{t-s},
\end{align}
where $\operatorname{Airy}_2$ denotes the Airy 2 process \cite{prahofer2002scale,corwin2014brownian}, and the right-hand side of \eqref{e.pairy} is often called the parabolic Airy process. 
The time-space marginal $\landscape(0,0;\cdot,\cdot)$ is given by the KPZ fixed point with the narrow-wedge initial data \cite{matetski2021kpz,nica2020one}. 

In this paper, we prove the upper-tail Large Deviation Principle (LDP) for some of the marginals of the directed landscape.
To set up the context, we recall the metric-level LDP for the directed landscape recently proven in \cite{das24}. 
Let $\mathcal{E}$ denote the set of all continuous functions $e:\Rd\to\R$ satisfying the (reverse) triangle inequality 
\begin{align}
	\label{e.triangle}
	\metric(s,y;u,z)+\metric(u,z;t,x) \le \metric(s,y;t,x),
	\qquad
	(s,y;u,z), (u,z;t,x)\in\Rd.
\end{align}
Equip $\mathcal{E}$ with the topology of uniform convergence on bounded sets.
Given a topological space $\mathcal{X}$, we call $J:\mathcal{X}\to[0,\infty]$ a \textbf{good rate function} if $J$ is lower-semicontinuous and if $\{J\leq \alpha\}$ is pre-compact for every $\alpha<\infty$.
Scale the directed landscape as $\landscape_\eps(y,s;t,x):=\eps\landscape(s,y\eps^{-1/2};t,x\eps^{-1/2})$.
Under such scaling, the law of large numbers of $\landscape_\eps$ is given by the \textbf{Dirichlet metric}
\begin{align}
	\label{e.diri}
	\diri(s,y;t,x) := -\tfrac{(x-y)^2}{t-s}.
\end{align}
It was shown in \cite{das24} that $\{\landscape_\eps\}_{\eps}$ satisfies an LDP with speed $\eps^{-3/2}$ and an explicit rate function $\rate$ that is good.
% $\rate^{-1}(0)=\{\diri\}$, and that $\rate$ is good and strictly monotone:
%\begin{align}
%	\label{e.smonotone}
%	\text{For all distinct } \metric \leq \metric' \in \mathcal{E} \text{ with } \rate(\metric),\rate(\metric')<\infty,
%	\qquad
%	\rate(\metric) < \rate(\metric').
%	\qquad
%	\qquad
%\end{align}
%More precisely, there exists a lower semicontinuous function $I : \mathcal E \to [0,\infty]$ such that for every Borel measurable  $A\subset \mathcal E$, as $\e\to 0$ we have
%\begin{equation*}   
%	\exp((o(1)-\inf_{A^\circ} I)\e^{-3/2}) \le P( \landscape_\e\in A) \le \exp((o(1)-\inf_{\bar{A}}I)\e^{-3/2}).
%\end{equation*}
%
%They further showed that $\rate^{-1}[0,\alpha]$ is compact for every $\alpha<\infty$,  $\rate^{-1}(0)=\diri$, and $\rate$ is strictly increasing on $\rate^{-1}[0,\infty)$. $I$ has a fairly explicit description. However, to describe $I$ we need a few more necessary definitions.

The work \cite{das24} offers several ways of expressing the rate function $\rate$.
The one most relevant to us one uses the language of measures, which we now recall.
Throughout this paper, a \textbf{path} means an $\Hsp^1$ function on a closed time interval, and we use $\start(\Peta)$ and $\ed(\Peta)$ to denote the starting and ending time of the path $\Peta$, namely $\Peta\in\Hsp^1[\start(\Peta),\ed(\Peta)]$.
Let $\graph(\Peta):=\{(t,\Peta(t)):t\in[\start(\Peta),\ed(\Peta)]\}$ denote the graph of $\Peta$.
Paths being \textbf{internally disjoint} means their graphs are disjoint except at their endpoints. 
Consider the space of finite signed measures supported on countably many paths
\begin{align}
	\label{e.Msp}
	\Msp 
	&:= 
	\Big\{ \mu=\sum_{\Pgamma\in\Snet} \dens_\Pgamma\,\delta_{\Pgamma} 
	\ : \ 
	\dens_\Pgamma \in \Lsp^{3/2} [\start(\Pgamma),\ed(\Pgamma)], \
	\sum_{\Pgamma\in\Snet} \norm{\dens_\Pgamma}_{\Lsp^{3/2}[\start(\Pgamma),\ed(\Pgamma)]} < \infty \Big\},
\end{align}
where $\Snet$ is a countable set of internally disjoint paths, and $\mu$ acts on $\FF\in\Csp_\mathrm{b}(\R^2)$ by
$\langle\mu,\FF\rangle := \sum_{\Pgamma\in\Snet} \int_{[\start(\Pgamma),\ed(\Pgamma)]} \d t \, \dens_\Pgamma(t)\, \FF(t,\Pgamma(t))$.
We call $\dens_{\Pgamma}$ the \textbf{density}.
Let $\Msp_+:=\{\Mmu\in\Msp:\Mmu \geq 0\}$ denote the subspace of positive measures. 
Hereafter, write $\dot{\Peta}=\frac{\d~}{\d t} \Peta$ for the time derivative and 
\begin{align}
	\label{e.connect}
	\text{ write }(s,y)\xrightarrow{\Peta}(t,x) \text{ for ``}\Peta\text{ is a path that connects }(s,y)\text{ to }(t,x)\text{''},
\end{align}
namely $\start(\Peta)=s$, $\ed(\Peta)=t$, $\Peta(s)=y$, and $\Peta(t)=x$.
Given a $\Mmu\in \Msp_+$, define the metric $\Metric_{\Mmu}$ and the length $|\,\cdot\,|_{\Metric_{\Mmu}}$ by
\begin{align}
	\label{e.Mmu}
	\Metric_{\Mmu}(y,s;t,x) := \sup\big\{ |\Peta|_{\Metric_{\Mmu}} : (s,y)\xrightarrow{\Peta}(t,x) \big\},
	\qquad
	|\,\Peta\,|_{\Metric_{\Mmu}} := \mu(\graph(\Peta))+|\Peta|_{\diri}\,,
\end{align}
where $|\Peta|_{\diri}:=-\int_{[\start(\Peta),\ed(\Peta)]} \d t \,\dot{\eta}^2$.
It was shown in \cite{das24} that $\Mmu\mapsto\Metric_{\Mmu}$ is a bijection from $\Msp_{+}$ to $\{\metric\in\mathcal{E}: \rate(\metric)<\infty\}$.
Namely, metrics with finite rates are in bijective correspondence with measures in $\Msp_{+}$.
Further, for any such metric, the rate function can be written as follows:
\begin{align}
	\label{e.rate}
	\text{For }\mu=\sum_{\Pgamma\in\Snet} \dens_\Pgamma\,\delta_{\Pgamma},
	\qquad
	\rate(\Metric_{\Mmu}) 
	=
	\sum_{\Pgamma\in\Snet} \int_{\start(\Pgamma)}^{\ed(\Pgamma)} \d t \, \frac{4}{3} \dens_{\Pgamma}^{3/2}.
\end{align}

To avoid ambiguity, we will refer to the right-hand side of \eqref{e.rate} as the rate function.
The right-hand side of \eqref{e.rate} is called the Kruzhkov entropy in \cite{das24}, but we reserve the name Kruzhkov entropy for $\entbm(\alpha):= \alpha^2/4$ and the name Kruzhkov entropy production for $\Entp_\pm$; both defined in Section~\ref{s.burgers.kruzkov}.
As will be seen there (in particular in \eqref{e.rate.M}), $\rate$ and $\Entp_\pm$ are indeed closely related.
However, there is a \emph{subtle difference} between the two, which we will discuss in Section~\ref{s.burgers.comparing}.

%Given this result, the determination of the
%large deviation rate for the marginals is reduced to solving  variational problems involving entropies. 
%We provide an explicit rate function for the upper-tail LDP of the parabolic $\operatorname{Airy}_2$ process and see Section 10.1 and Problem 10.1 in \cite{das24}.

\subsection{Result for wedge initial data}
\label{s.intro.wedge}
Our main result concerns the LDP for $\landscape_{\eps}(0,0;1,\cdot)$.
We view this as the time-one LDP for the KPZ fixed point under the narrow-wedge initial data, which is equivalent to the LDP for the parabolic Airy process.
Let $\intvl$ be a union of finitely many bounded closed intervals and let $\Ff:\intvl\to\R$ be a Borel function such that $\Ff(x)\geq-x^2$ on $\intvl$.
The LDP from \cite{das24} together with the contraction principle gives
\begin{align}
	\label{e.contraction.wedge}
	&\limsup_{r\to 0}\limsup_{\eps\to 0} 
	\Big|\eps^{3/2} \log \P\big[ \norm{\landscape_{\eps}(0,0;1,\cdot)- \Ff}_{\Lsp^\infty(\intvl)} < r \big] + \rate_{0,0;1,\intvl}(\Ff) \Big| = 0,
	\\
	\label{e.rate.wedge}
	&\rate_{0,0;1,\intvl}(\Ff) :=\inf\big\{ \rate(\Metric_{\Mmu}) : \Mmu\in\Msp_+,\ \Metric_{\Mmu}(0,0;1,\cdot)|_{\intvl} = \Ff \big\},
	\quad
	\inf\emptyset := \infty.	
\end{align}

Our result gives an explicit description of $\rate_{0,0;1,\intvl}(\Ff)$ and characterizes the minimizer of \eqref{e.rate.wedge}.
To set up the notation, for $c\in(0,\infty)$, consider the space $\Entspp{c}:=\{ \Fphi\in\Csp(\R) : \Fphi|_{[-c,c]}\in\Hsp^1[-c,c], \ \Fphi(x)\geq -x^2 \text{ on } \R, \ \Fphi(x)=-x^2 \text{ outside of } [-c,c] \}$, let $\Entsp := \cup_{c\in(0,\infty)} \Entspp{c}$, and
\begin{align}
	\label{e.Engbm}
	\Engbm(\intvl,\Ff) 
	:= 
	\inf\Big\{ \frac{1}{4} \int_{\R} \d x \, \big( ( \partial_x \Fphi \,)^2 - 4x^2 \big) : \Fphi\in\Entsp, \ \Fphi|_{\intvl}=\Ff \Big\},
	\qquad
	\inf\emptyset := \infty.
\end{align}
When $\Engbm(\intvl,\Ff)<\infty$, the infimum in \eqref{e.Engbm} has a unique minimizer $\Ff_*\in\Entsp$, which is depicted in Figure~\ref{f.Ff*} and described in Section~\ref{s.pfmain.Engbm}.
Generalizing the notation in \eqref{e.connect}, for $B\subset\R^2$, we write $(s,y)\xrightarrow{\Peta}B$ for ``$\Peta$ is a path that connects $(s,y)$ to a point in $B$'', and similarly for $A\xrightarrow{\Peta}(t,x)$. 
Take the $\Ff_*(x)$ as the terminal data at $t=1$ and construct $\Fh_*(t,x)$, for $(t,x)\in(0,1]\times\R$, via the backward Hopf--Lax operator as
\begin{align}
	\label{e.Fh*}
	\Fh_*(t,x)
	:=
	\HLbk_{1\to t}\big[\Ff_*](x)
	:=
	\inf\big\{ -|\Pchi|_{\diri} + \Ff_*(\Pchi(1)) : (t,x)\xrightarrow{\Pchi}\{1\}\times\R \big\}.
\end{align}
\begin{thm}\label{t.main}
\begin{enumerate}%[leftmargin=20pt,label=(\alph*)]
\item[]
\item \label{t.main.inf}
Let $\intvl$, $\Ff$, $\Engbm(\intvl,\Ff)$, $\Ff_*$, and $\Fh_*$ be as above.
\begin{align}
	\label{e.t.main}
	\inf\big\{ \rate(\Metric_{\Mmu}) : \Mmu\in\Msp_+,\ \Metric_{\Mmu}(0,0;1,\cdot)|_{\intvl} = \Ff \big\} = \Engbm(\intvl,\Ff),
\end{align}
and when this is finite, any minimizer satisfies $ \Metric_{\Mmu}(0,0;t,x) = \Fh_*(t,x)$ on $(t,x)\in(0,1]\times\R$.
\item \label{t.main.measure}
When $\Ff$ is continuous, and piecewise linear or quadratic, the minimizer of \eqref{e.t.main} is unique and given by $\Mmu_*=\M[\Fh_*]$, where $\M[\,\cdot\,]$ is defined later in \eqref{e.M}.
\end{enumerate}
\end{thm}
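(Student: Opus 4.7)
The plan is to establish part (\ref{t.main.inf}) by proving both inequalities in \eqref{e.t.main} and simultaneously characterizing any minimizer, then to deduce part (\ref{t.main.measure}) from a rigidity argument that leverages the piecewise structure of $\Ff$.

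\textbf{Lower bound.} Fix $\Mmu \in \Msp_+$ with $\Metric_{\Mmu}(0,0;1,\cdot)|_\intvl = \Ff$ and finite rate, and set $\Fphi_\Mmu(x) := \Metric_{\Mmu}(0,0;1,x)$. The straight line from $(0,0)$ to $(1,x)$ in \eqref{e.Mmu} yields $\Fphi_\Mmu(x) \geq -x^2$; decay of the $\Lsp^{3/2}$ densities away from the compactly ``active'' region of $\Mmu$ forces $\Fphi_\Mmu(x) = -x^2$ outside a compact set; and the finite-rate condition places $\Fphi_\Mmu|_{[-c,c]}$ in $\Hsp^1$, so $\Fphi_\Mmu \in \Entsp$. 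The key estimate is
\[
\rate(\Metric_{\Mmu}) \;\geq\; \tfrac{1}{4} \int_\R \bigl( (\partial_x \Fphi_\Mmu)^2 - 4x^2 \bigr) \d x,
\]
which, combined with $\Fphi_\Mmu|_\intvl = \Ff$ and \eqref{e.Engbm}, gives the $\geq$ direction of \eqref{e.t.main}. The estimate itself is a Legendre-type duality between the path cost $\tfrac{4}{3}\dens^{3/2}$ and the spatial Kruzhkov entropy $\entbm(\alpha)=\alpha^2/4$, which I would implement through the Kruzhkov entropy production framework developed in Section~\ref{s.burgers.kruzkov}.

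\textbf{Upper bound and construction.} I first establish existence and uniqueness of the minimizer $\Ff_*$ of \eqref{e.Engbm} via strict convexity of $\alpha\mapsto\alpha^2/4$ and coercivity on $\Entsp$. Given $\Ff_*$, define $\Fh_*$ as in \eqref{e.Fh*} and set $\Mmu_* := \M[\Fh_*]$. The profile $\Fh_*$ solves a backward Hamilton--Jacobi equation with terminal data $\Ff_*$; equivalently $\Fu_* := \partial_x \Fh_*$ is an entropy weak solution of backward Burgers' equation with terminal data $\partial_x \Ff_*$, and its shock curves support $\Mmu_*$ with densities pinned down by the Rankine--Hugoniot jump magnitudes. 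I then verify: (i) $\Metric_{\Mmu_*}(0,0;t,x) = \Fh_*(t,x)$ on $(0,1]\times\R$, obtained by matching the Hopf--Lax representation of $\Fh_*$ with the geodesic supremum in \eqref{e.Mmu}, using backward characteristics as extremal paths; and (ii) $\rate(\Metric_{\Mmu_*}) = \Engbm(\intvl,\Ff)$, obtained by an integration-by-parts identity whereby $\tfrac{4}{3}\dens^{3/2}$ on each shock equals the locally dissipated Kruzhkov entropy of $\Fu_*$, and summing recovers $\tfrac{1}{4}\int((\partial_x \Ff_*)^2 - 4x^2)\d x$.

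\textbf{Characterization and part (\ref{t.main.measure}).} If $\Mmu$ attains the infimum in \eqref{e.t.main}, equality must hold throughout the lower-bound chain, forcing $\Fphi_\Mmu = \Ff_*$ on $\R$ by uniqueness of the $\Engbm$-minimizer. Applying the lower-bound argument on each restricted interval $(0,t)$ in place of $(0,1)$ then pins down $\Metric_{\Mmu}(0,0;t,\cdot) = \Fh_*(t,\cdot)$ for every $t \in (0,1]$. For part (\ref{t.main.measure}), when $\Ff$ is continuous and piecewise linear or quadratic, $\Fh_*$ has an explicit shock structure consisting of finitely many piecewise-smooth curves. Any candidate minimizer $\Mmu$ must have its active paths contained in this shock set (else either $\Metric_{\Mmu}(0,0;\cdot,\cdot) \neq \Fh_*$ or the rate is strictly increased), and the Rankine--Hugoniot jumps determine the densities along each shock; combined with the $\Msp_+$-to-metric bijection recalled before \eqref{e.rate}, this forces $\Mmu = \M[\Fh_*]$.

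\textbf{Main obstacle.} The technical heart is the lower-bound inequality on $\rate(\Metric_\Mmu)$ for arbitrary $\Mmu$, since the measure may be supported on countably many intersecting paths whose densities can be singular near $t=0$ (where $\diri$ blows up). I expect to handle this by time-slicing: represent $\Fphi_\Mmu$ through iterated Hopf--Lax-type updates over small intervals $[t-\delta,t]$, estimate the accumulated path cost on each slice against the corresponding change in the spatial profile, and pass $\delta \to 0$ to recover the $\entbm$ production bound. The $t=0$ singularity should be absorbed because the finite-rate condition controls the total $\Lsp^{3/2}$-norm of the densities, damping contributions near $t=0$.
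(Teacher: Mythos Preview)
Your outline has the right architecture, but the central step---the lower bound $\rate(\Metric_\Mmu)\geq\Entbm(\Fphi_\Mmu)$ for an \emph{arbitrary} finite-rate $\Mmu$---is not established, and the paper in fact avoids proving it directly. First, $\Fphi_\Mmu\in\Entsp$ is not automatic: nothing forces $\Mmu$ to be cone-supported, so $\Fphi_\Mmu(x)+x^2$ need not vanish outside a compact set, nor is $\Fphi_\Mmu\in\Hsp^1_{\mathrm{loc}}$ obvious. More importantly, the Kruzhkov identity (Proposition~\ref{p.key}) that links $\rate$ to $\Entbm$ is only available for tractable solutions, and $\partial_x\Hfn[\Mmu]$ is only shown to be tractable when $\Mmu$ is piecewise linear and cone-supported (Proposition~\ref{p.pwlinear}, proved via the intricate induction of Lemma~\ref{l.pwlinear}). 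The paper therefore does not attempt a direct time-slicing bound: it approximates $\Mmu$ by piecewise-linear cone-supported $\Mmu_n$ (Definition~\ref{d.approx}), applies Propositions~\ref{p.pwlinear} and~\ref{p.key} to each $\Mmu_n$, and passes to the limit via Lemma~\ref{l.approx.metric} and the continuity~\eqref{e.Engbmn.conti}. Your proposed time-slicing would, on each slice, still need to compare the accumulated path cost to a Kruzhkov increment, which is exactly the content of those propositions; there is no elementary Legendre duality that bypasses it.

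Two further gaps. For the upper bound, $\Ff_*$ is in general \emph{not} piecewise linear or quadratic, so Lemma~\ref{l.hopflax}\eqref{l.hopflax.bk} does not apply to $\Fh_*$ directly and $\M[\Fh_*]$ is not even defined in the paper's framework; the paper instead works with the discretized $\Ff_{*\intvl_m}$ and passes $m\to\infty$ using that $\rate$ is a good rate function. For the characterization $\Fh=\Fh_*$ at intermediate times, ``apply the lower bound on $(0,t)$'' is circular, since there is no constraint at time $t$ to minimize against. The inequality $\Fh\leq\Fh_*$ does follow easily from the triangle inequality and~\eqref{e.Fh*}, but $\Fh\geq\Fh_*$ is the delicate direction: the paper first extracts from the approximation that $\limsup_n\rate(\Ment[\Fh_n])$ is small (since $\Mmu$ is a minimizer), then uses Proposition~\ref{p.pwlinear}\eqref{p.pwlinear.path} to build forward paths from $(t,x)$ to $\{1\}\times\R$ along which $\Fh_n$ increases by $|\Pgeo|_\diri$ plus a negligible $\Ment$-term, and passes to the limit via Lemma~\ref{l.approx.path}. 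This genuinely uses the vanishing of the entropy-shock production, which a static bound at $t=1$ does not see.
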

\noindent%
Theorem~\ref{t.main} has a corollary about the limit shape of the directed landscape.
\begin{cor}\label{c.main}
Notation as in Theorem~\ref{t.main} and let $\EventT(r):=\{ \norm{\landscape_\eps(0,0;1,\cdot)-\Ff}_{\Lsp^\infty(\intvl)} < r \}$.
\begin{enumerate}%[leftmargin=20pt,label=(\alph*)]
\item \label{c.main.inf}
If $\Engbm(\intvl,\Ff)<\infty$, then for any compact $\SA\subset(0,1]\times\R$ and any $r'>0$,
\begin{align}
	\limsup_{r\to 0} \limsup_{\eps\to 0} 
	\eps^{-3/2} \log \P\Big[ \,\norm{\landscape_\eps(0,0;\cdot,\cdot)-\Fh_*}_{\Lsp^\infty(\SA)} \geq r' \Big| \EventT(r) \Big] < 0.
\end{align}
\item \label{c.main.measure}
If $\Ff$ is continuous, and piecewise linear or quadratic, for any compact $\SB\subset\R^4_{\uparrow}$ and $r'>0$,
\begin{align}
	\limsup_{r\to 0} \limsup_{\eps\to 0} 
	\eps^{-3/2} \log \P\Big[ \,\norm{\landscape_\eps-\Metric_{\Mmu_*}}_{\Lsp^\infty(\SB)} \geq r' \Big| \EventT(r) \Big] < 0.
\end{align}
\end{enumerate}
In words, conditioned on $\EventT(r)$ with $r\to 0$, Part~\ref{c.main.inf} asserts that $\landscape_\eps(0,0;\cdot,\cdot)$ concentrates around $\Fh_*$, and Part~\ref{c.main.measure} asserts that $\landscape_\eps$ concentrates around $\Metric_{\Mmu_{*}}$.
Both concentrations hold up to exponentially small probabilities and under their respective assumptions on $\Ff$.
\end{cor}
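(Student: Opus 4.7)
The corollary is a standard consequence of the metric-level LDP in \cite{das24} together with the uniqueness content of Theorem~\ref{t.main}, via a ratio of lower and upper LDP bounds combined with a compactness argument driven by goodness of $\rate$. I treat Part~\ref{c.main.inf}; Part~\ref{c.main.measure} is identical modulo the choice of target. Write $\EventT(r)=\{\landscape_\eps\in O_r\}$ where $O_r:=\{e\in\mathcal{E}:\norm{e(0,0;1,\cdot)-\Ff}_{\Lsp^\infty(\intvl)}<r\}$, which is open in the topology of uniform convergence on bounded sets of $\Rd$ because $\intvl$ is bounded. Theorem~\ref{t.main}\ref{t.main.inf} supplies a minimizer $\Metric_{\Mmu}\in O_r$ with $\rate(\Metric_{\Mmu})=\Engbm(\intvl,\Ff)$, so the LDP lower bound gives
\begin{align*}
	\liminf_{\eps\to 0}\eps^{3/2}\log\P[\EventT(r)]\geq -\Engbm(\intvl,\Ff).
\end{align*}
Setting $\SK(r,r'):=\{e\in\overline{O_r}:\norm{e(0,0;\cdot,\cdot)-\Fh_*}_{\Lsp^\infty(\SA)}\geq r'\}$, which is closed since $\SA$ is compact, and using the containment $\EventT(r)\cap\{\norm{\landscape_\eps(0,0;\cdot,\cdot)-\Fh_*}_{\Lsp^\infty(\SA)}\geq r'\}\subset\{\landscape_\eps\in\SK(r,r')\}$, the LDP upper bound yields
\begin{align*}
	\limsup_{\eps\to 0}\eps^{3/2}\log\P\!\left[\EventT(r)\cap\{\norm{\landscape_\eps(0,0;\cdot,\cdot)-\Fh_*}_{\Lsp^\infty(\SA)}\geq r'\}\right] \leq -J(r,r'),
\end{align*}
with $J(r,r'):=\inf_{\SK(r,r')}\rate$.

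Subtracting the two bounds, the desired negative $\limsup_{r}\limsup_\eps$ of the conditional log-probability will follow once we establish $\liminf_{r\to 0}J(r,r')>\Engbm(\intvl,\Ff)$ for each fixed $r'>0$. Argue by contradiction: if $r_n\downarrow 0$ and $e_n\in\SK(r_n,r')$ satisfy $\rate(e_n)\to\Engbm(\intvl,\Ff)$, goodness of $\rate$ makes $\{e_n\}$ pre-compact, so along a subsequence $e_n\to e_*$ uniformly on bounded sets of $\Rd$. Letting $n\to\infty$ in $e_n\in\overline{O_{r_n}}$ yields $e_*(0,0;1,\cdot)|_\intvl=\Ff$, and lower semicontinuity gives $\rate(e_*)\leq\Engbm(\intvl,\Ff)$. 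Hence $e_*$ minimizes in \eqref{e.t.main}; since $\rate(e_*)<\infty$, the bijection $\Mmu\mapsto\Metric_\Mmu$ from $\Msp_+$ to finite-rate metrics writes $e_*=\Metric_{\Mmu_*}$ for some $\Mmu_*\in\Msp_+$, and Theorem~\ref{t.main}\ref{t.main.inf} forces $e_*(0,0;t,x)=\Fh_*(t,x)$ on $(0,1]\times\R$. This contradicts the inequality $\norm{e_n(0,0;\cdot,\cdot)-\Fh_*}_{\Lsp^\infty(\SA)}\geq r'$, which passes to $e_*$ by uniform convergence on the compact set $\SA\subset(0,1]\times\R$.

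Part~\ref{c.main.measure} runs the same argument with target $\Metric_{\Mmu_*}$ in place of $\Fh_*$ and $\SB$ in place of $\SA$: under the additional hypothesis on $\Ff$, Theorem~\ref{t.main}\ref{t.main.measure} gives a unique minimizing measure $\Mmu_*\in\Msp_+$, which via the bijection promotes to uniqueness of the minimizing metric $\Metric_{\Mmu_*}$ on all of $\Rd$, and this is precisely what makes the contradiction step succeed for any compact $\SB\subset\Rd$. The whole argument is essentially a routine consequence of the LDP coupled with Theorem~\ref{t.main}; no substantive new obstacle arises, since the events in question live on bounded subsets of $\Rd$ and are therefore automatically open/closed in the LDP topology on $\mathcal{E}$.
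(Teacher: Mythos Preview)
Your proof is correct and follows essentially the same approach as the paper: both combine the LDP lower bound on the open set $O_r$ with the LDP upper bound on the closed constraint set, and then invoke goodness of $\rate$ together with the uniqueness content of Theorem~\ref{t.main} to show the numerator rate strictly exceeds $\Engbm(\intvl,\Ff)$. Your version spells out the compactness/contradiction argument that the paper leaves implicit in the line ``using this and the property that $\rate$ is a good rate function, one deduces that $\liminf_{r\to 0}\FF(r) > \rate_{0,0;1,\intvl}(\Ff)$''.
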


%A few remarks on Theorem~\ref{t.main} and Corollary~\ref{c.main} are in place.

Our method is PDE-based and provides information on the limit shape of the directed landscape.
The first half of Theorem~\ref{t.main}\eqref{t.main.inf} gives the upper-tail LDP for the parabolic Airy process.
Thanks to the integrable structure behind it, the parabolic Airy process can be viewed as the top curve of the Airy line ensemble and enjoys the Brownian resampling property \cite{corwin2014brownian}. 
This resampling property offers a heuristic for the rate function \eqref{e.Engbm} (see \cite[Section~10.1]{das24} for example), and one can try proving the LDP using the resampling property and existing methods, such as \cite{ganguly2022sharp,dauvergne2023wiener}. 
Our proof, on the other hand, does not rely on this special property and provides information beyond the LDP.
In particular, Corollary~\ref{c.main} gives the limit shape of the directed landscape when $\Ff$ is piecewise linear or quadratic.

We emphasize that the miniming $\Mmu_*$ in Theorem~\ref{t.main}\eqref{t.main.measure} is generally \emph{not piecewise linear}.
When $\intvl$ consists of finitely many points and when $\Ff_*$ is concave, the miniming $\Mmu_*$ is supported on the graph of finitely many linear paths $\Pgamma$ with constant $\dens_{\Pgamma}$.
This follows from \cite[Proposition~2.1]{das24}; see \cite{tsai2023high,lin23} for a similar result for the KPZ equation.
Roughly speaking, for such special configurations, the infimum in \eqref{e.t.main} can be solved by hand, giving the piecewise linear $\Mmu_*$ just described.
Beyond these special configurations, the infimum in \eqref{e.t.main} cannot be solved by hand (as far as we can tell), the support of $\Mmu_*$ generally consists of nonlinear paths $\Pgamma$, and the density $\dens_{\Pgamma}$ is generally nonlinear.
Example~\ref{ex.M} gives one such instance.

Let us describe our strategy of proving Theorem~\ref{t.main}.
First, proving Theorem~\ref{t.main} amounts to solving the variational problem in \eqref{e.t.main}.
For a suitable $\Mmu$, the function $\partial_{x}\Metric_{\Mmu}(0,0,t,x)$ is a weak solution of (the inviscid) Burgers' equation.
This is pointed out in \cite[Section~1.1]{das24}; see also \cite{bakhtin2013burgers}.
In this paper, we only consider a class of piecewise smooth weak solutions of Burgers' equation, which we call tractable solutions and define in Definition~\ref{d.tractable}.
This class of solutions, together with various approximation arguments, suffices for our proof.
Given what said above, the variational problem in \eqref{e.t.main} can be written in terms tractable solutions.
The key to solving this variational problem is an identity, stated in Proposition~\ref{p.key}, that relates the Kruzhkov entropy productions to $\Engbm$.
This identity gives a strong hint on what the minimizer of \eqref{e.t.main} should look like; see Section~\ref{s.burgers.proof} for a brief discussion on this.
A version of this identity appeared in \cite[Proposition~2.6]{quastel21}, but it was only considered for a special weak solution there.
Here, we observe that the identity generalizes to all tractable solutions and use it to solve the variational problem.
From a broader perspective, our proof is related to the Jensen--Varadhan picture of hydrodynamic large deviations \cite{jensen00,varadhan04}.
A major open problem under this picture is to approximate the Kruzhkov entropy production of general weak solutions of Burgers' equation.
This problem motivates or is related to several works in analysis, including \cite{DeLellis2003,DeLellis2003a,DeLellis2004,Golse2013,Lamy2018}.
Our work, on the other hand, focuses on connecting the directed landscape to Burgers' equation and does not touch on problems like this.

Hereafter, we will implicitly assume that $\Mmu\in\Msp_{+}$ is supported in $[0,1]\times\R$.
Doing so does not lose any generality, because Theorem~\ref{t.main} concerns the deviations of $\landscape_{\eps}(0,0;1,\cdot)$, and because $\Metric_{\Mmu}(0,0;1,\cdot)$ remains unchanged even if we redefine $\Mmu\in\Msp_{+}$ to be zero outside of $[0,1]\times\R$.
Accordingly, unless otherwise noted, \emph{time variables such as $s,t$ will be assumed to be within $[0,1]$ for the rest of the paper.}

\begin{figure}
\begin{minipage}[t]{.48\linewidth}
\fbox{\includegraphics[width=\linewidth]{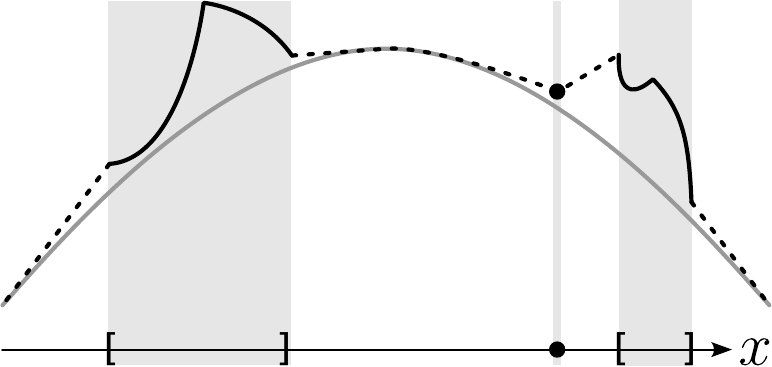}}
\caption{
	The set $\intvl$ is shown at the bottom.
	The gray curve is $-x^2$.
	The solid black curve is $\Ff$.
	The dashed and solid black curves together make $\Ff_*$.
}
\label{f.Ff*}
\end{minipage}
\hfill
\begin{minipage}[t]{.48\linewidth}
\fbox{\includegraphics[width=\linewidth]{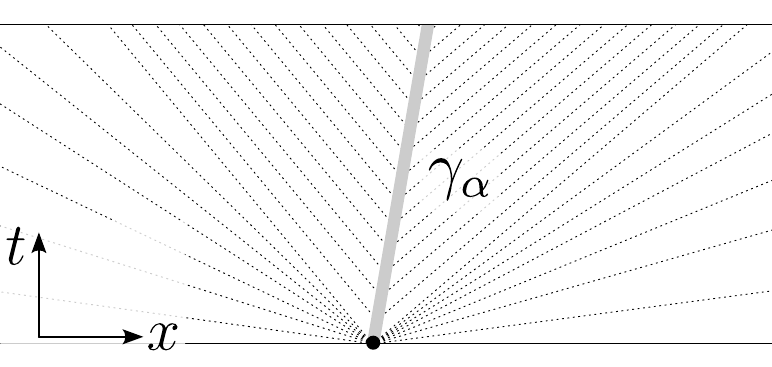}}
\caption{Example~\ref{ex.single}.
	The dot is $(0,0)$.
	The dashed paths are the characteristics of $\partial_{x}\Fh_*$.
	The gray path is $\Pgamma_{\alpha}$, which traces out the support of $\Mmu_{*}$.
}
\label{f.ex.single1}
\end{minipage}
\end{figure}

\subsection{Examples}
\label{s.intro.examples}
Here are a few examples to illustrate Theorem~\ref{t.main}\eqref{t.main.measure}.

\begin{ex}\label{ex.single}
Take $\intvl=\{\alpha\}$ and $\Ff(\alpha)=\beta-\alpha^2$, where $\alpha\in\R$ and $\beta>0$.
Since $\intvl$ consists of just one point, the assumption of Theorem~\ref{t.main}\eqref{t.main.measure} holds. 
One can solve explicitly 
\begin{align}
	\label{e.FA}
	\Ff_*(x) 
	&=
	\left\{\begin{array}{l@{,\ }l}
			- 2(\alpha-\sqrt{\beta}) x + (\alpha-\sqrt{\beta})^2 & \text{when } x\in[\alpha-\sqrt{\beta},\alpha]
			\\
			- 2(\alpha+\sqrt{\beta}) x + (\alpha+\sqrt{\beta})^2 & \text{when } x\in[\alpha,\alpha+\sqrt{\beta}]
			\\
			-x^2 & \text{when } |x-\alpha|>\sqrt{\beta}
	\end{array}\right\}
	=:
	\FA_{\alpha,\beta}(1,x),
\\
	\label{e.FA.}
	\Fh_*(t,x) &= t\FA_{\alpha,\beta}\big(1,\tfrac{x}{t}\big) =: \FA_{\alpha,\beta}(t,x),
\\
	\Mmu_{*} &= \beta\delta_{\Pgamma_{\alpha}},
	\text{ where }\Pgamma_{\alpha}(t):=\alpha t.
\end{align}
\end{ex}

Let us explain a way to visualize $\partial_{x}\Fh_*$ and $\Mmu_{*}$, which will be used in subsequent examples.
As will be seen in Section~\ref{s.tools.hopflax}, the derivative $\partial_x\Fh_*$ is a (non-entropy) solution of Burgers' equation, so it can be described by characteristics.
A characteristic of $\partial_x\Fh_*$ is a path $\Pchi$ such that
\begin{align}
	\label{e.ex.single.chara}
	\dot{\Pchi}(t)=-\tfrac{1}{2}(\partial_x\Fh_*)(t,\Pchi(t)) =  \text{constant}.
\end{align}
For Example~\ref{ex.single}, using \eqref{e.FA}--\eqref{e.FA.}, one sees that the characteristics are as depicted in Figure~\ref{f.ex.single1}.
The places where characteristics intersect are shocks, and by \eqref{e.ex.single.chara}, the function $\partial_x\Fh_*$ is not continuous at shocks.
The precise definition of $\Mmu_* := \M[\Fh_*]$ will be given in \eqref{e.M} in Section~\ref{s.burgers.kruzkov}.
Roughly speaking, $\Mmu_*$ is constructed to be supported on shocks and to have density being $1/16$ of the square of the jump of $\partial_x\Fh_*$:
\begin{align*}
	\Mmu_* := \M[\Fh_*] := \sum_{\Pl:\text{ shocks}} \frac{1}{16} \big( (\partial_{x}\Fh_*)_{x=\Pl(t)^-} - (\partial_{x}\Fh_*)_{x=\Pl(t)^+} \big)^2\, \delta_{\Pl}.
\end{align*}
For Example~\ref{ex.single}, the shocks trace out the path $\Pl=\Pgamma_\alpha$ and $(\partial_{x}\Fh_*)_{x=\Pl(t)^\pm}=-2(\alpha\pm\sqrt{\beta})$.
Hence $\Mmu_*=\frac{1}{16}(4\sqrt{\beta})^2\,\delta_{\Pgamma_{\alpha}}=\beta\,\delta_{\Pgamma_{\alpha}}$.

\begin{ex}\label{ex.M}
Take $\intvl=\{-\alpha,0,\alpha\}$, $\Ff(\pm\alpha):=\beta-\alpha^2$, and $\Ff(0):=\beta'$, for some $\alpha,\beta,\beta'>0$ such that $\beta'<\beta-\alpha^2$.
In this case, $\Ff_*$ takes an M shape as depicted in Figure~\ref{f.ex.M1}.
The characteristics of $\partial_x\Fh_*$ can be obtained by the ``shear-and-cut'' procedure described in Section~\ref{s.tools.hopflax} and the result gives either plot in Figure~\ref{f.ex.M2}, depending on the values of $\alpha,\beta,\beta'$.
The measure $\Mmu_{*}$ is then read off from the shocks of $\partial_{x}\Fh_*$, as explained previously and shown in Figure~\ref{f.ex.M2}.
\end{ex}

\begin{ex}\label{ex.lift}
Take $\intvl=[-\alpha,\alpha]$ and $\Ff(x):=-x^2+\beta$ on $\intvl$, for some $\alpha,\beta>0$.
In words, we seek to lift $\landscape_{\eps}(0,0;1,\cdot)|_{[-\alpha,\alpha]}$ up by $\beta$.
In this case, $\Ff_*$ is as depicted in Figure~\ref{f.ex.lift1}.
The characteristics of $\partial_x\Fh_*$ can be obtained by the ``shear-and-cut'' procedure described in Section~\ref{s.tools.hopflax} and the result gives either plot in Figure~\ref{f.ex.lift2}, depending on the values of $\alpha,\beta$.
The measure $\Mmu_{*}$ is then read off from the shocks of $\partial_{x}\Fh_*$; see Figure~\ref{f.ex.lift2}.
\end{ex}

\begin{figure}
\begin{minipage}[t]{.345\linewidth}
\fbox{\includegraphics[width=\linewidth]{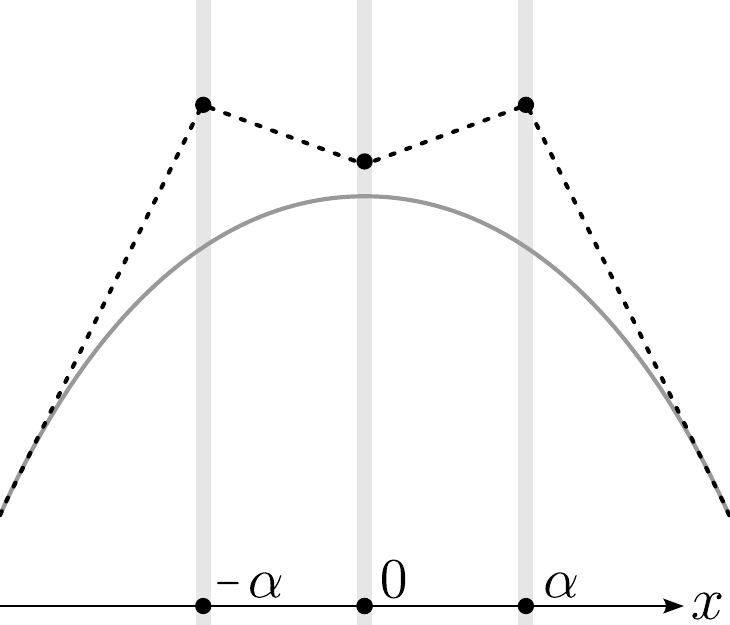}}
\caption{
Example~\ref{ex.M}.
The set $\intvl=\{-\alpha,0,\alpha\}$ is shown at the bottom.
The gray curve is $-x^2$. 
The three dots above the gray curve represent $\Ff$.
The dashed curve is $\Ff_*$.
}
\label{f.ex.M1}
\end{minipage}
\hfill
\begin{minipage}[t]{.63\linewidth}
\fbox{\includegraphics[width=.47\linewidth]{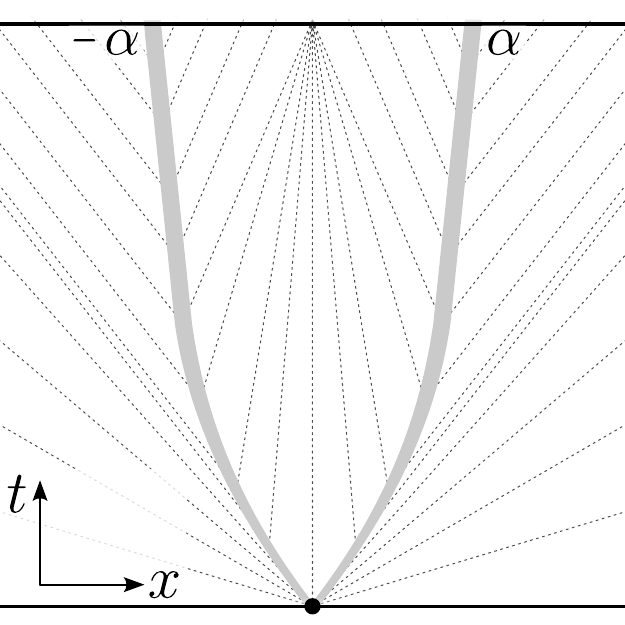}}\hfill\fbox{\includegraphics[width=.47\linewidth]{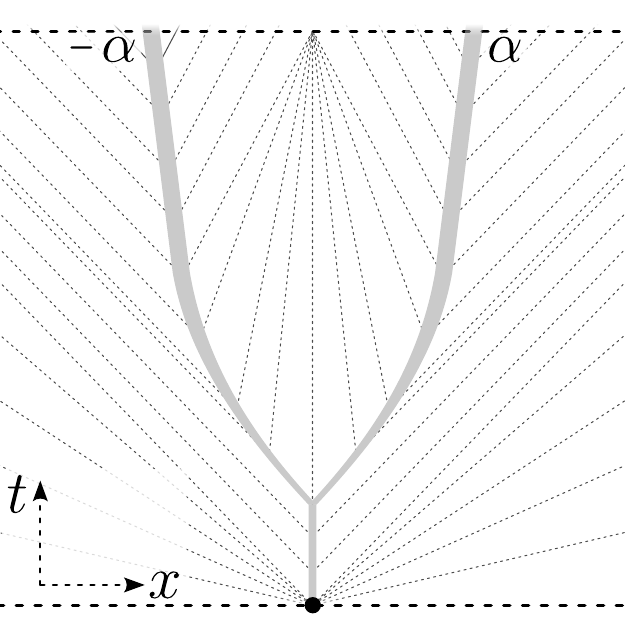}}
\caption{Example~\ref{ex.M}.
Possible configurations of the characteristics and $\Mmu_{*}$.
The dot is $(0,0)$.
The dashed paths are the characteristics of $\partial_{x}\Fh_*$.
The gray paths trace out the support of $\Mmu_{*}$, and the thickness of the paths indicates the size of the density of $\Mmu_{*}$ along those paths.
}
\label{f.ex.M2}
\end{minipage}
\end{figure}

\begin{figure}
\begin{minipage}[t]{.345\linewidth}
\fbox{\includegraphics[width=\linewidth]{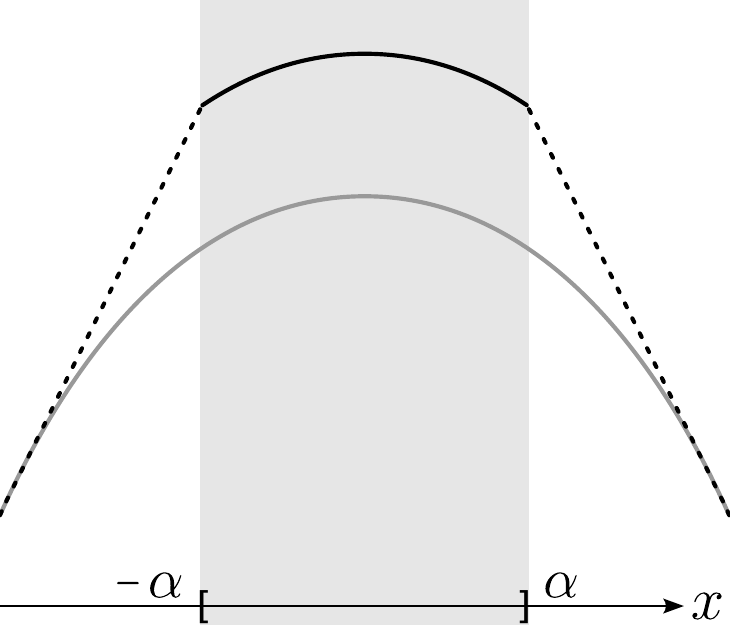}}
\caption{
	Example~\ref{ex.M}.
	The set $\intvl=[-\alpha,\alpha]$ is shown at the bottom.
	The gray curve is $-x^2$. 
	The solid black curve is $\Ff$.
	The dashed and solid black curves together make $\Ff_*$.
}
\label{f.ex.lift1}
\end{minipage}
\hfill
\begin{minipage}[t]{.63\linewidth}
\fbox{\includegraphics[width=.47\linewidth]{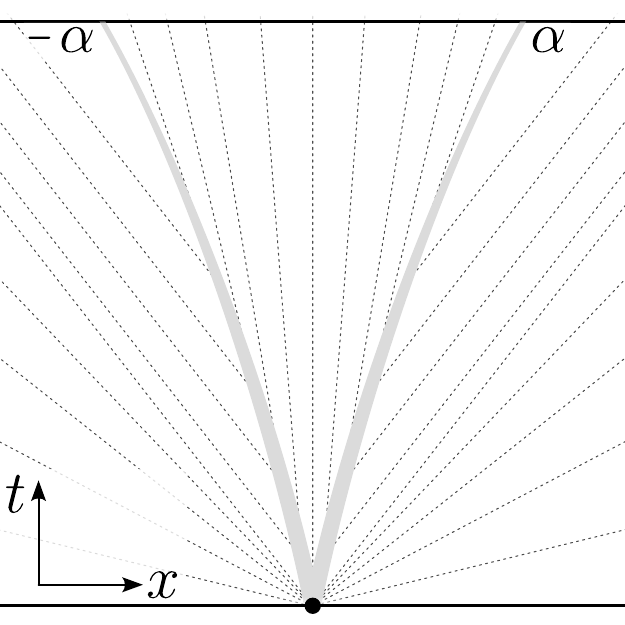}}\hfill\fbox{\includegraphics[width=.47\linewidth]{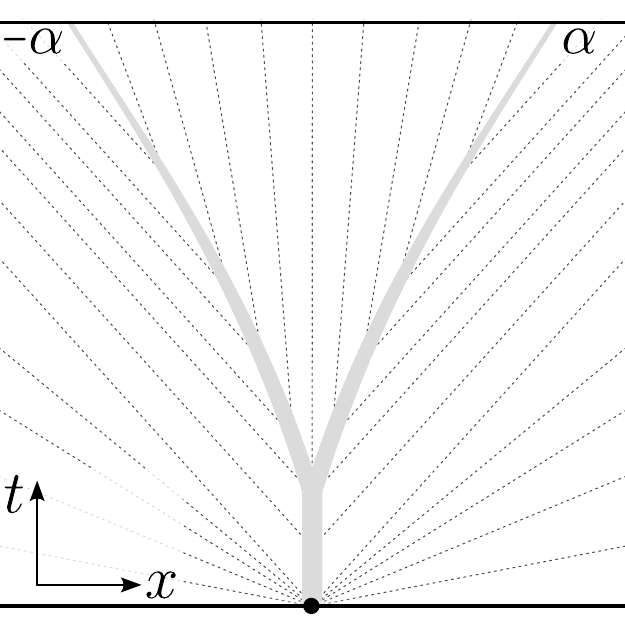}}
\caption{Example~\ref{ex.lift}.
	Possible configurations of the characteristics and $\Mmu_{*}$.
	The dot is $(0,0)$.
	The dashed paths are the characteristics of $\partial_{x}\Fh_*$.
	The gray paths trace out the support of $\Mmu_{*}$, and the thickness of the paths indicates the size of the density of $\Mmu_{*}$ along those paths.
}
\label{f.ex.lift2}
\end{minipage}
\end{figure}

\subsection{Result for multi-wedge initial data}
\label{s.intro.multi}
We next consider the multi-wedge initial data.
Take any nonempty \emph{finite set} $\SZ\subset\R$, a function $\Fg:\SZ\to\R$, and consider
\begin{align}
	\label{e.fixedpt}
	\fixedpt_{\eps}^{\SZ,\Fg}(t,x)
	:=
	\max_{z\in\SZ} \big\{ \landscape_{\eps}(0,z;t,x) + \Fg(z) \big\}.
\end{align}
This is the $\eps$-scaled KPZ fixed point with initial data $\Fg\ind_{\SZ} -\infty\ind_{\R\setminus\SZ}$, which we call multi-wedge.
We will only consider the finite-dimensional LDPs for $\fixedpt_{\eps}^{\SZ,\Fg}(1,\cdot)$.
Fix any nonempty \emph{finite set} $\SY\subset\R$ and $\Ff:\SY\to\R$ such that $\Ff(y)>\max_{z\in\SZ}\{ -(y-z)^2 + \Fg(z) \}$, for all $y\in\SY$.
The LDP from \cite{das24} together with the contraction principle gives
\begin{align}
	&
	\label{e.contraction.multi}
	\limsup_{r\to 0}\limsup_{\eps\to 0}\Big|
	\eps^{3/2} \log \P\big[ \norm{\fixedpt_{\eps}^{\SZ,\Fg}(1,\cdot)-\Ff}_{\lsp^\infty(\SY)}<r \big]
	+
	\rate_{0,\SZ;1,\SY}(\Fg;\Ff)
	\Big|
	=0,
	\\
	&
	\label{e.rate.multi}
	\rate_{0,\SZ;1,\SY}(\Fg;\Ff)
	:=
	\inf\Big\{ \rate(\Metric_{\Mmu}) : \max_{z\in\SZ}\big\{ \Metric_{\Mmu}(0,z;1,y)+\Fg(z) \big\} = \Ff(y),\ \forall y\in\SY \Big\}.
\end{align}

Our result gives a decomposition of \eqref{e.rate.multi}.
Call $(\SY_{z})_{z\in\SZ}$ an \textbf{ordered partition of $\SY$} if $\{\SY_{z}\}_{z\in\SZ}$ is a partition of $\SY$ and $y<y'$ holds for all $(y,y')\in\SY_{z}\times\SY_{z'}$ with $z<z'$.
\begin{prop}\label{p.multi}
The infimum \eqref{e.rate.multi} is achieved by measures of the form: $\Mmu = \sum_{z\in\SZ} \Mmu_{z}$  such that, for some ordered partition $(\SY_{z})_{z\in\SZ}$ of $\SY$,
\begin{enumerate}%[leftmargin=20pt, label=(\alph*)]
	\item \label{p.mult.disjoint}
	the $\Mmu_{z}$s have disjoint supports,
	\item \label{p.mult.=}
	$\Metric_{\Mmu_{z}}(0,z;1,y)+\Fg(z)=\Ff(y)$ for all $y\in\SY_{z}$ and $z\in\SZ$, and
	\item \label{p.mult.<}
	$\Metric_{\Mmu_{z}}(0,z;1,y)+\Fg(z)\leq\Ff(y)$ for all $y\in\SY$ and $z\in \SZ$.
\end{enumerate}
\end{prop}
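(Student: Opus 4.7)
The plan is to start from a minimizer $\Mmu^{\star}$ of \eqref{e.rate.multi} and surgically decompose it according to an ordered partition read off from the argmax structure of its constraints. Existence of $\Mmu^{\star}$ follows from the goodness of $\rate$ together with continuity of $\Mmu\mapsto\bigl(\Metric_{\Mmu}(0,z;1,y)\bigr)_{z\in\SZ,y\in\SY}$ on sublevel sets, which makes the finite max-plus constraint stable along a minimizing sequence. Set $\SZ^{\star}(y):=\{z\in\SZ:\Metric_{\Mmu^{\star}}(0,z;1,y)+\Fg(z)=\Ff(y)\}$, which is nonempty by admissibility, and pick any selection $z(\cdot):\SY\to\SZ$ with $z(y)\in\SZ^{\star}(y)$.

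The claim is that the selection may be chosen non-decreasing. Suppose $y<y'$ but $z(y)>z(y')$, and let $\Pgamma,\Pgamma'$ be geodesics of $\Metric_{\Mmu^{\star}}$ realizing $\Metric_{\Mmu^{\star}}(0,z(y);1,y)$ and $\Metric_{\Mmu^{\star}}(0,z(y');1,y')$. Since $\Pgamma(0)-\Pgamma'(0)>0>\Pgamma(1)-\Pgamma'(1)$, the intermediate value theorem forces a crossing at some $(u,v)\in(0,1)\times\R$. Splicing the two geodesics at $(u,v)$ produces candidate paths $(0,z(y))\to(1,y')$ and $(0,z(y'))\to(1,y)$, and the supremum definition of the metric in \eqref{e.Mmu} yields
\begin{align*}
\Metric_{\Mmu^{\star}}(0,z(y);1,y')+\Metric_{\Mmu^{\star}}(0,z(y');1,y)
\geq \Metric_{\Mmu^{\star}}(0,z(y);1,y)+\Metric_{\Mmu^{\star}}(0,z(y');1,y').
\end{align*}
After adding $\Fg(z(y))+\Fg(z(y'))$ to both sides, the left side is at most $\Ff(y')+\Ff(y)$ by admissibility while the right side equals $\Ff(y)+\Ff(y')$, so all inequalities collapse to equalities. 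Thus $z(y)\in\SZ^{\star}(y')$ and $z(y')\in\SZ^{\star}(y)$, and the swap $z(y)\leftrightarrow z(y')$ preserves validity of the selection. Iterating eliminates all inversions; set $\SY_{z}:=\{y\in\SY:z(y)=z\}$, which is an ordered partition.

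A second round of crossing-and-swap, now applied to the geodesic representatives themselves, produces $\{\Pgamma_{z,y}\}_{z\in\SZ, y\in\SY_z}$ that are pairwise non-crossing; their union for each fixed $z$ forms a \emph{pencil} $P_{z}\subset[0,1]\times\R$ rooted at $(0,z)$, and distinct pencils are separated by monotone space-time curves. Define $\Mmu_{z}$ by cutting each path in the support of $\Mmu^{\star}$ at its transverse intersections with these separators and collecting the pieces lying inside $P_{z}$ with their inherited density. Then $\Mmu:=\sum_z\Mmu_z\leq\Mmu^{\star}$, so $\Metric_{\Mmu_{z}}(0,z;1,y)+\Fg(z)\leq\Ff(y)$ for all $z$ and $y$, giving~\ref{p.mult.<}. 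For $y\in\SY_{z}$ the geodesic $\Pgamma_{z,y}$ lies entirely in $P_{z}$, so its length under $\Metric_{\Mmu_{z}}$ equals its length under $\Metric_{\Mmu^{\star}}$, namely $\Ff(y)-\Fg(z)$, yielding~\ref{p.mult.=}; property~\ref{p.mult.disjoint} is automatic. Because restricting a nonnegative $\Lsp^{3/2}$ density to a subdomain only decreases the integrand in \eqref{e.rate}, $\rate(\Metric_{\Mmu})\leq\rate(\Metric_{\Mmu^{\star}})$, and admissibility of $\Mmu$ together with minimality of $\Mmu^{\star}$ force equality.

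The principal obstacle lies in the measure-level surgery of the last step: one must check that cutting each support path of $\Mmu^{\star}$ against the separating curves yields a countable family of internally disjoint pieces with $\Lsp^{3/2}$-summable densities (so that $\Mmu_z\in\Msp_+$ in the sense of \eqref{e.Msp}), and that the geodesics $\Pgamma_{z,y}$ can simultaneously be arranged non-crossing and contained in their respective pencils. Pathological accumulation of crossings can be circumvented by first approximating $\Mmu^{\star}$ by piecewise-linear configurations in the spirit of Theorem~\ref{t.main}\eqref{t.main.measure} and passing to the limit using the lower-semicontinuity of $\rate$.
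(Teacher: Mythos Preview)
Your crossing-and-swap argument for producing an ordered partition is essentially the same as the paper's. Where you diverge is in the construction of $\Mmu_z$: you attempt to carve $[0,1]\times\R$ into ``pencil regions'' bounded by separating curves and then restrict $\Mmu^\star$ to each region, which you correctly flag as delicate (cutting countably many support paths against separators, checking $\Lsp^{3/2}$-summability, arranging all geodesics simultaneously non-crossing, etc.). The paper avoids all of this with the much simpler choice
\[
\Mmu_z:=\Mmu^\star\big\lfloor_{Q_z},\qquad Q_z:=\bigcup_{y\in\SY_z}\graph(\Pgeo_y),
\]
the restriction to the finite union of the chosen geodesic graphs. Restricting a measure in $\Msp_+$ to such a set amounts to multiplying each density $\dens_\Pgamma$ by an indicator, which trivially preserves membership in $\Msp_+$; no separators, path-cutting, or approximation are needed. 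Since $\graph(\Pgeo_y)\subset Q_{z_y}$, the $\Metric_{\Mmu_{z_y}}$-length of $\Pgeo_y$ equals its $\Metric_{\Mmu^\star}$-length, giving Condition~\eqref{p.mult.=}; Condition~\eqref{p.mult.<} follows from $\Mmu_z\leq\Mmu^\star$; and the non-intersecting property of the geodesics (for distinct $z$'s) gives Condition~\eqref{p.mult.disjoint}. Finally, strict monotonicity \eqref{e.monotone} forces $\Mmu^\star=\sum_{z}\Mmu_z$, so the minimizer itself already has the required form. Your region-based surgery and the suggested piecewise-linear approximation are therefore unnecessary detours; the ``principal obstacle'' you name simply does not arise once one restricts to the geodesic graphs rather than to regions.
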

\noindent%
One could try to prove a multi-wedge analog of \eqref{e.t.main} using Theorem~\ref{t.main} and Proposition~\ref{p.multi}. 
Indeed, Condition~\eqref{p.mult.=} along is equivalent to the setting of Theorem~\ref{t.main} with $\intvl=\SY_{z}$, $\Ff\mapsto\Ff-\Fg(z)$, and with the spatial origin being shifted to $z$.
Taking into account Conditions~\eqref{p.mult.disjoint} and \eqref{p.mult.<} requires further work.
We do not pursue this and just state a conjecture. 
\begin{conj}\label{conj.multi}
Let $\Entsp_z(c):=\{ \Fphi\in\Csp(\R)  :  \Fphi|_{[-c,c]}\in\Hsp^1[-c,c]; \ \Fphi(x)\geq -(x-z)^2+\Fg(z) \text{ on } \R; \ \Fphi(x)=-(x-z)^2+\Fg(z) \text{ outside of } [-c,c] \}$ and let $\Entsp_z := \cup_{c\in(0,\infty)} \Entsp_z(c)$.
The infimum in \eqref{e.rate.multi} is equal to
\begin{align}
	\inf\Big\{ \sum_{z\in\SZ} \frac{1}{4} \int_{\R} \d x \Big( (\partial_x\Fphi_z)^2 - 4(x-z)^2 \Big) :
	(\Fphi_z)_{z\in\SZ}\in\prod_{z\in\SZ}\Entsp_{z}; \
	\Big(\max_{z\in\SZ} \Fphi_z\Big)\Big|_{\SY} = \Ff
	\Big\}.
\end{align}
\end{conj}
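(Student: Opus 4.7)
The plan is to combine Proposition~\ref{p.multi} with Theorem~\ref{t.main} to establish both inequalities in the claimed equality. Proposition~\ref{p.multi} reduces the multi-wedge problem to a disjoint collection of single-wedge subproblems (one per $z\in\SZ$), and Theorem~\ref{t.main} converts each such subproblem into the $\Engbm$-type variational problem over $\Entsp_{z}$ that appears on the conjecture's right-hand side.

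\textbf{Lower bound ($\ge$).} Take a minimizer $\Mmu^{\circ}$ of \eqref{e.rate.multi}; by Proposition~\ref{p.multi}, write $\Mmu^{\circ}=\sum_{z}\Mmu_{z}^{\circ}$ with disjoint supports and an ordered partition $(\SY_{z})$ of $\SY$. Define $\Fphi_{z}(y):=\Metric_{\Mmu_{z}^{\circ}}(0,z;1,y)+\Fg(z)$. The triangle inequality against $\diri(0,z;1,y)=-(y-z)^{2}$, combined with a truncation argument giving $\Mmu_{z}^{\circ}$ bounded space-time support, places $\Fphi_{z}\in\Entsp_{z}$. Conditions \eqref{p.mult.=} and \eqref{p.mult.<} then yield $\Fphi_{z}|_{\SY_{z}}=\Ff|_{\SY_{z}}$ and $\Fphi_{z}\le\Ff$ on $\SY$, whence $\max_{z}\Fphi_{z}|_{\SY}=\Ff$. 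Next, shift $z\to 0$ and apply Theorem~\ref{t.main}\eqref{t.main.inf} to $\Mmu_{z}^{\circ}$ with $\intvl$ any bounded interval containing the set $\{x:\Fphi_{z}(x)\ne -(x-z)^{2}+\Fg(z)\}$: the unique $\Entsp$-minimizer is $\Fphi_{z}-\Fg(z)$ itself, giving
\begin{align*}
\rate(\Metric_{\Mmu_{z}^{\circ}})\ \ge\ \Engbm\bigl(\intvl,(\Fphi_{z}-\Fg(z))|_{\intvl}\bigr)\ =\ \tfrac{1}{4}\int_{\R}\bigl((\partial_{x}\Fphi_{z})^{2}-4(x-z)^{2}\bigr)\,\d x.
\end{align*}
Summing over $z$---using additivity of $\rate$ on disjoint-support measures---and recognizing $(\Fphi_{z})$ as a feasible configuration for the conjecture's RHS establishes the inequality.

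\textbf{Upper bound ($\le$) and the main obstacle.} Conversely, given $(\Fphi_{z})$ feasible for the RHS, Theorem~\ref{t.main} (after shifting $z\to 0$, using Part~\eqref{t.main.measure} in the piecewise linear/quadratic case and an approximation argument in general) supplies measures $\Mmu_{z}$ with $\Metric_{\Mmu_{z}}(0,z;1,\cdot)=\Fphi_{z}-\Fg(z)$ and rate $\tfrac{1}{4}\int((\partial_{x}\Fphi_{z})^{2}-4(x-z)^{2})\,\d x$. Setting $\Mmu:=\sum_{z}\Mmu_{z}$, the lower bound $\max_{z}\{\Metric_{\Mmu}(0,z;1,y)+\Fg(z)\}\ge\Ff(y)$ is immediate from $\Metric_{\Mmu}\ge\Metric_{\Mmu_{z}}$ and the RHS max condition. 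The substantive difficulty is the reverse inequality together with $\rate(\Metric_{\Mmu})\le\sum_{z}\rate(\Metric_{\Mmu_{z}})$: both require the $\Mmu_{z}$'s to be \emph{non-interfering}, in the sense that no path from $(0,z)$ to $(1,y)$ accumulates extra mass from $\Mmu_{z'}$ with $z'\ne z$. My plan is to restrict attention to minimizers of the conjecture's RHS and exploit the description $\Mmu_{z}=\M[\Fh_{*,z}]$ from Theorem~\ref{t.main}\eqref{t.main.measure}, whose support lies on the shocks of the backward Hopf--Lax flow $\HLbk[\Fphi_{z}-\Fg(z)]$. At an RHS minimizer, one expects the ordered-partition structure inherited from Proposition~\ref{p.multi} to force the characteristics emanating from distinct $(0,z)$ to sweep out disjoint cones, so that the shock sets of different $z$'s occupy disjoint space-time regions. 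Formalizing this separation---via a comparison principle for $\HLbk$ and a case analysis of the ``shear-and-cut'' picture, first in the piecewise linear/quadratic regime of Theorem~\ref{t.main}\eqref{t.main.measure} and then by density---is the main obstacle, especially in ruling out boundary configurations where shock curves from different $z$'s coincide or where a characteristic of one wedge enters the shock region of another.
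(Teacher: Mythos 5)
The statement you are trying to prove is an open \emph{conjecture} in the paper (Conjecture~\ref{conj.multi}); the authors do not prove it. Immediately after Proposition~\ref{p.multi} they write that one ``could try to prove a multi-wedge analog of \eqref{e.t.main} using Theorem~\ref{t.main} and Proposition~\ref{p.multi} \ldots Taking into account Conditions~\eqref{p.mult.disjoint} and \eqref{p.mult.<} requires further work. We do not pursue this and just state a conjecture.'' So there is no proof in the paper for you to match against; what you have produced is a proposal for attacking precisely the question the authors left open.

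Your lower bound ($\ge$) is on solid ground and is in fact a little sharper than the route the paper hints at: rather than taking $\intvl=\SY_z$, you take $\intvl$ to contain the full excess support of $\Fphi_z:=\Metric_{\Mmu_z^\circ}(0,z;1,\cdot)+\Fg(z)$, so that the $\Engbm$-minimizer is $\Fphi_z-\Fg(z)$ itself and no slack is lost. Additivity of the rate across disjointly supported $\Mmu_z^\circ$ (from Proposition~\ref{p.multi}\eqref{p.mult.disjoint} and formula~\eqref{e.rate}) then does the rest. Two technical points you should nail down: that $\Fphi_z\in\Entsp_z$ (finiteness of the rate plus Theorem~\ref{t.main}\eqref{t.main.inf} does force $\Hsp^1$ regularity, but this deserves a line), and that a minimizer of \eqref{e.rate.multi} actually exists so that Proposition~\ref{p.multi} is applicable (goodness of $\rate$ handles this, but say so).

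The genuine gap is exactly the one you flag in the upper bound ($\le$). Given $(\Fphi_z)_z$ feasible for the right-hand side and corresponding single-wedge minimizers $\Mmu_z$, the measure $\Mmu=\sum_z\Mmu_z$ gives $\max_z\{\Metric_{\Mmu}(0,z;1,y)+\Fg(z)\}\ge\Ff(y)$, but nothing prevents strict inequality (mass of $\Mmu_{z'}$ inflating paths from $(0,z)$), nor is $\rate(\Metric_\Mmu)$ controlled by $\sum_z\rate(\Metric_{\Mmu_z})$ unless the supports are disjoint. Your proposed remedy --- restrict to RHS-minimizers, use the shock structure of $\M[\Fh_{*,z}]$ from Theorem~\ref{t.main}\eqref{t.main.measure}, and argue that the backward Hopf--Lax cones from distinct $z$ sweep out disjoint space-time regions --- is plausible and is essentially the ``further work'' the paper defers, but it is not carried out here. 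In particular, the case analysis ruling out coinciding or interpenetrating shock curves from distinct wedges, and the passage from the piecewise linear/quadratic regime to general $\Ff$, are both missing. As it stands, your write-up is a credible reduction and half of a proof, not a proof.
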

\noindent%
The analog of Conjecture~\ref{conj.multi} for TASEP is proven in \cite[Section~2.3]{quastel21} with the aid of determinantal formulas.

\subsection{Literature}
\label{s.intro.literature}
For the directed landscape and KPZ fixed point, the one-point tail bounds follow from Tracy--Widom's original work \cite{tracy94}; see also \cite{ramirez2011beta} for a probabilistic derivation. 
The metric-level LDP in \cite{das24} was established using only one-point tail bounds and basic properties of the directed landscape.
At a finer level, the conditional limiting fluctuations and geodesics were studied in \cite{liu2022geodesic,liu2024conditional} using exact formulas from \cite{liu2022one}, and in \cite{ganguly2022sharp,ganguly2023brownian} (also for the KPZ equation) using geometric arguments relying on the Brownian resampling property from \cite{corwin2014brownian,corwin2016kpz}.

%For various prelimiting models of the directed landscape, such as the first and the last passage percolation, large deviation behavior has been studied by many authors. 

For the totally asymmetric simple exclusion process, or equivalently the exponential last passage percolation, the one-point LDPs were obtained in \cite{ls77,sep98a,sep98b,dz99,joh00}, the process-level upper-tail LDP was studied in \cite{jensen00,varadhan04} and proven in \cite{quastel21}, and the process-level lower-tail LDP was studied in \cite{ot19}.
The lower-tail LDP for the first passage percolation was established in \cite{bgs17} at the one-point level and recently in \cite{verges2024large} at the metric level.
For the KPZ equation, one-point tails were studied in the physics works \cite{ledoussal16long,ledoussal16short,krajenbrink17short,sasorov17,corwin18,krajenbrink18half,krajenbrink18simple,krajenbrink18systematic}
and in the mathematics works \cite{das21,kim21,corwin20lower,corwin20general,ghosal20,lin21half,cafasso22,tsai22exact},
and process-level large deviations were studied in the physics works
\cite{kolokolov07,kolokolov09,meerson16,meerson17,meerson18,kamenev16,hartmann19,krajenbrink21,krajenbrink22flat,krajenbrink23}
and the mathematics works \cite{lin21,gaudreaulamarre2023kpz,lin22,ganguly2022sharp,ganguly2023brownian,lin23,tsai2023high}.
For several other models in the KPZ universality class, the one-point LDPs have been established in \cite{georgiou13,janjigian15,emrah2017,janjigian19,das2022upper,das2023large}.

%Besides the above zero-temperature models, there has been a plethora of recent works devoted to obtaining tail estimates and large deviations for the KPZ equation \cite{kardar86}, a positive temperature model in the KPZ universality class. This includes ...

\subsection*{Outline}
In Section~\ref{s.burgers}, we describe the connection between the large deviations of the directed landscape and the weak solutions of Burgers' equation. 
In Section \ref{s.tools}, we prepare various tools and intermediate results necessary for the proof.
Finally, in Section~\ref{s.pfmain}, we prove Theorem \ref{t.main}, Corollary \ref{c.main}, and Proposition \ref{p.multi}.

%\subsection*{Acknowledgments}

\subsection*{Funding}
The research of Tsai was partially supported by the NSF through DMS-2243112 and the Sloan Foundation Fellowship.

\section{Burgers' equation and Kruzhkov entropy production}
\label{s.burgers}
Here we explain the relation between the large deviations of the directed landscape, the weak solutions of Burgers' equation, and the Kruzhkov entropy production.

\subsection{Weak solutions of Burgers' equation}
\label{s.burgers.burgers}
Let us briefly recall some general properties of Burgers' equation.
We refer to to \cite[Chapters~3 and 11]{evans22} for an introduction to this topic.
Consider (the invicid) Burgers' equation and its Hamilton--Jacobi equation:
\begin{align}
	\label{e.burgers}
	\partial_t \Fu = \tfrac{1}{4} \partial_x (\Fu^2),\qquad \partial_t \Fh = \tfrac{1}{4} (\partial_x \Fh)^2,
	\qquad
	(t,x)\in(0,1)\times\R,
\end{align}
which are related through $\Fu=\partial_x\Fh$.
We view $\Fh$ as the height function.
Call $\Fu\in\Lsp_\mathrm{loc}^\infty((0,1)\times\R)$ a weak solution of Burgers' equation if the first equation in \eqref{e.burgers} holds weakly on $(0,1)\times\R$.
No matter how smooth we require the initial data $\Fu(0,\cdot)$ to be, the solutions generally develop discontinuity later in time, and there generally exist multiple weak solutions.
Uniqueness is obtained only after imposing a so-call entropy condition.
For a given initial data in a suitable class, there exists exactly one weak solution satisfying the entropy condition, called the entropy solution.
While in the PDE literature, non-entropy solutions are often regarded as non-physical, in the study of the large deviations the directed landscape, non-entropy solutions are \emph{highly relevant}.
The relevance of non-entropy solutions in this context is observed as early as in \cite{jensen00} and \cite{varadhan04}.

In this paper, we will only work with what we call tractable solutions.
As said, we will need to consider \emph{non-entropy} solutions.
To introduce tractable solutions, recall the Dirichlet metric $\diri$ from \eqref{e.diri} and consider
\begin{align}
	\label{e.Fhh.Fuu}
	\Fhh(t,x) := \diri(0,0;t,x) = - \tfrac{x^2}{t},
	\quad
	\Fuu(t,x) := \partial_x\Fhh(t,x) = -\tfrac{2x}{t},
	\quad
	(t,x)\in(0,1]\times\R.
\end{align}
These functions are smooth solutions of \eqref{e.burgers}.
They do become singular as $t\to 0$, but we will confine our attentions to a light cone 
\begin{align}
	\label{e.cone}
	\cone(c) :=\{(t,x):t\in[0,1],|x| \leq ct\},
\end{align}
for $c<\infty$, on which $|\Fhh|$ and $|\Fuu|$ stay bounded.
\begin{defn}\label{d.tractable}
A \textbf{tractable partition of $\cone(c)$} consists of $(\Stime,\Sbdy,\Sdom)$.
The set $\Stime=\{0=t_0<\ldots<t_{|\Stime|}=1\}$ is a partition of $[0,1]$.
The set $\Sbdy$ consists of finitely many $\Csp^1[t_{i-1},t_{i}]$ paths, $i=1,\ldots,|\Stime|$, and these paths do not intersect except at $t\in\Stime$.
We assume $\Sbdy$ contains the left and right boundaries of $\cone(c)\cap([t_{i-1},t_{i}]\times\R)$,  $i=1,\ldots,|\Stime|$.
The light cone $\cone(c)$ is partitioned by $\Stime\times\R$ and the paths in $\Sbdy$ into finitely many open connected regions.
Let $\Sdom$ denote the set of these open connected regions.
See Figure~\ref{f.tractable} for an illustration.

Call $\Fv$ with $\Fv:=\partial_x\Fg$ a \textbf{tractable solution} with respect to a tractable partition $(\Stime(\Fv),\Sbdy(\Fv),\Sdom(\Fv))$ of a light cone $\cone(c)$ if $\Fg\in\Csp((0,1]\times\R)$ solves $\partial_t\Fg=\frac{1}{4}(\partial_{x}\Fg)^2$ a.e.\ on $(0,1)\times\R$ and
\begin{enumerate}%[leftmargin=20pt,label=(\alph*)]
	\item \label{d.tractable.cone}
	$\Fg=\Fhh$ outside of $\cone(c)$ and $\Fg\in\Csp(\cone(c))$ with the convention $\Fg(0,0):=0$,
	\item \label{d.tractable.domain}
	for each $\SD\in\Sdom(\Fv)$, we have $\Fg\in\Csp^2(\SD)$ and $\Fv\in\Lsp^\infty(\SD)$, and
	\item \label{d.tractable.bdy}
	for each $\Pl\in\Sbdy(\Fv)$, the limits $\Fv_{\Pl\pm}(t):=\lim_{x\to\Pl(t)^\pm}\Fv(t,x)$ exist for $t\in(\start(\Pl),\ed(\Pl))$ and are $\Csp(\start(\Pl),\ed(\Pl))\cap \Lsp^\infty(\start(\Pl),\ed(\Pl))$ functions.
\end{enumerate}
Call any $\Pl\in\Sbdy(\Fv)$ a \textbf{boundary path of $\Fv$} and any $\SD\in\Sdom(\Fv)$ a \textbf{smooth region of $\Fv$}.
\end{defn}

\begin{figure}
\fbox{\includegraphics[width=.5\linewidth]{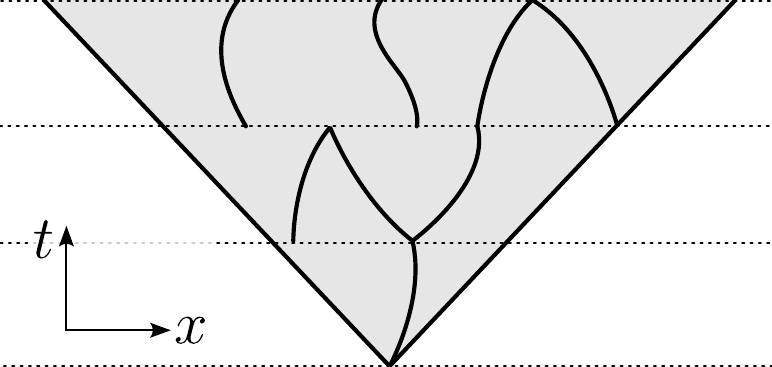}}
\caption{Tractable partition of a light cone.
	The dashed lines are $\Stime\times\R$.
	The solid paths are $\Pl\in\Sbdy$.
}
\label{f.tractable}
\end{figure}

Let us recall some basic properties of weak solutions of Burgers' equation, restricting the scope to a tractable solution $\Fv=\partial_x\Fg$.
Recall that a \textbf{characteristic} of $\Fv$ is a linear path $\Pchi$ such that
\begin{align}
	\label{e.characteristics}
	\dot{\Pchi}=-\tfrac{1}{2}\Fv|_{\Pchi} =  \text{constant}.
\end{align}
Hereafter, $\Fv|_{\Pchi}(t):=\Fv(t,\Pchi(t))$.
For any $\SD\in\Sdom(\Fv)$, since $\Fv$ is a $\Csp^1(\SD)$ solution, the region $\SD$ is filled with non-intersecting characteristics.
%, and $\Fg$ is given by the Hopf--Lax formula
%\begin{align}
%	\label{e.hopflax.domain}
%	\Fg(t,x)
%	=
%	\max\big\{ |\chi|_\diri + \Fg(s,y) : (s,y)\xrightarrow{\chi}(t,x), \ s< t, \ \graph(\chi)\subset\bar{\SD}, \ (s,y)\in\partial\SD \big\},
%\end{align}
%for all $(t,x)\in\SD$, where $\partial\SD$ denotes the boundary of $\SD$, $\bar{\SD}$ denotes the closure of $\SD$, and the maximizers in \eqref{e.hopflax.domain} are characteristics.
Next, recall that the places where $\Fv$ is discontinuous are called \textbf{shocks}, which can only occur on the boundary paths $\Pl\in\Sbdy(\Fv)$.
For the discussions of shocks, we exclude the starting and ending times $t=\start(\Pl),\ed(\Pl)$ when the limits $\Fv_{\Pl\pm}$ may not exist.
An \textbf{entropy shock} and \textbf{non-entropy shock} are respectively where $\Fv_{\Pl-}<\Fv_{\Pl+}$ and $\Fv_{\Pl-}>\Fv_{\Pl+}$.
We call $\Fv$ an entropy solution if it does not have non-entropy shocks.
%At each shock, the Rankine--Hugoniot condition holds:
%\begin{align}
%	\label{e.rankine.hugoniot}
%	\dot{\Pl} = - \tfrac{1}{4} ( \Fv_{\Pl+} + \Fv_{\Pl-} ),
%	\qquad
%	\text{whenever } \Fv_{\Pl-}\neq\Fv_{\Pl+}.
%\end{align}
%See \cite[Chapter~3.4]{evans22} for example.
%The Rankine--Hugoniot condition can also be understood from the continuity of $\Fg$.
Next, we derive some useful identities.
Since $\Fg\in\Csp((0,1]\times\R)$ by assumption, 
\begin{align}
	\label{e.Fg.conti}
	\lim_{x\to\Pl(l)^-} \Fg(t,x) = \Fg(t,\Pl(t)) = \lim_{x\to\Pl(l)^+} \Fg(t,x),
	\qquad
	\text{for all } t\in(\start(\Pl),\ed(\Pl)).
\end{align}
Take $t_1<t_2\in(\start(\Pl),\ed(\Pl))$, use \eqref{e.Fg.conti} to approximate $\Fg(t_i,\Pl(t_i))$ by $\Fg(t_i,\Pl(t_i)\pm\eps)$, use the $\Csp^1$ smoothness of the latter, and send $\eps\to 0^+$.
Doing so gives
$
\Fg(t_2,\Pl(t_2)) - \Fg(t_1,\Pl(t_1))
=
\int_{t_1}^{t_2} \d t \, ( \partial_{t} \Fg + \dot{\Pl}\partial_x \Fg  )|_{\Pl\pm}
=
\int_{t_1}^{t_2} \d t \, ( \frac{1}{4}\Fv^2 + \dot{\Pl}\Fv )|_{\Pl\pm}	 
$.
Hence $\frac{\d~}{\d t}\Fg(t,\Pl(t))$ exists on $(\start(\Pl),\ed(\Pl))$ and 
\begin{align}
	\label{e.Fg.dot.}
	\dot{\Fg|_{\Pl}}
	:=
	\tfrac{\d~}{\d t} \Fg(t,\Pl(t))
	=
	\tfrac{1}{4} \Fv_{\Pl+}^2 + \Fv_{\Pl+}  \dot{\Pl}
	=
	\tfrac{1}{4} \Fv_{\Pl-}^2 + \Fv_{\Pl-} \dot{\Pl}.
\end{align}
From this, we further deduce
\begin{align}
	\label{e.rankine.hugoniot}
	&\dot{\Pl} = - \tfrac{1}{4} ( \Fv_{\Pl+} + \Fv_{\Pl-} ),
	\qquad
	\text{whenever } \Fv_{\Pl-}\neq\Fv_{\Pl+},
	\\
	\label{e.Fg.dot..}
	&( \dot{\Fg|_{\Pl}} + \dot{\ell}^2 ) \, \ind_{\Fv_{\Pl-}\neq\Fv_{\Pl+}} 
	= 
	\tfrac{1}{16} ( \Fv_{\Pl-} - \Fv_{\Pl+} )^2,
\end{align}
where \eqref{e.rankine.hugoniot} is the well-known Rankine--Hugoniot condition.

Next, to see the connection between Burgers' equation and the large deviations, consider
\begin{align}
	\label{e.Hfn.Ufn}
	\Hfn[\Mmu](t,x) := \Metric_{\Mmu}(0,0;t,x), %\qquad \Ufn[\Mmu]:=\partial_x \Hfn[\Mmu], 
	\qquad (t,x)\in(0,1]\times\R.
\end{align}
Indeed, the variational problem \eqref{e.t.main} concerns $\Metric_{\Mmu}(0,0;1,x)=\Hfn[\Mmu](1,x)$, and we are allowing $t$ to vary in $(0,1]$ to get a height function in $(t,x)$.
Under this notation, $\Fhh$ in \eqref{e.Fhh.Fuu} is $\Hfn[0]$.
For sufficiently ``nice'' $\Mmu$, the function $\partial_{x}\Hfn[\Mmu]$ is a tractable solution.

\begin{defn}\label{d.pwlinear.measure}
	Call $ \Mmu=\sum_{\Pgamma\in\Snet} \dens_{\Pgamma}\,\delta_{\Pgamma}\in\Msp_+$ \textbf{piecewise linear and cone-supported} if $\Snet$ consists of finitely many linear (meaning $\dot{\Pgamma}$ is constant) internally disjoint paths, each $\dens_{\Pgamma}$ is constant, and $\supp(\Mmu)\subset\cone(c)$ for some $c<\infty$.
\end{defn}

\begin{prop}\label{p.pwlinear.}
	For any piecewise linear and cone-supported $\Mmu$, $\partial_{x}\Hfn[\Mmu]$ is a tractable solution.
\end{prop}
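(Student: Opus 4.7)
The plan is to exploit the finite combinatorial structure of piecewise linear, cone-supported measures. Since $\rate(\Metric_{\Mmu}) = \sum_{\Pgamma \in \Snet} \tfrac{4}{3}\dens_\Pgamma^{3/2}(\ed(\Pgamma)-\start(\Pgamma)) < \infty$, the bijection recalled before \eqref{e.rate} gives $\Metric_{\Mmu} \in \mathcal{E}$, so $\Hfn[\Mmu]$ is continuous on $(0,1]\times\R$. In the variational formula $\Hfn[\Mmu](t,x) = \sup\{\mu(\graph(\Peta)) + |\Peta|_{\diri}\}$, a path $\Peta$ can pick up $\mu$-mass only on subintervals where its graph coincides with some $\Pgamma \in \Snet$, while on any other subinterval the Dirichlet term $-\int \dot\eta^2$ is strictly maximized under fixed endpoints by the straight line (Cauchy--Schwarz). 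Hence every maximizer can be taken piecewise linear, with each linear piece either free or coinciding with a portion of some $\Pgamma$, and with its breakpoints at endpoints of paths in $\Snet$ or at entry/exit points onto a $\Pgamma$. The ordered sequence of $\Pgamma$'s visited defines a \emph{type} $\sigma$ of $\Peta$, and since $|\Snet|<\infty$, the set $\mathcal{S}$ of admissible types is finite.

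For each $\sigma\in\mathcal{S}$, the maximum of $\mu(\graph(\Peta)) + |\Peta|_{\diri}$ over $\Peta$ of type $\sigma$ ending at $(t,x)$ is obtained by maximizing a strictly concave quadratic in the remaining continuous parameters (the entry/exit times along each $\Pgamma$ used); denote this value by $\Hfn_\sigma(t,x)$. The envelope theorem and the implicit function theorem give $\Hfn_\sigma \in \Csp^\infty$ on the open set where the optimal parameters lie in the interior of their feasible region, and $\Hfn[\Mmu] = \max_{\sigma\in\mathcal{S}} \Hfn_\sigma$. I would then construct the tractable partition as follows: take $\Stime := \{0,1\} \cup \{\start(\Pgamma), \ed(\Pgamma) : \Pgamma \in \Snet\}$; pick $c'\geq c$ large enough that $\Hfn[\Mmu] = \Fhh$ outside $\cone(c')$, which follows from a direct comparison showing that for $|x/t|$ large enough the Dirichlet cost of any detour reaching $\supp(\Mmu)\subset\cone(c)$ exceeds the total $\mu$-mass $M := \sum_{\Pgamma} \dens_\Pgamma(\ed(\Pgamma)-\start(\Pgamma))$; and let $\Sbdy$ consist of the left/right boundaries of $\cone(c')\cap([t_{i-1},t_i]\times\R)$, the $\Pgamma$'s restricted to each slab, and the \emph{shock curves} on which the optimal type $\sigma$ changes. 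Each shock curve solves an indifference equation $\Hfn_{\sigma_1}(t,x) = \Hfn_{\sigma_2}(t,x)$; the implicit function theorem, applied using that $\partial_x \Hfn_{\sigma_1}\neq \partial_x \Hfn_{\sigma_2}$ at the shock (else two adjacent types would be indistinguishable in their first-order data), gives $\Csp^1$ regularity on each slab.

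It remains to verify the items of Definition~\ref{d.tractable}. Continuity of $\Hfn[\Mmu]$ on $\cone(c')$ up to $(0,0)$ with value $0$ follows from $|\Hfn[\Mmu](t,x)| \leq c'^2 t + Mt \to 0$ as $t\to 0^+$ within the cone. On each smooth region $\SD\in\Sdom$ the optimal type is locally constant, so $\Hfn[\Mmu]=\Hfn_\sigma \in \Csp^2(\SD)$ and $\partial_x \Hfn[\Mmu]\in\Lsp^\infty(\SD)$ from the explicit closed-form expression for $\Hfn_\sigma$. Moreover, at an interior point of $\SD$ the final segment of the optimal path is free (since each $\graph(\Pgamma)$ is a one-dimensional subset of the plane and already lies in $\Sbdy$), so $\Hfn_\sigma(t,x) = C - (x-y_0)^2/(t-t_0)$ with $(t_0,y_0)$ determined by the optimization; the envelope theorem then yields $\partial_t \Hfn_\sigma = \tfrac{1}{4}(\partial_x \Hfn_\sigma)^2$, giving the Hamilton--Jacobi equation a.e.\ on $(0,1)\times\R$. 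On each $\Pl\in\Sbdy$ the one-sided limits $\Fv_{\Pl\pm}$ exist and are $\Csp\cap\Lsp^\infty$ by inheritance from the smooth $\Hfn_\sigma$ on the two adjacent regions. The main obstacle is the $\Csp^1$ regularity of the shock curves at isolated degenerate times where three or more types are simultaneously optimal or the Jacobian in the implicit function theorem degenerates; I would handle these by enlarging $\Stime$ to include all such times, which are finite in number because the finitely many indifference equations indexed by $\mathcal{S}\times\mathcal{S}$ are polynomial in $(t,x)$.
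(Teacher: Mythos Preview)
Your strategy---decompose $\Hfn[\Mmu]$ as a finite maximum $\max_{\sigma\in\mathcal S}\Hfn_\sigma$ over combinatorial ``types'' of optimal paths and read off the tractable partition from the argmax regions---is a genuinely different route from the paper's. The paper (Lemmas~\ref{l.pwlinear1}--\ref{l.pwlinear}) proceeds by induction over time slabs $[\tau_{i-1},\tau_i]$: it \emph{guesses} an explicit candidate $\Fg=\fPhi\vee\max_{\Pgamma}\fPsi_\Pgamma$ built from the forward Hopf--Lax evolution $\fPhi$ and carefully designed activation functions $\fPsi_\Pgamma$ (see~\eqref{e.fPsi}), checks directly that $\partial_x\Fg$ is piecewise linear, and then invokes the matching criterion Lemma~\ref{l.match} to prove $\Fg=\Hfn[\Mmu]$. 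This constructive route also delivers the structural Property~\eqref{l.pwlinear.shock} on the location and size of non-entropy shocks, which is what Proposition~\ref{p.pwlinear} (and hence the proof of Theorem~\ref{t.main}) actually uses downstream.

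The main gap in your argument is the assertion that $|\mathcal S|<\infty$ simply because $|\Snet|<\infty$. A type is an ordered \emph{sequence} of $\Pgamma$'s visited, and nothing you wrote rules out arbitrarily long sequences such as $\Pgamma_1,\Pgamma_2,\Pgamma_1,\Pgamma_2,\dots$: within one slab the paths in $\Snet$ are non-crossing, but an admissible piecewise-linear $\Peta$ can still alternate between two of them via free linear segments. Proving that such zig-zags are never optimal (hence bounding the sequence length) is exactly the nontrivial content that the paper's matching-lemma argument handles implicitly by exhibiting the answer and verifying it. A second issue is the endgame: you claim the degenerate times are finite because the indifference equations are ``polynomial in $(t,x)$'', but the $\Hfn_\sigma$ are only \emph{piecewise} given by closed forms (constraints in the inner maximization activate and deactivate), and the resulting boundary curves take the form $\Pl(t)=p_1(t)\pm\sqrt{p_2(t)}$ as in~\eqref{e.pwlinear.Pl}; controlling the number of triple points and tangencies then requires a genuine real-algebraic argument rather than a bare appeal to polynomiality.
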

\noindent{}%
A stronger version of Proposition~\ref{p.pwlinear.}, Proposition~\ref{p.pwlinear}, will be proven in Section~\ref{s.tools.pwlinear}.

\begin{rmk}
Proving Theorem~\ref{t.main} requires considering more general $\Mmu$s than those in Definition~\ref{d.pwlinear.measure}.
We achieve this by approximating the former by the latter.
It is of independent interest to understand how general $\Mmu$ can be for $\partial_{x}\Hfn[\Mmu]$ to still be a weak solution and to understand properties of $\partial_{x}\Hfn[\Mmu]$.
\end{rmk}

Here are some notation for the next example.
Given paths $\Peta$ and $\Peta'$ with $\ed(\Peta)=\start(\Peta')$ and $\Peta(\ed(\Peta))=\Peta'(\start(\Peta'))$, we write $\Peta\cup\Peta':=\Peta\ind_{[\start(\Peta),\ed(\Peta)]}+\Peta'\ind_{(\start(\Peta'),\ed(\Peta')]}$ for the concatenated paths.
Call a path $\Peta$ an \textbf{$\Metric_{\Mmu}$ geodesic} if $|\Peta|_{\Metric_{\Mmu}}=\Metric_{\Mmu}(\start,\Peta(\start);\ed,\Peta(\ed))$, where $\start=\start(\Peta)$ and $\ed=\ed(\Peta)$.

\begin{ex}\label{ex.single.}
An elementary yet useful example is $\Mnu_{\alpha,\beta}:=\beta\,\delta_{\Pgamma_{\alpha}}$, where $\Pgamma_{\alpha}(t):=\alpha t$, $\alpha\in\R$, and $\beta\geq0$.
One can show that, for any $(t,x)\in(0,1]\times\R$, the $\Metric_{\Mnu_{\alpha,\beta}}$ geodesic $(0,0)\xrightarrow{\Pgeo}(t,x)$ is of the form $\Pgeo = \Pgamma_\alpha|_{[0,s]}\cup\chi$, for some $s\in[0,t]$, where $\chi$ is the linear path that connects $(s,\Pgamma_\alpha(s))$ and $(t,x)$.
Recall $\FA_{\alpha,\beta}$ from \eqref{e.FA}--\eqref{e.FA.}.
One can further solve the height function explicitly to get
\begin{align}
	\Hfn[\Mnu_{\alpha,\beta}](t,x)
	=
	%	\sup\big\{ |\chi|_\diri + s\beta : \, (s,\Pgamma_\alpha(s))\xrightarrow{\chi}(t,x), s\in[0,t] \big\}
	%	=
	\FA_{\alpha,\beta}(t,x)
	:=
	t \FA_{\alpha,\beta}(1,\tfrac{x}{t}).
\end{align}
From this, we see that $\Fu:=\partial_x\Hfn[\Mmu_{\alpha,\beta}]$ has a non-entropy shock along $\graph(\Pgamma_\alpha)$.
\end{ex}

\subsection{Kruzhkov entropy}
\label{s.burgers.kruzkov}
We now move to the highly relevant notion of the Kruzhkov entropy. 
Take $\entbm(\alpha):= \alpha^2/4$ and view it as the entropy. 
This is the relevant choice of entropy because 
$\Fphi\mapsto \int \d x \, \entbm(\partial_x\Fphi)$ gives the large-deviation rate function of the Brownian motion with diffusivity 2, which is the invariant measure of the KPZ fixed point, modulo height shifts. 
Set $\entfbm(\alpha) := \alpha^3/12$, which will be seen be the entropy flux.
Take any tractable solution $\Fv$. 
Within each smooth region $\SD\in\Sdom(\Fv)$, the function $\Fv$ is a $\Csp^1$ solution of Burgers' equation. 
Using the chain rule together with $\partial_t\Fv=\frac{1}{4}\partial_{x}(\Fv^2)$ gives
\begin{align}
	\label{e.ent.entf}
	\partial_t \entbm(\Fv)  - \partial_x \entfbm(\Fv) = 0,
	\quad
	\text{on } \SD,
\end{align}
or equivalently $\frac{\d~}{\d t} \int_{\alpha}^{\beta} \d x\,\entbm(\Fv(t,x)) = \entfbm(\Fv(t,\beta))-\entfbm(\Fv(t,\alpha))$, for all $\{t\}\times[\alpha,\beta]\subset\SD$.
This means, within $\SD$, the quantity $\entbm$ is conserved, and $\entfbm$ is the corresponding flux.
On the other hand, along any boundary path $\Pl\in\Sbdy(\Fv)$, 
\begin{align}
	\label{e.ent.production.path.}
	\big( \partial_t \entbm(\Fv)  - \partial_x \entfbm(\Fv) \big)\big|_{\Pl}
	=
	\big( \entbm(\Fv_{\Pl+})-\entbm(\Fv_{\Pl-}) \big)\, \dot{\Pl} \, \delta_{\Pl}
	- 
	\big( \entfbm(\Fv_{\Pl+})-\entfbm(\Fv_{\Pl-}) \big) \, \delta_{\Pl},
\end{align}
which is an element of $\Msp$ in \eqref{e.Msp}.
The equality \eqref{e.ent.production.path.} is interpreted in the weak sense.
Indeed, since $\entfbm(\Fv)=\entfbm(\Fv(t,x))$ can have a jump in $x$ at $\Pl(t)$, its $x$ derivative is equal to $(\entfbm(\Fv_{\Pl+})-\entfbm(\Fv_{\Pl-}))\delta_{\Pl}$ in the weak sense; similarly for $\partial_t \entbm(\Fv)$.
Using \eqref{e.rankine.hugoniot}, we simplify the right-hand side of \eqref{e.ent.production.path.} as
\begin{align}
	\label{e.ent.production.path}
	\big( \partial_t \entbm(\Fv)  - \partial_x \entfbm(\Fv) \big)\big|_{\Pl}
	=
	\tfrac{1}{48} \big(\Fv_{\Pl-}-\Fv_{\Pl+})^3 \, \delta_{\Pl} \in \Msp.
\end{align}
This is nonzero wherever shocks occur, and is positive at non-entropy shocks and negative at entropy shocks.
Accordingly, we consider the positive/negative entropy production
\begin{align}
	\label{e.ent.production.pm}
	\Entp_\pm(\Fv) 
	:= 
	\sum_{\Pl\in\Sbdy(\Fv)} \frac{1}{48} \int_{\start(\Pl)}^{\ed(\Pl)} \d t \, \big|\Fv_{\Pl-}-\Fv_{\Pl+}\big|^3 \ind_{\Fv_{\Pl\mp}>\Fv_{\Pl\pm}}.
\end{align}
Combining \eqref{e.ent.production.path.} over $\Pl\in\Sbdy(\Fv)$ and \eqref{e.ent.entf} over $\SD\in\Sdom(\Fv)$ gives
\begin{align}
	\label{e.ent.production}
	\int_{[0,1]\times\R} \d t \d x\, \big( \partial_t \entbm(\Fv)  - \partial_x \entfbm(\Fv) \big)
	= 
	\Entp_+(\Fv) - \Entp_-(\Fv).
\end{align}
The left-hand side is interpreted as $( \partial_t \entbm(\Fv)  - \partial_x \entfbm(\Fv) )\in\Msp$ acting on the constant function $1$.

The entropy productions $\Entp_\pm(\Fv)$ are closely related to the rate function $\rate$.
We now derive the identities (in \eqref{e.rate.M}) that links the two.
Recall that $\Msp_+$ denotes the subspace of \eqref{e.Msp} consisting of positive measures.
Let $\Fv=\partial_x\Fg$ be a tractable solution and consider 
\begin{align}
	\label{e.M}
	\M[\Fg] 
	:= 
	\sum_{\Pl\in\Sbdy(\Fv)} \big( \dot{\Fg|_{\Pl}} + \dot{\Pl}^2 \big) \, \ind_{\Fv_{\Pl-}\neq\Fv_{\Pl+}} \, \delta_\Pl
	=
	\sum_{\Pl\in\Sbdy(\Fv)} \frac{1}{16} \big( \Fv_{\Pl-} - \Fv_{\Pl+} \big)^2 \, \delta_\Pl
	\in
	\Msp_+,
\end{align}
where we used \eqref{e.Fg.dot..} in the last equality.
Decompose $\M[\Fg]$ into its non-entropy and entropy parts
\begin{align}
	\label{e.Mnon}
	\Mnon[\Fg] 
	&:= 
	\sum_{\Pl\in\Sbdy(\Fv)} \frac{1}{16} \big( \Fv_{\Pl-} - \Fv_{\Pl+} \big)^2 \ind_{\Fv_{\Pl-}>\Fv_{\Pl+}} \, \delta_\Pl,
	\\
	\label{e.Ment}
	\Ment[\Fg] 
	&:= 
	\sum_{\Pl\in\Sbdy(\Fv)} \frac{1}{16} \big( \Fv_{\Pl-} - \Fv_{\Pl+} \big)^2 \ind_{\Fv_{\Pl-}<\Fv_{\Pl+}} \, \delta_\Pl.
\end{align}
Indeed, $\rate(\Metric_{\Mmu})$ can also be viewed as a function of $\Mmu$.
We have been writing $\rate(\Metric_{\Mmu})$ (instead of $\rate(\Mmu)$) to keep the notation consistent with that of \cite{das24}.
The notation however becomes clumsy when taking $\M[\Fg]$, etc., as the measure.
Hence, for these measures, we write
\begin{align}
	&
	\rate\big(\M[\Fg]\big) := \rate\big(\Metric_{\M[\Fg]}\big),
	&&
	\rate\big(\Mnon[\Fg]\big) := \rate\big(\Metric_{\Mnon[\Fg]}\big),
	&&
	\rate\big(\Ment[\Fg]\big) := \rate\big(\Metric_{\Ment[\Fg]}\big).
\end{align}
Comparing \eqref{e.rate} and \eqref{e.ent.production.pm} shows that, for $\Fv=\partial_x\Fg$,
\begin{align}
	\label{e.rate.M}
	\rate\big(\Mnon[\Fg]\big)
	= 
	\Entp_+(\Fv),
	\qquad
	\rate\big(\Ment[\Fg]\big)
	= 
	\Entp_-(\Fv).
\end{align}

The key to our proof is the following identity, \eqref{e.key}, that relates entropy productions to $\Engbm$.
\begin{prop}\label{p.key}
For any tractable solution $\Fv=\partial_x\Fg$,
\begin{align}
	\label{e.key}
	\rate\big(\Mnon[\Fg]\big) - \rate\big(\Ment[\Fg]\big)
	=
	\Entp_+(\Fv) - \Entp_-(\Fv)
	=
	\frac{1}{4}\int_{\R} \d x \, \big( (\partial_x\Fg(1,x))^2 - 4x^2 \big).
\end{align}
\end{prop}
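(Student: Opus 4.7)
The first equality in \eqref{e.key} is immediate from \eqref{e.rate.M}, so my plan is to establish the second equality
\[
\Entp_+(\Fv)-\Entp_-(\Fv)=\tfrac{1}{4}\int_{\R}\d x\,\big((\partial_x\Fg(1,x))^2-4x^2\big)
\]
by turning the bulk identity \eqref{e.ent.production} into a boundary computation, using the reference smooth solution $\Fuu(t,x)=-2x/t$ to subtract off the divergent part.

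The first step is to observe that, since $\Fg=\Fhh$ outside $\cone(c)$ by Definition~\ref{d.tractable}\eqref{d.tractable.cone}, we have $\Fv=\Fuu$ outside $\cone(c)$. Meanwhile $\Fuu=\partial_x\Fhh$ is a classical smooth solution of Burgers' equation on $(0,1]\times\R$, so $\partial_t\entbm(\Fuu)-\partial_x\entfbm(\Fuu)=0$ pointwise there. Setting
\[
\fPhi:=\entbm(\Fv)-\entbm(\Fuu),\qquad \fPsi:=\entfbm(\Fv)-\entfbm(\Fuu),
\]
both $\fPhi$ and $\fPsi$ are bounded (using that $\Fv$ is bounded on $\cone(c)$ by the $\Lsp^\infty$ hypothesis on each of the finitely many smooth regions and boundary paths) and \emph{compactly supported in $x$ uniformly in $t\in[0,1]$}, since they vanish outside $\cone(c)$. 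As distributions on $(0,1]\times\R$,
\[
\partial_t\fPhi-\partial_x\fPsi=\partial_t\entbm(\Fv)-\partial_x\entfbm(\Fv),
\]
which equals the singular measure in \eqref{e.ent.production.path} summed over $\Pl\in\Sbdy(\Fv)$.

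The second step is to apply the divergence theorem (or, equivalently, pair the distribution with a smooth cutoff $\eta_\delta(t)\zeta_R(x)$ approximating $\ind_{[\delta,1]\times[-R,R]}$ and pass $R\to\infty$). Since $\fPhi,\fPsi$ are supported inside $\cone(c)$, the spatial boundary terms at $x=\pm R$ vanish for $R>c$, leaving
\[
\int_{\delta}^{1}\int_{\R}\big(\partial_t\fPhi-\partial_x\fPsi\big)\,\d x\,\d t=\int_{\R}\fPhi(1,x)\,\d x-\int_{\R}\fPhi(\delta,x)\,\d x.
\]
Combining this with \eqref{e.ent.production} restricted to $[\delta,1]$ gives, for every $\delta\in(0,1)$,
\[
\sum_{\Pl\in\Sbdy(\Fv)}\tfrac{1}{48}\int_{\start(\Pl)\vee\delta}^{\ed(\Pl)}\d t\,\big(\Fv_{\Pl-}-\Fv_{\Pl+}\big)^3=\int_{\R}\fPhi(1,x)\,\d x-\int_{\R}\fPhi(\delta,x)\,\d x.
\]

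The third and final step is to send $\delta\to 0$. At $t=1$ one has $\Fuu(1,x)=-2x$ and $\Fv(1,\cdot)=-2x$ outside $[-c,c]$, so
\[
\int_\R \fPhi(1,x)\,\d x=\int_\R\Big(\tfrac{(\partial_x\Fg(1,x))^2}{4}-x^2\Big)\d x=\tfrac{1}{4}\int_\R\big((\partial_x\Fg(1,x))^2-4x^2\big)\d x,
\]
which is the desired right-hand side. For the residual term at $t=\delta$, the support of $\fPhi(\delta,\cdot)$ is contained in $[-c\delta,c\delta]$, and $\fPhi$ is uniformly bounded on $\cone(c)$, so $|\int_\R\fPhi(\delta,x)\d x|\leq C\cdot 2c\delta\to 0$. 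The left-hand side converges to $\Entp_+(\Fv)-\Entp_-(\Fv)$ by monotone convergence (splitting into the $\Fv_{\Pl-}>\Fv_{\Pl+}$ and $\Fv_{\Pl-}<\Fv_{\Pl+}$ parts and using \eqref{e.ent.production.pm}), yielding the claimed identity.

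The only delicate point is step three: ensuring that $\Fv$ is bounded on all of $\cone(c)$ so that the contribution at $t=\delta$ actually vanishes in the limit. This is where the tractable partition is essential — there are only finitely many smooth regions and boundary paths, each giving a global $\Lsp^\infty$ bound, so $\fPhi$ is uniformly bounded on $\cone(c)$ and the $O(\delta)$ estimate above is valid. The rest of the argument is essentially a careful application of the divergence theorem for the piecewise-smooth field $(\fPhi,-\fPsi)$, with the jumps along boundary paths in $\Sbdy(\Fv)$ producing precisely the entropy-production contributions of \eqref{e.ent.production.path}.
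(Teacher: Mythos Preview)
Your proof is correct and follows essentially the same approach as the paper: subtract the smooth reference solution $\Fuu$ so that $\entbm(\Fv)-\entbm(\Fuu)$ and $\entfbm(\Fv)-\entfbm(\Fuu)$ are bounded and supported in $\cone(c)$, then integrate the entropy balance over the space-time slab and use that the spatial boundary terms vanish and the contribution at $t\to 0$ shrinks with the cone. The only cosmetic difference is that the paper works directly with $F(t):=\int_{-2c}^{2c}\big(\entbm(\Fv)-\entbm(\Fuu)\big)\,\d x$, asserts $F(0)=0$, and writes the computation on the fixed strip $[-2c,2c]$, whereas you use a $\delta\to 0$ cutoff and send $R\to\infty$; both amount to the same bookkeeping.
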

\begin{proof}
Take a $c\in(0,\infty)$ such that $\Fg(t,x)=\Fhh(t,x)$ outside of $\cone(c)$, which also gives $\Fv(t,x)=\Fuu(t,x)$ outside of $\cone(c)$.
Consider $\FF(t):=\int_{-2c}^{2c} \d x \, (\entbm(\Fv(t,x))-\entbm(\Fuu(t,x)))$ and note that $\FF(1)= $(RHS of \eqref{e.key}).
Given that $\Fv$ is a tractable solution, it is not hard to check that $\FF$ is piecewise $\Csp^1$ on $[0,1]$ and $\FF(0)=0$.
Hence
\begin{align}
	\label{e.p.key}
	\text{(RHS of \eqref{e.key})}
	= 
	\int_0^{1} \d t \, \dot{\FF}(t)
	= 
	\int_{[0,1]\times[-2c,2c]} \hspace{-15pt} \d t \d x \, \partial_t\entbm(\Fv)
	-
	\int_{[0,1]\times[-2c,2c]} \hspace{-15pt} \d t\d x \, \partial_t\entbm(\Fuu).
\end{align}
In the second last expression, $\partial_t\entbm(\Fg)\in\Msp$ and the integral is interpreted as an element of $\Msp$ acting on the function $1_{[0,1]\times[-2c,2c]}$.
Since $\Fuu(t,x)=-2x/t$ is a $\Csp^1$ solution of Burgers' equation, $\partial_t\entbm(\Fuu)=\partial_x\entfbm(\Fuu)$. 
Replace the last integral in \eqref{e.p.key} with $\int_0^1 \d t \,\entfbm(\Fuu)|_{-2c}^{2c}$.
Since $\Fv=\Fuu$ outside of $\cone(c)$, the integral is equal to $\int_0^1 \d t \,\entfbm(\Fv)|_{-2c}^{2c}=\int_{-2c}^{2c}  \d t \int_{\R}\d x\,\partial_x(\entbm(\Fv))$.
Again, the last expression is interpreted as an element of $\Msp$ acting on the function $1_{[0,1]\times[-2c,2c]}$.
We now have
\begin{align}
	\label{e.p.key.}
	\text{(RHS of \eqref{e.key})}
	= 
	\int_{[0,1]\times[-2c,2c]} \hspace{-15pt} \d t \d x \, \partial_t\entbm(\Fv)
	-
	\int_{[0,1]\times[-2c,2c]} \hspace{-15pt} \d t \d x \, \partial_x\entfbm(\Fv).
\end{align}
Combine the two integrals, note that $\partial_t\entbm(\Fv)-\partial_x\entfbm(\Fv)\in\Msp$ is supported on $\cone(c)\subset[0,1]\times[-2c,2c]$, and use \eqref{e.ent.production} and \eqref{e.rate.M}.
Doing so gives the desire result.
\end{proof}

\subsection{Comparing $\rate$ and $\Entp_\pm$}
\label{s.burgers.comparing}
It seems that the identities in \eqref{e.rate.M} equate the rate function $\rate$ to the entropy productions $\Entp_\pm$, but there is a catch.
Take any $\Mmu\in\Msp_+$ such that $\Fv:=\partial_{x}\Fg:=\partial_{x}\Hfn[\Mmu]$ is a tractable solution.
The identities in \eqref{e.rate.M} involve $\rate(\Ment[\Fg])$ and $\rate(\Mnon[\Fg])$, not $\rate(\Metric_{\Mmu})$, and hence does not equate $\rate(\Metric_{\Mmu})$ to the entropy productions of $\Fv=\partial_x\Metric_{\Mmu}(0,0;\cdot,\cdot)$.
Examples~\ref{ex.badM}--\ref{ex.overshalled} below show that, in general, $\rate(\Metric_{\Mmu})\neq\rate(\Mnon[\Fg])$ and $\rate(\Metric_{\Mmu})\neq\rate(\Ment[\Fg])$. 
Indeed, comparing these quantities boils down to comparing $\Mmu$ and $\Mnon[\Fg]$, and $\Mmu$ and $\Ment[\Fg]$.
We make a conjecture about how $\Mmu$ and $\Mnon[\Fg]$ compare.
\begin{conj}\label{conj.measures}
Given any $\Mmu\in\Msp_+$ such that $\Fv:=\partial_{x}\Fg:=\partial_{x}\Hfn[\Mmu]$ is a tractable solution, we have $\Mmu\lfloor_{\supp(\Mnon[\Fg])}=\Mnon[\Fg]$. 
Namely $\Mmu$ and $\Mnon[\Fg]$ coincide on the non-entropy shocks of $\Fv$.
In particular, $\rate(\Metric_{\Mmu})\geq \rate(\Mnon[\Fg])=\Entp_+(\Fv)$.
\end{conj}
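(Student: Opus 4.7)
The plan is to prove Conjecture~\ref{conj.measures} in two stages: first that $\supp(\Mnon[\Fg])\subset\supp(\Mmu)$, so that every non-entropy shock of $\Fv$ is carried by some $\Pgamma\in\Snet$ in the support of $\Mmu$; and second that, along each such shock path, the density of $\Mmu$ coincides with $\frac{1}{16}(\Fv_{\Pl-}-\Fv_{\Pl+})^2$. Combining these yields $\Mmu\lfloor_{\supp(\Mnon[\Fg])}=\Mnon[\Fg]$, and the inequality $\rate(\Metric_\Mmu)\geq\rate(\Mnon[\Fg])$ is then immediate from \eqref{e.rate}: the densities of $\Mmu$ and $\Mnon[\Fg]$ agree on $\supp(\Mnon[\Fg])$, and any extra nonnegative mass of $\Mmu$ elsewhere only adds to the sum defining $\rate$.

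For the support inclusion I would argue by contradiction. Suppose some $(t_0,x_0)$ lies on a non-entropy shock of $\Fv$ but outside $\supp(\Mmu)$, and choose a small open rectangle $R\subset(0,1)\times\R$ around $(t_0,x_0)$ disjoint from $\supp(\Mmu)$. Any path realizing $\Hfn[\Mmu](t,x)$ for $(t,x)\in R$ picks up no $\Mmu$-mass inside $R$, so splitting the path at its entry point on the past boundary $\partial_{\mathrm{past}}R$ and noting that the Dirichlet length is maximized by straight lines yields the backward Hopf--Lax identity
\begin{equation*}
\Hfn[\Mmu](t,x)=\sup_{(s,y)\in\partial_{\mathrm{past}}R}\Big\{\Hfn[\Mmu](s,y)-\tfrac{(x-y)^2}{t-s}\Big\},\qquad (t,x)\in R.
\end{equation*}
Since the backward Hopf--Lax operator produces entropy solutions of Burgers' equation, $\partial_x\Hfn[\Mmu]|_R$ admits only entropy shocks, contradicting the non-entropy shock at $(t_0,x_0)$.

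For the density matching, fix a non-entropy shock $\Pl\in\Sbdy(\Fv)$, take $t$ in its interior, and let $\dens_\Pl(t)$ denote the density of $\Mmu$ along $\Pl$ at time $t$. The crucial step is to show that every $\Metric_\Mmu$-geodesic from $(0,0)$ to $(t,\Pl(t))$ coincides with $\Pl$ on some sub-interval $(s,t]$. Given this, extending such a geodesic forward along $\Pl|_{[t,t+h]}$ produces a geodesic to $(t+h,\Pl(t+h))$, whence
\begin{equation*}
\tfrac{\d~}{\d t}\Hfn[\Mmu]\big(t,\Pl(t)\big)=\dens_\Pl(t)-\dot\Pl(t)^2.
\end{equation*}
Since $\Hfn[\Mmu]=\Fg$, combining with \eqref{e.Fg.dot.} and \eqref{e.Fg.dot..} gives
\begin{equation*}
\dens_\Pl(t)=\dot\Pl(t)^2+\dot{\Fg|_\Pl}(t)=\tfrac{1}{16}\big(\Fv_{\Pl-}(t)-\Fv_{\Pl+}(t)\big)^2,
\end{equation*}
which is exactly the density of $\Mnon[\Fg]$ along $\Pl$.

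The main obstacle is justifying that every geodesic to $(t,\Pl(t))$ follows $\Pl$ on a neighborhood of the endpoint. Heuristically, at a non-entropy shock the characteristics of $\Fv$ on both sides emanate forward from $\Pl$, so a backward characteristic from $(t,\Pl(t)^\pm)$ converges to $\Pl$ at an earlier time, suggesting that any near-optimal path arriving at $(t,\Pl(t))$ from off $\Pl$ can be improved by first reaching $\Pl$ and then traveling along it. A plausible route to turn this into a proof is to verify the conjecture first for the piecewise linear and cone-supported $\Mmu$ of Definition~\ref{d.pwlinear.measure}, where Proposition~\ref{p.pwlinear.} and Example~\ref{ex.single.} expose the geodesic geometry explicitly, and then approximate a general $\Mmu\in\Msp_+$ by such measures. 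The delicate point is that $\Mnon[\,\cdot\,]$ depends on the jump set of the weak solution in a potentially discontinuous way, so additional regularity or control on the approximation scheme is needed to ensure that the non-entropy shock structure and its densities are preserved in the limit.
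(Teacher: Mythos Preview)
The statement you are attempting is a \emph{conjecture} in the paper, not a proven result; there is no proof in the paper to compare against. The paper explicitly leaves Conjecture~\ref{conj.measures} open and establishes only two partial cases: (i) for piecewise linear and cone-supported $\Mmu$, Lemma~\ref{l.pwlinear}\eqref{l.pwlinear.shock} shows that every non-entropy shock lies on some $\Pgamma\in\Snet$ with $\Fu_{\Pl\pm}=-2(\dot\Pgamma\pm\sqrt{\dens_\Pgamma})$, which gives both the support inclusion and the density matching and hence the full conjecture in that restricted class (and in particular the inequality recorded in Proposition~\ref{p.pwlinear}\eqref{p.pwlinear.rate}); and (ii) for $\Mmu$ a minimizer of \eqref{e.t.main} with $\Ff$ piecewise linear or quadratic, Section~\ref{s.pfmain.measure} proves $\Mmu=\M[\Fh_*]=\Mnon[\Fh_*]$ via a geodesic argument close in spirit to your second step.

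Your outline is reasonable and parallels how the paper handles those special cases, but it is not a proof, and you correctly flag the gap. The geodesic-must-follow-the-shock claim is exactly where the paper's arguments rely on structure unavailable for a general tractable $\Mmu$: in Section~\ref{s.pfmain.measure} the proof uses that $\Mmu$ is a minimizer (so $\Hfn[\Mmu]=\Fh_*$), that $\supp(\Mmu)\subset\graph(\Sbdy(\Fu_*))$ (via Lemma~\ref{l.measure.support} combined with minimality and \eqref{e.monotone}), and the explicit characteristic geometry of $\Fu_*$ coming from piecewise-linear-or-quadratic terminal data. Your proposed approximation route faces precisely the obstacle you name: the paper's approximation tools (Definition~\ref{d.approx}, Lemma~\ref{l.approx.metric}) control $\Metric_{\Mmu_n}$ and $\rate(\Metric_{\Mmu_n})$ but give no continuity for $\Mnon[\Hfn[\Mmu_n]]$. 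One minor slip: in your support-inclusion step you call the displayed formula the ``backward Hopf--Lax identity,'' but it is the forward operator (a $\sup$ over past boundary data); the intended conclusion that such a representation precludes non-entropy shocks is still the right one.
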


Later in Proposition~\ref{p.pwlinear}, we will show that when $\Mmu$ is piecewise linear and cone-supported, $\rate(\Metric_{\Mmu})\geq\rate(\Mnon[\Fg])$.
This, as it turns out, suffices for our proof of Theorem~\ref{t.main}\eqref{t.main.inf}.
Also related to Conjecture~\ref{conj.measures} is Theorem~\ref{t.main}\eqref{t.main.measure}.
There, we prove that $\Mmu=\Mnon[\Fg]$ when $\Mmu$ is a minimizer of \eqref{e.t.main}, under the assumption that $\Ff$ is piecewise linear or quadratic.

Here are the examples that show that $\rate(\Metric_{\Mmu})\neq\rate(\Mnon[\Fg])$ and that $\rate(\Metric_{\Mmu})\neq\rate(\Ment[\Fg])$.
The examples require Lemma~\ref{l.pwlinear1}.
Recall $\FA_{\alpha,\beta}$ from \eqref{e.FA}--\eqref{e.FA.} and recall that $\Pgamma_{\alpha}(t):=\alpha t$.
\begin{ex}\label{ex.overshalled}
Take $\Mmu = \beta_1\,\delta_{\Pgamma_{\alpha_1}}+\beta_2\,\delta_{\Pgamma_{\alpha_2}}$ for any $\alpha_1\neq\alpha_2$.
For fixed $\alpha_1,\alpha_2,\beta_2$, when $\beta_1$ gets large enough, we have $\FA_{\alpha_1,\beta_1}(1,\cdot)>\FA_{\alpha_2,\beta_2}(1,\cdot)$ on $\R$.
Take any such $\beta_1$. 
By Lemma~\ref{l.pwlinear1}, we have $\Hfn[\Mmu]=\FA_{\alpha_1,\beta_1}$, so $\Mnon[\Fg]=\beta_1\delta_{\Pgamma_{\alpha_1}}$ and $\Ment[\Fg]=0$.
In particular,
\begin{align}
	\rate(\Metric_{\Mmu})
	=
	\tfrac{4}{3}(\beta_1^{3/2}+\beta_{2}^{3/2})
	>
	\tfrac{4}{3}\beta_1^{3/2}
	=
	\rate(\Ment[\Fg])=\Entp_+(\Fv).
\end{align}
\end{ex}

\begin{ex}\label{ex.badM}
Take $\Mmu = \beta\,\delta_{\Pgamma_{\alpha}} + \beta\,\delta_{-\Pgamma_{\alpha}}$ with $0<\alpha<\sqrt{\beta}$.
By Lemma~\ref{l.pwlinear1},
\begin{align}
	\Hfn[\Mmu](t,x) 
	= 
	\FA_{\alpha,\beta}(t,x) \vee \FA_{-\alpha,\beta}(t,x) 
	= 
	t\, \big( \FA_{\alpha,\beta}(1,\tfrac{x}{t}) \vee \FA_{-\alpha,\beta}(1,\tfrac{x}{t}) \big). 
\end{align}
From this expression and \eqref{e.FA}--\eqref{e.FA.}, one can check that $\Ment[\Fg]=(\sqrt{\beta}-\alpha)^2\,\delta_0$, where $0$ denotes the constant path at $x=0$.
Hence $\rate(\Ment[\Fg])=\frac{4}{3}(\sqrt{\beta}-\alpha)^3$, which differs from $\rate(\Metric_{\Mmu})=\frac{8}{3}\beta^{3/2}$.
%The solution $\Fu=\partial_x\Hfn[\Mmu]$ has non-entropy shocks along $\graph(\pm\Pgamma_{\alpha})$ and has entropy shocks along $[0,1]\times\{0\}$.
\end{ex}

\subsection{Idea of the proof}
\label{s.burgers.proof}
Let us explain the idea of the proof of Theorem~\ref{t.main}.
To focus on the idea, let us set aside finer technical details and consider the simpler case where $\intvl$ consists of finitely many points and $\Mmu$ is piecewise linear and cone-supported (Definition~\ref{d.pwlinear.measure}).
Put $\Fh=\Hfn[\Mmu]$ and assume $\Fh(1,\cdot)|_{\intvl}=\Ff$, as required by \eqref{e.t.main}. 
Proposition~\ref{p.pwlinear} in Section~\ref{s.tools.pwlinear} shows that $\partial_x\Fh$ is a tractable solution with $\rate(\Metric_{\Mmu})\geq\rate(\Mnon[\Fh])$.
Hence Proposition~\ref{p.key} yields
\begin{align}
\label{e.idea}
\begin{split}
	\rate(\Metric_{\Mmu}) 
	\geq
	\rate(\Mnon[\Fh])
	\geq& 
	\rate(\Mnon[\Fh]) - \rate(\Ment[\Fh]) 
	\\
	=&
	\frac{1}{4}\int_{\R} \d x \, \big( (\partial_x\Fh(1,x))^2 - 4x^2 \big)
	\geq
	\Engbm(\intvl,\Ff),
\end{split}
\end{align}
where the last inequality follows because $\Fh(1,\cdot)|_{\intvl}=\Ff$.
Let us focus on the second and last inequalities in \eqref{e.idea}.
The second inequality becomes sharp if and only if $\Fh$ does not have entropy shocks, and the last becomes sharp if and only of $\Fh(1,\cdot)=\Ff_*$, where $\Ff_*$ is the minimizer of \eqref{e.Engbm}; see Figure~\ref{f.Ff*}.

Hence, the question becomes whether we can construct a tractable solution $\partial_x\Fh_*$ without entropy shocks such that $\Fh_*(1,\cdot)=\Ff_*$.
To answer this question, recall that, for a given initial data $\Fphi$ in a suitable class, the Hopf--Lax operator 
\begin{align}
	\label{e.hopflax}
	\Fh(t,x) 
	= 
	\HL_{0 \to t} \big[\Fphi\big](x)
	:=
	\sup\big\{ |\Pchi|_{\diri} + \Fphi(\Pchi(0)) : \{0\}\times\R \xrightarrow{\Pchi} (t,x) \big\}
\end{align}
produces, through $\partial_x\Fh$, the entropy solution of Burgers' equation with the initial data $\partial_x\Fphi$; see \cite[Chapter~3]{evans22} for example.
(Let us assume that the solution is tractable: In Section~\ref{s.tools.hopflax}, we will specify a class of $\Fphi$ for this to hold.)
Being an entropy solution, $\partial_x\Fphi$ does not have non-entropy shocks.
Now, if we \emph{time reverse} what just stated, the role of entropy shocks and non-entropy shocks exchange.
Namely, the backward Hopf--Lax operator
\begin{align}
	\label{e.hopflax.bk}
	\HLbk_{1\to t} \big[\Fphi\big](x)
	:=
	\inf\big\{ -|\Pchi|_{\diri} + \Fphi(\Pchi(1)) : (t,x)\xrightarrow{\Pchi}\{1\}\times\R  \big\}
\end{align}
produces a weak solution of Burgers' equation, $\partial_x\HLbk_{1\to t}[\Fphi](x)$, \emph{without entropy shocks}.
Given this property, the answer to the question is yes, by taking $\Fh_*(t,x)=\HLbk_{1\to t}[\Ff_*](x)$.

The above discussion explains why we expect the results in Theorem~\ref{t.main}.
Our proof follows the same line of reasoning, but needs to incorporate various approximation procedures to handle more general $\intvl$ and $\Mmu$ than those considered above.

%\begin{rmk}
%One may wonder whether $\HLbk_{1\to t}[\Ff_*](x)$ gives the ``correct'' asymptotic as $t\to 0$.
%Consider, for example, the case where $\Ff_*$ is piecewise linear or quadratic.
%In this case, $\partial_x\Ff_*$ is bounded and $\Ff_*$ coincides with the parabola $\Fhh(1,\cdot)$ outside of a bounded interval.
%This property ensures that $\HLbk_{1\to t}[\Ff_*](x)$ coincides with $\Fhh(t,x):=-x^2/t$ outside of a light cone; see the proof of Lemma~\ref{l.hopflax}.
%This ensure that, for each $x\neq 0$, $\HLbk_{1\to t}[\Ff_*](x)=-x^2/t$ for small enough $t$.
%\end{rmk}

\section{Tools}\label{s.tools}
Here we prepare some tools, notation, and preliminary results to facilitate the proof.
\subsection{Evolving piecewise linear or quadratic data by Hopf--Lax operators}
\label{s.tools.hopflax}
Take a piecewise linear or quadratic $\Fphi\in\Csp(\R)$ such that $\Fphi=\Fhh(1,\cdot)$ outside of a bounded interval, view $\Fphi$ as the terminal data at time $1$, evolve it by the backward Hopf--Lax operator as in \eqref{e.hopflax.bk}, and let $\Fh(t,x):=\HLbk_{1\to t}[\Fphi](x)$ and $\Fu:=\partial_{x}\Fh$.
Under this setup, $\Fu$ can be described (fairly) explicitly by the ``shear-and-cut'' procedure (see \cite[Chapter~2]{whitham11} for example), which we now recall.
Take the piecewise linear function $\Fu(1,\cdot)=\partial_x\Fphi$ and consider its complete graph $\text{CG}(1)\subset\R^2$, which consists of the graph of $\Fu(1,\cdot)$ and vertical line segments at the jumps, as depicted in Figure~\ref{f.shearandcut1}.
To evolve $\Fu(1,\cdot)$ to $\Fu(t,\cdot)$ for $t<1$, apply the shearing $(x,\Fu)\mapsto (x+(1-t)\Fu,\Fu)$ to $\text{CG}(1)$.
The result gives a piecewise linear curve, which is denoted by $\text{CG}'(t)$ and depicted in Figure~\ref{f.shearandcut1}.
The curve $\text{CG}'(t)$ is not a complete graph of a function when it has overhangs.
At each overhang, make a vertical cut, in such way that the regions bounded by the cut and $\text{CG}'(t)$, to the left and right of the cut, have the same area; see Figure~\ref{f.shearandcut2}.
These cuts convert $\text{CG}'(t)$ into $\text{CG}(t)$, and the latter is the complete graph of $\Fu(t,\cdot)$.
The shear-and-cut procedure satisfies the semigroup property.
Namely, for any $1>t_1>t_2>0$, applying the procedure to go from $1$ to $t_1$ and then from $t_1$ to $t_2$ is the same as applying the procedure to go from $1$ to $t_2$.

Let us examine properties of $\Fu$ under the shear-and-cut procedure.
First, the shearing procedure evolves linear functions according to 
\begin{align}
	\label{e.shearing}
	\alpha_0
	\xmapsto{\text{shearing }}
	\alpha_0,
	\qquad
	\tfrac{1}{\alpha_1} (x-\beta)
	\xmapsto{\text{shearing }}
	\tfrac{1}{1-t+\alpha_1}(x-\beta).
\end{align}
Here $\alpha_0,\alpha_1,\beta$ are constant and $\alpha_1\neq 0$.
Next, consider the vertical lines in $\text{CG}(1)$.
As seen in Figure~\ref{f.shearandcut2}, any vertical line associated with an downward jump remains vertical, while any vertical line associated with an upward jump evolves into a line of a finite slope.
In the latter case, the second relation in \eqref{e.shearing} applies for $\alpha_1=0$, by interpreting $\tfrac{1}{\infty} (x-\beta)$ as a vertical line.
In particular, the (backward) time evolution removes all upward jumps in $\Fu(1,\cdot)$.
Since upward jumps correspond to entropy shocks, $\Fu$ does not have any entropy shocks.
Based on these properties, it is not hard to check that at every $t\in(0,1]$, $\Fu(t,\cdot)$ is piecewise linear, that $\Fu(t,x)=-2x/t$ outside of a light cone $\cone(c)$, and that $\Fu$ is bounded on $\cone(c)$.
Next, take any $\Pl(t)\in\R$ where $\Fu(t,\cdot)$ or $\partial_x\Fu(t,\cdot)$ is not continuous.
We seek to describe how $\Pl(t)$ evolves in $t$.
By \eqref{e.shearing}, $\Fu(t,x)|_{x=\Pl(t)^\pm}$ is either a constant $\alpha'$ or of the form $(x-\beta)/(-t+\alpha)$, where we absorb the $1$ in the denominator in \eqref{e.shearing} into $\alpha$. 
Using this and the fact that $\Fh$ solves $\partial_t\Fh=\frac{1}{4}(\partial_x\Fh)^2$ a.e., we see that $\Fh(t,x)|_{x=\Pl(t)^\pm}$ takes the form $\alpha' x+\alpha'{}^2t/4+\alpha''$ or $(x-\beta)^2/2(-t+\alpha)+\alpha'''$.
This and the continuity of $\Fh(t,\cdot)$ give, for constant $\alpha$s and $\beta$s, one of the four equations
\begin{align}
	\left.\begin{array}{l}
		\alpha_{-} \Pl(t)+\tfrac{t}{4}\alpha'{}^2_{-} +\alpha''_{-}
		\\
		\tfrac{1}{2(-t+\alpha_{-})}(\Pl(t)-\beta_{-})^2+\alpha'''_{-}
	\end{array}\right\}
	=
	\left\{\begin{array}{l}
		\alpha_{+} \Pl(t)+\tfrac{t}{4}\alpha'{}^2_{+} +\alpha''_{+}
		\\
		\tfrac{1}{2(-t+\alpha_{+})}(\Pl(t)-\beta_{+})^2+\alpha'''_{+}\ .
	\end{array}\right.
\end{align}
Solving these equations shows that $\Pl(t)$ is of the form
\begin{align}
	\label{e.pwlinear.Pl}
	\Pl(t) = \Fp_1(t) \pm \sqrt{\Fp_2(t)},
	\qquad
	\Fp_1, \Fp_2 \text{ polynomials.}
\end{align}
This property will be used in the proof of Lemma~\ref{l.pwlinear}, specifically the first paragraph in Step~3.

\begin{figure}
\hfill
\begin{minipage}[t]{.38\linewidth}
\fbox{\includegraphics[width=\linewidth]{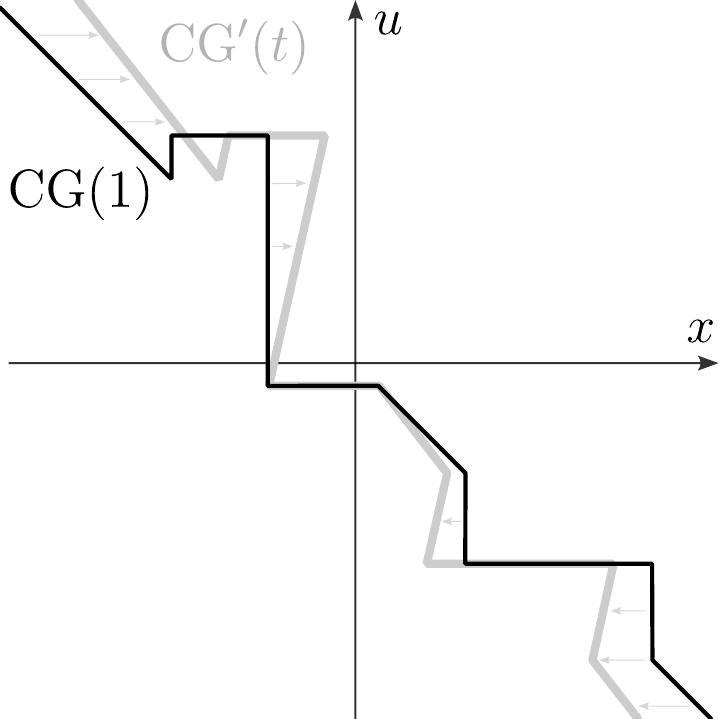}}
\caption{Shearing.
	The shearing maps $(x,\Fu)$ to $(x+(1-t)\Fu,\Fu)$.
}
\label{f.shearandcut1}
\end{minipage}
\hfill
\begin{minipage}[t]{.54\linewidth}
\fbox{\includegraphics[width=\linewidth]{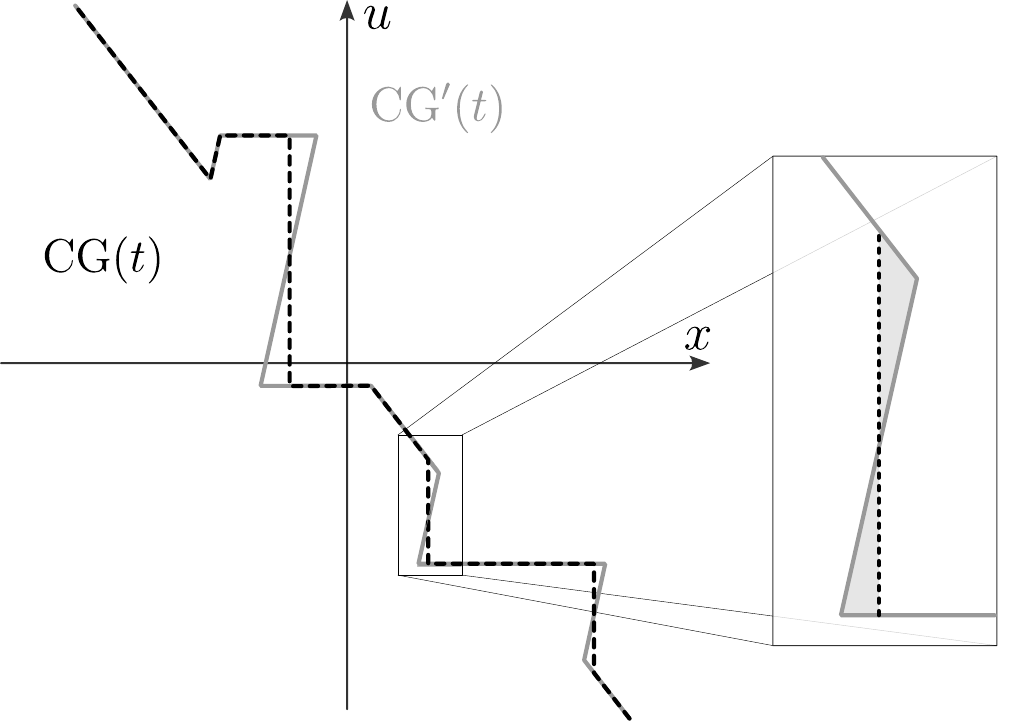}}
\caption{Cutting.
	In the insert, the two gray regions have the same area.
}
\label{f.shearandcut2}
\end{minipage}
\hfill
\end{figure}

The discussion above motivates the definition of a subclass of solutions that we call piecewise linear.
To facilitate the subsequent proof, it will be convenient to consider solutions not just on the entire $[0,1]\times\R$, but also on the smaller region $[s,s']\times\R$.

\begin{defn}\label{d.pwlinear}
Call $\Fv=\partial_x\Fg$ a \textbf{piecewise linear solution on $[s,s']\times\R$} with respect to a tractable partition $(\Stime(\Fv),\Sbdy(\Fv),\Sdom(\Fv))$ of a light cone $\cone(c)$ if $\Fg$ solves $\partial_t\Fg=\frac{1}{4}(\partial_x\Fg)^2$ a.e.\ on $[s,s']\times\R$, $s,s'\in\Stime(\Fv)$, and
\begin{enumerate}%[leftmargin=20pt,label=(\alph*)]
\item \label{d.pwlinear.cone}
$\Fg=\Fhh$ on $([s,s']\times\R)\setminus\cone(c)$ and $\Fg\in\Csp(([s,s']\times\R)\cap\cone(c))$, 
\item \label{d.pwlinear.domain}
for each $\SD\in\Sdom(\Fv)$ such that $\SD\subset(s,s')\times\R$, the solution $\Fv$ is either constant on $\SD$, or is bounded and takes the form $(x-\beta)/(-t+\alpha)$ on $\SD$, for some $\alpha,\beta\in\R$; see Remark~\ref{r.pwlinear}\eqref{r.pwlinear.singular}.
\item \label{d.pwlinear.bdy}
each $\Pl\in\Sbdy(\Fv)$ with $[\start(\Pl),\ed(\Pl)]\subset[s,s']$ takes the form \eqref{e.pwlinear.Pl} and is continuous on $[\start(\Pl),\ed(\Pl)]$, and $\Fv_{\Pl-}-\Fv_{\Pl+}$ is either always positive, always negative, or always zero on $(\start(\Pl),\ed(\Pl))$.
\end{enumerate}
A \textbf{tractable solution on $[s,s']\times\R$} with respect to $(\Stime(\Fv),\Sbdy(\Fv),\Sdom(\Fv))$ is defined similarly.
\end{defn}

\begin{rmk}\label{r.pwlinear}
\begin{enumerate}%[leftmargin=20pt,label=(\alph*)]
\item[]
\item
Indeed, a piecewise linear solution is a tractable solution.
The phrase piecewise linear refers to the property that $\Fv(t,x)$ is piecewise linear in $x$ for fixed $t$.
\item 
Because $\Fv$ is only defined on $[s,s']\times\R$, the portion of $(\Stime(\Fv),\Sbdy(\Fv),\Sdom(\Fv))$ that partitions $([0,s]\cup[s',1])\times\R$ is irrelevant, and
we could have defined the partition only on $[s,s']\times\R$.
\item \label{r.pwlinear.singular}
The function $(x-\beta)/(-t+\alpha)$ is singular at $t=\alpha$.
In this case, $\Fv$ being bounded on $\SD$ requires either $\bar{\SD}\cap(\{\alpha\}\times\R)=\emptyset$ or $\SD\subset\{(t,x):|x-\beta|<c'\,|t-\alpha|\}$ for some $c'<\infty$.
\end{enumerate}
\end{rmk}

The preceding discussion shows that $\partial_x(\HLbk_{1\to t}[\Fphi](x))$ is a piecewise linear solution without entropy shocks.
A similar argument works for the (forward) Hopf--Lax operator and gives a piecewise linear solution without non-entropy shocks.
These properties are summarized in Lemma~\ref{l.hopflax}\eqref{l.hopflax.bk}--\eqref{l.hopflax.}.
We will also need the property stated in Lemma~\ref{l.hopflax}\eqref{l.hopflax..}, which is readily verified from the shear-and-cut procedure above.
\begin{lem}\label{l.hopflax}
\begin{enumerate}%[leftmargin=20pt,label=(\alph*)]
\item[]
\item \label{l.hopflax.bk}
Suppose $\Fphi\in\Csp(\R)$ is piecewise linear or quadratic, and equal to $\Fhh(1,\cdot)$ outside of a bounded interval.
Then $\partial_x(\HLbk_{1\to t}[\Fphi](x))$ is a piecewise linear solution on $[0,1]\times\R$ without entropy shocks.
\item \label{l.hopflax.}
Take any $s\in(0,1)$, and suppose $\Fphi\in\Csp(\R)$ is piecewise linear or quadratic, and equal to $\Fhh(s,\cdot)$ outside of a bounded interval.
Then $\partial_x(\HL_{s\to t}[\Fphi](x))$ is a piecewise linear solution on $[s,1]\times\R$ without non-entropy shocks.

\item \label{l.hopflax..}
Same setting as in Part~\eqref{l.hopflax.}. 
Put $\HL_{s\to t}[\Fphi](x):=\fPhi(t,x)$, and take any linear path $\Pgamma$ with $\start(\Pgamma)=s$.
Then $\fPhi|_{\Pgamma} (t)=\fPhi(t,\Pgamma(t))$ is continuous and piecewise rational.
\end{enumerate}
\end{lem}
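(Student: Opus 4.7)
The plan is to collect what the shear-and-cut discussion preceding the lemma has already established, and verify Definition~\ref{d.pwlinear} item by item. For Part~\eqref{l.hopflax.bk}, since $\Fphi$ is piecewise linear or quadratic and coincides with $\Fhh(1,\cdot)$ outside a bounded interval, the datum $\Fu(1,\cdot)=\partial_x\Fphi$ is piecewise linear, equals $-2x$ outside a bounded interval, and its complete graph $\mathrm{CG}(1)$ consists of finitely many linear pieces joined by vertical segments at jumps. Applying the backward shearing map and reading off \eqref{e.shearing}, each linear branch of $\Fu(t,\cdot)$ is either constant $\alpha'$ or of the form $(x-\beta)/(-t+\alpha)$ with bounded range on the relevant region, which is Condition~\eqref{d.pwlinear.domain} of Definition~\ref{d.pwlinear}. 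The paragraph around \eqref{e.pwlinear.Pl} already shows that boundary paths are of the stated form, giving Condition~\eqref{d.pwlinear.bdy}; continuity of $\Fg$ on the light cone together with $\Fg=\Fhh$ outside gives Condition~\eqref{d.pwlinear.cone}. Under backward shearing, upward jumps of $\Fu(1,\cdot)$ (entropy shocks) get turned into finite-slope linear branches and hence do not persist, while cuts only create downward jumps. Thus $\partial_x\HLbk_{1\to t}[\Fphi]$ has no entropy shocks.

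For Part~\eqref{l.hopflax.}, I would apply the same argument to the forward analogue of the shear-and-cut procedure, which is the classical Lax--Oleinik construction of the entropy solution of $\partial_t\Fu = \tfrac{1}{4}\partial_x(\Fu^2)$ from the Hopf--Lax representation; see \cite[Chapter~3]{evans22}. Under forward shearing the roles of upward and downward jumps in $\Fu(s,\cdot)$ swap: downward jumps (non-entropy shocks) are mapped to finite-slope linear branches, while upward jumps remain vertical and new upward jumps are inserted by the cutting step. The branch-form, boundary-path-form, and light-cone containment arguments from Part~\eqref{l.hopflax.bk} transfer verbatim, yielding a piecewise linear solution on $[s,1]\times\R$ without non-entropy shocks.

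For Part~\eqref{l.hopflax..}, Part~\eqref{l.hopflax.} guarantees that $\partial_x\fPhi$ is a piecewise linear solution on $[s,1]\times\R$. Hence on each smooth region $\SD\in\Sdom(\partial_x\fPhi)$, either $\partial_x\fPhi$ is a constant $\alpha'$ or $\partial_x\fPhi(t,x)=(x-\beta)/(-t+\alpha)$. Integrating in $x$ and using $\partial_t\fPhi=\tfrac{1}{4}(\partial_x\fPhi)^2$ a.e., one finds that $\fPhi$ equals $\alpha'x+\tfrac{t}{4}\alpha'^2+\alpha''$ or $(x-\beta)^2/(2(\alpha-t))+\alpha'''$ on $\SD$. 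Substituting a linear $\Pgamma(t)=\gamma_0+\gamma_1 t$ turns either of these expressions into a rational function of $t$, so $\fPhi|_{\Pgamma}$ is piecewise rational on $[s,1]$. Continuity follows directly from Condition~\eqref{d.pwlinear.cone} of Definition~\ref{d.pwlinear} applied to $\fPhi$, which was established in Part~\eqref{l.hopflax.}.

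The one technical point I expect will require care is the identification of the geometric shear-and-cut algorithm with the variational formulas \eqref{e.hopflax} and \eqref{e.hopflax.bk}: the shear-and-cut is a semigroup whose output is claimed to coincide with the Hopf--Lax supremum, respectively infimum. This equivalence is classical and I would cite \cite[Chapter~3]{evans22} or \cite[Chapter~2]{whitham11} rather than rederive it by matching Hopf--Lax extremizers to post-cut characteristics. A minor secondary check is that the boundary paths remain of the form \eqref{e.pwlinear.Pl} across successive cutting times in $\Stime(\Fv)$, which follows from the semigroup property of shear-and-cut already noted in this subsection.
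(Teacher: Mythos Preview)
Your proposal is correct and follows essentially the same approach as the paper: the paper does not give a separate proof of the lemma but simply states that ``the preceding discussion shows'' Parts~\eqref{l.hopflax.bk}--\eqref{l.hopflax.} and that Part~\eqref{l.hopflax..} ``is readily verified from the shear-and-cut procedure above.'' You have spelled out the verification of Definition~\ref{d.pwlinear} item by item in exactly the way the paper intends, including the time-reversal swap of entropy and non-entropy shocks for the forward operator, and the rational-in-$t$ form of $\fPhi|_\Pgamma$ obtained by substituting a linear path into the two branch forms of $\fPhi$ on each smooth region.
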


\subsection{Matching}
\label{s.tools.matching}
Here and in the next section, we focus on results of a ``matching'' nature.
Roughly speaking, these results either assert or utilize the statement that two types of functions are the same.
%The first type, denoted $\Fg$ and $\Fv=\partial_{x}\Fg$, will be tractable or piecewise linear solutions.
%The second type, denoted $\Fh$, etc., will be $\Hfn[\Mnu]$ or variants of it.
Lemma~\ref{l.match} gives a criterion for two such functions to coincide, and the rest of Sections~\ref{s.tools.matching}--\ref{s.tools.pwlinear} utilizes Lemma~\ref{l.match} to establish results.

To state Lemma~\ref{l.match}, we need some notation. 
Given a $\mu\in \Msp_{+}$, we observe that the length of a path $\Peta$ under the metric $e_{\mu}$ (defined in \eqref{e.Mmu}) can be written as
\begin{align}
	\label{e.length}
	|\Peta|_{\Metric_{\Mmu}}
	:=
	\Mmu(\graph\Peta) + |\Peta|_{\diri}
	=
	\int_{\start(\eta)}^{\ed(\eta)} \d t\, \sum_{\Pgamma\in\Snet} \dens_{\Pgamma} \ind_{\Pgamma=\Peta}
	+
	\int_{\start(\eta)}^{\ed(\eta)} \d t\,\big( -\dot{\Peta}^2 \big).
\end{align}
For a set $\SS$ of paths, write $\graph(\SS):=\cup_{\Pl\in\SS}\graph(\Pl)$.
Take any tractable partitions $(\Stime,\Sbdy,\Sdom)$ of a light cone $\cone(c)$.
Take any $\und{s}<\bar{s}\in\Stime$, let $\Sbdy'$ be a finite set of piecewise $\Csp^1[\und{s},\bar{s}]$ paths; assume that the paths do not intersect except at $t=\und{s},\bar{s}$ , that $\graph(\Sbdy')\subset\graph(\Sbdy)$, and that $\Sbdy'$ contains the left and right boundaries of $\cone(c)\cap([\und{s},\bar{s}]\times\R)$.
Let $\Sdom'$ denote the set of open connected regions in $\cone(c)\cap((\und{s},\bar{s})\times\R)$ partitioned by the paths in $\Sbdy'$.
For $\dom\in\Sdom'$ and $\FF\in\Csp(\bar{\dom})$, define the domain Hopf--Lax operator
\begin{align}
	\label{e.hopflax.domain}
	\HL_{\partial\dom}[\FF] (t,x)
	:=
	\sup\big\{ |\Pchi|_\diri + \FF(\start,\Pchi(\start)) : &\partial\dom\xrightarrow{\Pchi}(t,x)\},
	\quad
	(t,x) \in \dom.
\end{align}
Take a $\Fv=\partial_{x}\Fg$ that is a tractable solution on $[\und{s},\bar{s}]\times\R$ with respect to $(\Stime,\Sbdy,\Sdom)$ and satisfies
\begin{align}
	\label{e.l.match.hopflax}
	\HL_{\partial\dom}[\Fg] =\Fg \ \text{ on } \dom,
	\qquad
	\text{for all } \dom\in\Sdom'.
\end{align}
For any $\Pl\in\Sbdy'$, because $\graph(\Sbdy')\subset\graph(\Sbdy)$ and because $\Fv$ is a tractable solution with respect to $(\Stime,\Sbdy,\Sdom)$, by completing the square in \eqref{e.Fg.dot.}, we have $\dot{\Fg|_\Pl}+\dot{\Pl}^2\geq 0$.
Given this, define
\begin{align}
	\label{e.l.match.Mnu}
	\Mnu &:= \sum_{\Pl\in\Sbdy'} \big( \dot{\Fg|_\Pl}+\dot{\Pl}^2 \big)\, \delta_{\Pl}\ \in\Msp_+,
	\\
	\label{e.l.match.Fh}
	\til{\Fh}(t,x) &:= \sup \big\{ |\Peta|_{\Metric_{\Mnu}} + \Fg(\und{s},\Peta(\und{s})) : \{\und{s}\}\times\R \xrightarrow{\Peta} (t,x) \big\}.
\end{align}
When $\und{s}=0$, since $\Fg=\Fhh$ outside of $([0,\bar{s}]\times\R)\cap\cone(c)$, we have $\lim_{t\to 0}\Fg(t,x)=-\infty\ind_{x\neq 0}$.
Given this, we set $\Fg(0,\cdot):=-\infty\ind_{x\neq 0}$ in \eqref{e.l.match.Fh}, which amounts to requiring $\Peta(0)=0$.
\begin{lem}\label{l.match}
Under the preceding setup, $\Fg =\til{\Fh}$ on $(\und{s},\bar{s}]\times\R$.
\end{lem}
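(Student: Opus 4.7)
The plan is to prove the two inequalities $\til{\Fh}\leq\Fg$ and $\til{\Fh}\geq\Fg$ on $(\und{s},\bar{s}]\times\R$ separately.

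For the upper bound $\til{\Fh}\leq\Fg$, I would fix any admissible path $\Peta:[\und{s},t]\to\R$ with $\Peta(t)=x$. Because $\Fv$ is tractable, $\Fg$ is locally Lipschitz on $\cone(c)$ (piecewise $\Csp^1$ with bounded derivatives, continuous across boundaries), so $r\mapsto\Fg(r,\Peta(r))$ is absolutely continuous and $\Fg(t,x)-\Fg(\und{s},\Peta(\und{s}))=\int_{\und{s}}^t \frac{d}{dr}\Fg(r,\Peta(r))\,dr$. Partition $[\und{s},t]$ into the closed set $J:=\{r:(r,\Peta(r))\in\graph(\Sbdy')\}$ and its open complement, and decompose $J=\bigcup_{\Pl\in\Sbdy'}J_\Pl$ with $J_\Pl:=\{r\in J:\Peta(r)=\Pl(r)\}$. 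On the complement, $\Peta$ lies in a single region $\dom\in\Sdom'$ where $\Fg$ is $\Csp^2$ and satisfies $\partial_t\Fg=\tfrac14(\partial_x\Fg)^2$, so completing the square gives $\frac{d}{dr}\Fg(r,\Peta(r))=\tfrac14\Fv^2+\dot\Peta\Fv=(\tfrac12\Fv+\dot\Peta)^2-\dot\Peta^2\geq-\dot\Peta^2$. On $J_\Pl$, the Sobolev coincidence principle (if two $\Hsp^1$ functions agree on a set, their derivatives agree a.e.\ there) yields $\dot\Peta=\dot\Pl$ a.e., and combining with \eqref{e.Fg.dot.}--\eqref{e.Fg.dot..} and the definition of $\Mnu$ in \eqref{e.l.match.Mnu} gives $\frac{d}{dr}\Fg(r,\Peta(r))=\dot{\Fg|_\Pl}=\dens_\Pl-\dot\Pl^2=\dens_\Pl-\dot\Peta^2$ a.e. Integrating produces $\Fg(t,x)-\Fg(\und{s},\Peta(\und{s}))\geq -\int_{\und{s}}^t\dot\Peta^2\,dr+\Mnu(\graph\Peta)=|\Peta|_{\Metric_\Mnu}$, and taking the supremum over $\Peta$ yields $\til{\Fh}\leq\Fg$.

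For the lower bound $\Fg\leq\til{\Fh}$, I would construct a maximizing path $\Peta^*$ from $(t_0,x_0)$ back to $t=\und{s}$ by iterative backward tracing. If the current position lies in a region $\dom\in\Sdom'$, follow the backward characteristic of $\Fg$, a straight line of slope $-\Fv/2$, bounded since $\Fv\in\Lsp^\infty(\dom)$; by the hypothesis \eqref{e.l.match.hopflax} extended to $\bar\dom$ by continuity, this characteristic attains the supremum in \eqref{e.hopflax.domain} at some $p\in\partial\dom$ with strictly smaller time, so $d(p;\cdot)+\Fg(p)=\Fg(\cdot)$. If the current position lies on a boundary path $\Pl\in\Sbdy'$, follow $\Pl$ backward until the next event (entry into a new region, or a time in $\Stime$); along this segment the contribution to $|\cdot|_{\Metric_\Mnu}$ equals $\int(\dens_\Pl-\dot\Pl^2)\,dr=\int\dot{\Fg|_\Pl}\,dr=\Delta\Fg$ by \eqref{e.Fg.dot.}, matching the drop in $\Fg$. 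Each step preserves the identity ``accumulated $|\cdot|_{\Metric_\Mnu}+\Fg(\text{start}) = \Fg(t_0,x_0)$,'' and since the partition $(\Stime,\Sbdy,\Sdom)$ is finite and each step strictly decreases time, the process terminates at $t=\und{s}$ after finitely many steps, yielding the desired $\Peta^*$.

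The hardest part is making this backward-tracing argument rigorous: one must verify that the supremum in $\HL_{\partial\dom}[\Fg]$ is genuinely attained on $\partial\dom$ (not only approached as $p\to(t_k,x_k)$), and that the process cannot get stuck, for instance at corners in $\Stime\times\R$ or via an accumulation of infinitesimal excursions between regions and boundary paths. Both issues are resolved by the specific geometric structure of the tractable partition: $\Fv$ is bounded on each $\dom$, so backward characteristics have bounded slope and meet $\partial\dom$ in strictly positive time; the boundary paths $\Pl\in\Sbdy'$ are $\Csp^1$ and internally disjoint, so only finitely many region/boundary segments can ever be visited. A secondary technical point, for the upper bound, is the absolute continuity of $r\mapsto\Fg(r,\Peta(r))$ used to invoke the fundamental theorem of calculus; this follows from the local Lipschitz continuity of $\Fg$ on $\cone(c)$ noted above.
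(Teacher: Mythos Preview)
Your two inequalities match the paper's, but the methods differ in both directions, and there is one genuine oversight plus one unnecessary complication.

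For $\til{\Fh}\leq\Fg$, your pathwise argument via absolute continuity and completing the square is more direct than the paper's, which instead introduces the stopping time $\tau=\inf\{t:\Fg(t,x)\leq\til{\Fh}(t,x)-\eps\}$ and derives a contradiction by a case analysis on where the optimal path last touches $\graph(\Sbdy')$. Your route is cleaner, but note a slip: you assert that on the complement of $J$ the path lies in a region $\dom\in\Sdom'$ where $\Fg$ is $\Csp^2$. This is false in general, because $\Sdom'$ is \emph{coarser} than $\Sdom(\Fv)$; a region $\dom\in\Sdom'$ can contain boundary paths from $\Sbdy(\Fv)\setminus\Sbdy'$, across which $\partial_x\Fg$ jumps. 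The fix is easy: on the set where $\Peta$ coincides with such an $\Pl'\in\Sbdy(\Fv)\setminus\Sbdy'$, the same Sobolev-coincidence reasoning together with \eqref{e.Fg.dot.} gives $\dot{\Fg|_{\Pl'}}+\dot{\Pl'}^2\geq 0$, hence still $\tfrac{d}{dr}\Fg(r,\Peta(r))\geq-\dot\Peta^2$, with no $\Mnu$-contribution since $\Pl'\notin\Sbdy'$.

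For $\Fg\leq\til{\Fh}$, your iterative backward tracing is heavier than needed and creates the termination and attainment issues you flag. The paper avoids all of this in one step: given $(t,x)\in\dom\in\Sdom'$, use \eqref{e.l.match.hopflax} to pick a near-optimal straight segment $\partial\dom\xrightarrow{\Pchi}(t,x)$; if $\start(\Pchi)>\und{s}$ then $\Pchi(\start)$ lies on some $\Pl\in\Sbdy'$, and since every $\Pl\in\Sbdy'$ is defined on the \emph{full} interval $[\und{s},\bar{s}]$, one simply takes $\Peta:=\Pl|_{[\und{s},\start]}\cup\Pchi$. The $\Mnu$-length of the $\Pl$-segment telescopes to $\Fg(\start,\Pl(\start))-\Fg(\und{s},\Pl(\und{s}))$ by the definition of $\Mnu$, so $|\Peta|_{\Metric_\Mnu}+\Fg(\und{s},\Peta(\und{s}))\geq\Fg(t,x)-\eps$. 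No iteration, no attainment, no termination argument.
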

\begin{proof}
Let us prove that $\Fg\leq\til{\Fh}$ on $\cone(c)\cap([\und{s},\bar{s}]\times\R)$.
Fix any $\eps>0$ and consider $(t,x)\in\dom$ for some $\dom\in\Sdom'$.
Use \eqref{e.l.match.hopflax} to get a path $\partial\dom\xrightarrow{\chi}(t,x)$ such that $|\Pchi|_{\diri} \geq \Fg(t,x) - \Fg(\start,\Pchi(\start))-\eps$, where $\start:=\start(\Pchi)$.
If $\start=\und{s}$, by \eqref{e.l.match.Fh}, $\Fg(t,x)\leq\til{\Fh}(t,x)+\eps$.
Otherwise $\Pchi(\start)=\Pl(\start)$ for some $\Pl\in\Sbdy'$, and we let $\Peta:=\Pl|_{[\und{s},\start]}\cup\Pchi$, where $\cup$ means concatenating the paths, defined before Example~\ref{ex.single.}.
This way $|\Peta|_{\Metric_{\Mnu}} = \Fg(\start,\Pchi(\start))-\Fg(\und{s},\Peta(\und{s})) + |\Pchi|_{\diri} \geq \Fg(t,x)-\Fg(\und{s},\Peta(\und{s}))-\eps$.
Hence $\Fg(t,x)\leq\til{\Fh}(t,x)+\eps$.
So far we have proven $\Fg\leq\til{\Fh}+\eps$ on every $\dom\in\Sdom'$.
Since $\Fv$ is a tractable solution, $\Fg\in\Csp(\cone(c))$, and it is not hard to check that $\til{\Fh}\in\Csp(\cone(c))$.
Hence the inequality extends to $\cone(c)\cap([\und{s},\bar{s}]\times\R)$.
Since $\eps>0$ was arbitrary, the desired inequality follows.

Next, let us prove that $\Fg\geq\til{\Fh}$ on $\cone(c)\cap([\und{s},\bar{s}]\times\R)$.
Fix any $\eps>0$ and set $\tau:=\inf\{t\in[\und{s},\bar{s}]: \Fg(t,x)\leq\til{\Fh}(t,x)-\eps,\text{ for some } |x|\leq ct\}$, with the convention that $\inf\emptyset:=\infty$.
Our goal is to prove $\tau\geq \bar{s}$.
First, since $\Fg,\til{\Fh}\in\Csp(\cone(c))$ and since $\Fg(\und{s},x)=\til{\Fh}(\und{s},x)$ for $|x|\leq c\und{s}$ by \eqref{e.l.match.Mnu}, we have $\tau>\und{s}$.
Given this, we next argue by contradiction. 
Assume $\tau\in(\und{s},\bar{s})$, so that
\begin{align}
	\label{e.l.match.contradict}
	\Fg(\tau,x) = \til{\Fh}(\tau,x) - \eps,
\end{align}
for some $|x|\leq c\tau$.
Take any optimal $\Peta$ in \eqref{e.l.match.Fh} for $(t,x)\mapsto(\tau,x)$ and consider
\begin{align}
	\label{e.l.match.s}
	\sigma := \sup\big\{ t\in[\und{s},\tau) : \Peta(s)=\Pl(s), \text{ for some } \Pl\in\Sbdy' \big\}.
\end{align}
Note that the set in \eqref{e.l.match.s} is nonempty. 
Otherwise $|\Peta|_{\Metric_{\Mnu}}=|\Peta|_\diri$, and it follows that $\til{\Fh}(\tau,x) \leq \HL_{\partial\dom}[\Fg](\tau,x)=\Fg(\tau,x)$ for the $\dom\in\Sdom'$ that contains $(\tau,x)$, which is prohibited by \eqref{e.l.match.contradict}.
We next argue for a contradiction in two separate cases.
\begin{description}%[leftmargin=10pt]
\item[Case 1, $\sigma<\tau$]
In this case, $(\tau,x)\in\dom$ for some $\dom\in\Sdom'$. 
By \eqref{e.l.match.s}, $(\sigma,\Peta(\sigma))\in\partial\dom$ and $\Peta|_{(\sigma,\tau)}\subset\dom$.
Write $\til{\Fh}(\tau,x)=|\Peta|_{[\,\und{s},\sigma]}|_\diri + \til{\Fh}(\sigma,\Peta(\sigma))$, and note that $\til{\Fh}(\sigma,\Peta(\sigma))<\Fg(\sigma,\Peta(\sigma))+\eps$ because $\sigma<\tau$.
Hence
\begin{align}
	\til{\Fh}(\tau,x) 
	< 
	|\Peta|_{[\und{s},\sigma]}|_\diri + \Fg(\sigma,\Peta(\sigma))+\eps
	\leq
	\HL_{\partial\dom}[\Fg](\tau,x) + \eps
	=
	\Fg(\tau,x)+\eps,
\end{align}
where the last equality followed by \eqref{e.l.match.hopflax}.
This contradicts \eqref{e.l.match.contradict}.

\item[Case 2, $\sigma=\tau$]
In this case, there exists $\Pl_0\in\Sbdy$ such that $\Peta(s_j)=\Pl_0(s_j)$ for a sequence $s_1<s_2<\ldots \to \tau$.
Take an $s:=s_k\in[0,\tau)$ from the sequence, with a large enough $k$ such that
\begin{align}
	\label{e.l.match.noninterseting}
	\mathrm{ConvexHull}(\graph(\Peta|_{[s,\tau]})) \cap \graph(\Pl) = \emptyset, \text{ for all } \Pl \in \Sbdy'\setminus\{\Pl_0\}.
\end{align}
Write $\til{\Fh}(\tau,x)=|\Peta|_{[s,\tau]}|_{\Metric_{\Mnu}} + \til{\Fh}(s,\Peta(s))$.
To get a handle on $|\Peta|_{[s,\tau]}|_{\Metric_{\Mnu}}$, consider $\SO:=\{t\in[s,\tau]:\Peta(t)\neq\Pl_0(t)\}$, which is open, and write it as a disjoint union $\SO=\cup_j(\start_j,\ed_j)$.
This way,
\begin{align}
	\label{e.l.match.length}
	\til{\Fh}(\tau,x)
	=
	|\Peta|_{[s,\tau]}|_{\Metric_{\Mnu}} + \til{\Fh}(s,\Peta(s))
	=
	\int_{[s,\tau]\setminus\SO} \d t \, \dot{\Fg|_{\Peta}} + \sum_{j} \big| \Peta|_{[\start_j,\ed_j]} \big|_{\diri} + \til{\Fh}(s,\Peta(s)).
\end{align}
By \eqref{e.l.match.noninterseting} and the definition of $(\start_j,\ed_j)$, we have $\graph(\Peta|_{(\start_j,\ed_j)})\cap\graph(\Sbdy')=\emptyset$.
Hence $\graph(\Peta|_{(\start_j,\ed_j)})\subset\dom$, for some $\dom\in\Sdom'$, and $(\start_j,\Pl_0(\start_j))\in\partial\dom$.
These properties together with \eqref{e.l.match.hopflax} imply
\begin{align}
	\Fg(t,\Peta(t)) = \HL_{\partial\dom}[\Fg](t,\Peta(t)) \geq|\Peta|_{[\start_j,t]}|_{\diri}+\Fg(\start_j,\Peta(\start_j)),
	\qquad
	\text{for all } t\in(\start_j,\ed_j).
\end{align}
Taking the $t\to\ed_{j}$ limit gives $\Fg(\ed_j,\Peta(\ed_j))\geq |\Peta|_{[\start_j,\ed_j]}|_{\diri}+\Fg(\start_j,\Peta(\start_j))$.
Rewrite this as $|\Peta|_{[\start_j,\ed_j]}|_{\diri} \leq \int_{\start_j}^{\ed_{j}}\d t\, \dot{\Fg|_{\Peta}}$, and insert it into \eqref{e.l.match.length}.
Doing so gives
$
\til{\Fh}(\tau,x) \leq \Fg(\tau,x) - \Fg(s,\Peta(s)) + \til{\Fh}(s,\Peta(s)).
$
The last term is $<\Fg(s,\Peta(s))+\eps$ because $s<\tau$.
Combining the last two inequalities gives $\til{\Fh}(\tau,x) < \Fg(\tau,x) +\eps$, which contradicts \eqref{e.l.match.contradict}.
\end{description}

So far we have proven $\Fg=\til{\Fh}$ on $\cone(c)\cap([\und{s},\bar{s}]\times\R)$, and we need to prove the same equality outside of the light cone, namely on $([\und{s},\bar{s}]\times\R)\setminus \cone(c)$.
Since $\Fg$ is equal to $\Fhh$ outside of the light cone, it suffices to prove that the same holds for $\til{\Fh}$.
Take any $(t,x)\in ([\und{s},\bar{s}]\times\R)\setminus \cone(c)$.
Since $\Mnu$ is supported in $\cone(c)$, any optimal path $\Peta$ in \eqref{e.l.match.Fh} starts from either $\SA_1:=\{(s,cs),(s,-cs):s\in[\und{s},\bar{s}]\}$ or $\SA_2:=\{\und{s}\}\times((-\infty,\und{s}]\cup[\und{s},\infty))$.
We have $\Fg=\til{\Fh}$ on $\SA_1$ because $\SA_1\subset\cone(c)$, and $\Fg=\til{\Fh}$ on $\SA_2$ by the definition of $\til{\Fh}$.
Also, $\Fh$ is equal to $\Fhh$ on $\SA_1\cup\SA_2$.
Using these properties in \eqref{e.l.match.Fh} gives that $\til{\Fh}(t,x)=-x^2/t=\Fhh(t,x)$.
\end{proof}

We next apply Lemma~\ref{l.match} to prove a reconstruction result.
\begin{prop}\label{p.reproducing}
For any tractable solution $\Fv=\partial_x\Fg$, we have $ \Fg = \Hfn[\M[\Fg]] $.
\end{prop}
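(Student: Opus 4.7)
The plan is to apply Lemma~\ref{l.match} with $\und{s}=0$, $\bar{s}=1$, and a carefully chosen $\Sbdy'\subset\Sbdy(\Fv)$ so that the associated measure $\Mnu$ in \eqref{e.l.match.Mnu} coincides with $\M[\Fg]$. I would take $\Sbdy'$ to consist of (a) the shock paths in $\Sbdy(\Fv)$, i.e., those $\Pl$ with $\Fv_{\Pl-}\neq\Fv_{\Pl+}$, together with (b) the left and right boundaries of $\cone(c)$, which the lemma requires. After possibly enlarging $c$ slightly, we may assume $\Fv=\Fuu$ on a neighborhood of these two cone-boundary paths, so that they are not shocks. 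The partition $(\Stime(\Fv),\Sbdy',\Sdom')$ is obtained by merging original smooth regions $\SD\in\Sdom(\Fv)$ across the dropped (non-shock) boundary paths; within each $\dom\in\Sdom'$ the function $\Fv$ is continuous and bounded.

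The main obstacle is verifying the Hopf--Lax hypothesis $\HL_{\partial\dom}[\Fg]=\Fg$ on each $\dom\in\Sdom'$. I would use the method of characteristics. Within each original smooth region $\SD\in\Sdom(\Fv)$, the $\Csp^2$ smoothness of $\Fg$ together with $\partial_t\Fg=\tfrac14(\partial_x\Fg)^2$ makes the characteristics straight lines, and the classical Hopf--Lax identity $\HL_{\partial\SD}[\Fg]=\Fg$ holds. Because $\Fv$ is continuous across a non-shock boundary path (where it has been removed to form $\dom$), a backward characteristic from $(t,x)\in\SD\subset\dom$ can be extended with unchanged slope into adjacent smooth regions, continuing until it hits $\partial\dom$; along the concatenated characteristic, $\Fg(t,x)=\Fg(\start,\Pchi(\start))+|\Pchi|_\diri$. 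The upper bound $|\Pchi|_\diri+\Fg(\start,\Pchi(\start))\le\Fg(t,x)$ for any competing path $\Pchi\colon\partial\dom\to(t,x)$ follows from the pointwise inequality $\tfrac{d}{dt}\Fg|_\Pchi\ge\tfrac14\Fv^2|_\Pchi+\Fv|_\Pchi\dot\Pchi=-\dot\Pchi^2+\tfrac14(\Fv|_\Pchi+2\dot\Pchi)^2\ge-\dot\Pchi^2$, valid a.e.\ on $\dom$ (and glued across the finitely many non-shock boundary crossings using continuity of $\Fg$).

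Next, I match $\Mnu$ with $\M[\Fg]$. For a shock path $\Pl\in\Sbdy'$, identity \eqref{e.Fg.dot..} gives $\dot{\Fg|_\Pl}+\dot\Pl^2=\tfrac{1}{16}(\Fv_{\Pl-}-\Fv_{\Pl+})^2$, which is precisely the density appearing in $\M[\Fg]$. For the two cone boundaries, the direct computation $\dot{\Fg|_\Pl}+\dot\Pl^2=\tfrac14(\Fv|_\Pl+2\dot\Pl)^2=0$ (using $\Fv|_\Pl=\Fuu|_\Pl=\mp2c$ and $\dot\Pl=\pm c$) shows they contribute nothing. Hence $\Mnu=\M[\Fg]$, and Lemma~\ref{l.match} yields $\Fg=\til{\Fh}$ on $(0,1]\times\R$. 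Unpacking \eqref{e.l.match.Fh} with $\und{s}=0$ and the convention $\Fg(0,\cdot)=-\infty\ind_{x\neq 0}$ forces $\Peta(0)=0$, so $\til{\Fh}(t,x)=\sup\{|\Peta|_{\Metric_{\M[\Fg]}}:(0,0)\xrightarrow{\Peta}(t,x)\}=\Hfn[\M[\Fg]](t,x)$, completing the proof.
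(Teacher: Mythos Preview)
Your approach is close in spirit to the paper's but has a structural gap in how you invoke Lemma~\ref{l.match}. That lemma requires $\Sbdy'$ to be a finite collection of piecewise $\Csp^1[\und{s},\bar{s}]$ paths that are mutually non-intersecting except at the endpoints $t=\und{s},\bar{s}$. With your choice $\und{s}=0$, $\bar{s}=1$, every element of $\Sbdy'$ must be a path defined on all of $[0,1]$, and distinct paths may not meet at any interior time. But the shock paths live in $\Sbdy(\Fv)$, which by Definition~\ref{d.tractable} consists of paths on the sub-intervals $[t_{i-1},t_i]$ of $\Stime(\Fv)$; shocks can be born, merge, or terminate at the interior times $t_i$. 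In general you cannot concatenate them into non-intersecting $[0,1]$-paths whose graphs stay inside $\graph(\Sbdy(\Fv))$ without including non-shock segments---and on such a segment the density $\dot{\Fg|_\Pl}+\dot\Pl^2=\tfrac14(\Fv|_\Pl+2\dot\Pl)^2$ need not vanish, which would break your identification $\Mnu=\M[\Fg]$.

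The paper sidesteps this by applying Lemma~\ref{l.match} \emph{inductively over the time slabs} $[t_{i-1},t_i]$: at step $i$ one takes $(\und{s},\bar{s})=(t_{i-1},t_i)$ and $\Sbdy'=\{\Pl\in\Sbdy(\Fv):[\start(\Pl),\ed(\Pl)]=[t_{i-1},t_i]\}$. On a single slab the boundary paths are automatically on $[t_{i-1},t_i]$ and non-intersecting except at the slab endpoints, so the lemma's hypotheses are met; the Hopf--Lax identity is then verified on each original smooth region $\SD\in\Sdom(\Fv)$ by a single characteristic that stays inside $\SD$, avoiding your gluing across non-shock boundaries. Your idea of keeping only shock paths in $\Sbdy'$ could also be implemented slab-by-slab and would have the advantage of yielding $\Mnu=\M[\Fg]\lfloor_{[t_{i-1},t_i]\times\R}$ by construction; the price is that the Hopf--Lax check must then be done on the larger merged regions, which your characteristic-extension argument does handle correctly.
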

\begin{proof}
We begin with a reduction.
Consider the corresponding tractable partition $(\{t_0<\ldots<t_k\},\Sbdy(\Fv),\Sdom(\Fv))$ of a light cone $\cone(c)$ for $\Fv$.
Let $\Sbdy_{i}(\Fv)$ denotes the set of those paths in $\Sbdy(\Fv)$ with starting time $t_{i-1}$ and ending time $t_{i}$, and let $\Sdom_{i}(\Fv)$ denotes the set of those regions in $\Sdom(\Fv)$ that are subsets of $(t_{i-1},t_{i})\times\R$.
We seek to apply Lemma~\ref{l.match} with $(\Stime,\Sbdy,\Sdom)=(\Stime(\Fv),\Sbdy(\Fv),\Sdom(\Fv))$ and $(\und{s},\bar{s},\Sbdy',\Sdom')=(t_{i-1},t_{i},\Sbdy_{i},\Sdom_{i})$ and inductively over $i=1,\ldots,|\Stime(v)|$.
Under this setup, the $\Mnu$ in \eqref{e.l.match.Mnu} is equal to $\M[\Fg]\lfloor_{[t_{i-1},t_i]\times\R}$.
Hence, once Lemma~\ref{l.match} applies, the desired result follows.

It hence remains only to verifies \eqref{e.l.match.hopflax}.
Take any $\SD\in\Sdom_{i}(\Fv)$,  recall $\HL_{\partial\SD}[\Fg]$ from \eqref{e.hopflax.domain}, and note that the paths there can be taken to be linear. 
We hence consider all linear paths $\Pchi$s that satisfy $\partial\SD\xrightarrow{\Pchi}(t_0,x_0)$ with $\graph(\Pchi|_{(\start(\Pchi),t_0)})\subset\SD$.
Differentiating $\Fg(t,\Pchi(t))$ with the aid of $\partial_t\Fg=\frac{1}{4}\Fv^2$ gives 
\begin{align}
	\label{e.p.reproducing.}
	\dot{\Fg|_{\Pchi}}=\tfrac{1}{4}(\Fv|_{\Pchi}+2\dot{\Pchi})^2-\dot{\Pchi}^2,
	\qquad
	\dot{\Pchi} = \text{constant.}
\end{align}
Because $\Fv$ is a $\Csp^1(\SD)$ solution of Burgers' equation, every point in $\SD$ lies along a characteristic and the characteristics do not intersect within $\SD$. 
Among all the $\Pchi$s considered above, one of them is a characteristic, denoted $\Pchi_0$.
Combining \eqref{e.p.reproducing.} and \eqref{e.characteristics} for $\Pchi=\Pchi_0$ gives $\Fg(t_0,x_0)=|\Pchi_0|_{\diri}+\Fg(\start_0,\Pchi_0(\start_0))$, where $\start_0:=\start(\Pchi_0)$.
On the other hand, for any $\Pchi\neq\Pchi_0$, we must have $\Fv|_{\Pchi}(t)\neq -2\dot{\Pchi}$ for all $t\in(\start(\chi),t_0)$.
Otherwise the characteristic that passes through $(t,\Pchi(t))$ is $\Pchi$ itself.
This is prohibited because $\Pchi$ intersects with $\Pchi_0$ at $(t_0,x_0)$.
Hence, for any $\Pchi\neq\Pchi_0$, we have $\dot{\Fg|_{\Pchi}}(t)<-\dot{\Pchi}^2$ for all $t\in(\start(\chi),t_0)$.
Integrating this inequality gives $\Fg(t_0,x_0)<|\Pchi|_{\diri}+\Fg(\start,\Pchi(\start))$, where $\start:=\start(\Pchi)$.
To recap, we showed that $\Fg(t_0,x_0)=|\Pchi_0|_{\diri}+\Fg(\start_0,\Pchi_0(\start_0))$ and that, for all $\Pchi\neq\Pchi_0$, $\Fg(t_0,x_0)<|\Pchi|_{\diri}+\Fg(\start,\Pchi(\start))$.
This proves that $\HL_{\partial\SD}[\Fg](t_0,x_0)=\Fg(t_0,x_0)$.
\end{proof}

Let us state a few monotone properties that will be used
They are readily checked from \eqref{e.Mmu}, \eqref{e.rate}, and \eqref{e.Hfn.Ufn}.
Hereafter, $\Mnu\leq\Mnu'$ means $\Mnu'-\Mnu\in\Msp_{+}$, and $\Mnu<\Mnu'$ means $0\neq (\Mnu'-\Mnu)\in\Msp_{+}$.
\begin{align}
	\label{e.monotone.weak}
	&\text{For } \Mnu\leq\Mnu'\in\Msp_{+}, \text{ we have } \rate(\Metric_{\Mnu})\leq\rate(\Metric_{\Mnu'}).
\\
	\label{e.monotone.height}
	&\text{For } \Mnu\leq\Mnu'\in\Msp_{+}, \text{ we have } \Hfn[\Mnu]\leq\Hfn[\Mnu'].
\\
	\label{e.monotone}
	&\text{for } \Mnu<\Mnu'\in\Msp_{+}, \text{ we have } \rate(\Metric_{\Mnu})<\rate(\Metric_{\Mnu'}).
\end{align}

\subsection{Piecewise linear construction}
\label{s.tools.pwlinear}
We next turn to the setting of Proposition~\ref{p.pwlinear.}.
Namely, we seek to establish properties of $\Hfn[\Mmu]$ and $\partial_{x}\Hfn[\Mmu]$ for a piecewise linear and cone supported $\Mmu$.
Let us begin by considering a more special $\Mmu$.
Recall the function $\FA_{\alpha,\beta}$ from \eqref{e.FA}--\eqref{e.FA.}.
\begin{lem}\label{l.pwlinear1}
Consider $\Mmu=\sum_{\Pgamma\in\Snet}\dens_{\Pgamma}\delta_{\Pgamma}$ where $\Snet$ consists of finitely many paths (that intersect only at $t=0$) of the form $\Pgamma(t)=\dot{\Pgamma}\,t$ with $\dot{\Pgamma}=$constant, and each $\dens_{\Pgamma}\geq 0$ is constant, and let $\Fh:=\Hfn[\Mmu]$ and $\Fu:=\partial_{x}\Fh$.
Then 
\begin{align}
	\label{e.l.pwlinear1}
	\Fh
	= 
	\max_{\Pgamma\in\Snet} \FA_{\dot{\Pgamma},\dens_{\Pgamma}},
\end{align}
$\Fu$ is a piecewise linear solution on $[0,1]\times\R$, and 
\begin{enumerate}%[leftmargin=20pt,label=(\alph*)]
\item \label{l.pwlinear1.shock}
any $\Pl\in\Sbdy(\Fu)$ can have non-entropy shocks only if $\graph(\Pl)\subset\graph(\Pgamma)$ for some $\Pgamma\in\Snet$ and $\Fu_{\Pl\pm}=-2(\dot{\Pgamma}\pm\sqrt{\dens_{\Pgamma}})=$ constant.
\end{enumerate}
\end{lem}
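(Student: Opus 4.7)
The plan is to first establish the explicit formula \eqref{e.l.pwlinear1} for $\Fh = \Hfn[\Mmu]$; the structural claims about $\Fu$ being a piecewise linear solution and the classification of its non-entropy shocks then follow by direct inspection. The easy inequality $\Hfn[\Mmu] \geq \max_\Pgamma \FA_{\dot\Pgamma, \dens_\Pgamma}$ follows from the monotonicity \eqref{e.monotone.height} applied to $\Mmu \geq \dens_\Pgamma \delta_\Pgamma$ for each $\Pgamma \in \Snet$, together with Example~\ref{ex.single.}, which identifies $\Hfn[\dens_\Pgamma \delta_\Pgamma] = \FA_{\dot\Pgamma, \dens_\Pgamma}$.

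For the opposite inequality I would fix $(t, x)$ and an admissible $\eta : [0, t] \to \R$ with $\eta(0)=0$ and $\eta(t)=x$. The first step is a reduction to piecewise-linear competitors with finitely many pieces: on each maximal sub-interval of $[0, t]$ where $\eta$ avoids $\cup_\Pgamma \graph(\Pgamma)$, I replace $\eta$ by its linear interpolation, which by Jensen's inequality cannot decrease $|\eta|_{\Metric_\Mmu}$. Since distinct $\Pgamma$s meet only at the origin, each remaining contact component is then an interval on which $\eta$ coincides with a single $\Pgamma$, and a truncation-plus-interpolation approximation (keeping only the largest finitely many contact intervals) further reduces to piecewise-linear $\eta$ with finitely many pieces $I_1, \ldots, I_m$ of length $a_j$ and slope $v_j$, each piece either coinciding with some $\Pgamma_{i(j)}$ (so $v_j=\dot\Pgamma_{i(j)}$ and mass $\dens_{\Pgamma_{i(j)}} a_j$ is collected) or free. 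The length
\begin{align*}
	|\eta|_{\Metric_\Mmu} = \sum_{j \text{ coincide}} \bigl(\dens_{\Pgamma_{i(j)}} - v_j^2\bigr) a_j - \sum_{j \text{ free}} v_j^2 a_j
\end{align*}
is then linear in $(a_j)$ subject to the two affine constraints $\sum a_j = t$ and $\sum v_j a_j = x$, so for any fixed slopes the maximum over $a_j \geq 0$ is attained at an extreme point of support at most $2$. Enumerating the support-$\leq 2$ configurations together with the continuity constraint at the piece boundaries rules out two distinct coinciding pieces (which would force $\dot\Pgamma_1=\dot\Pgamma_2$) and any free-then-coincide ordering (which forces the free slope to equal $\dot\Pgamma$ and merges the pieces), leaving only (i) a single free piece of length $-x^2/t$, or (ii) an initial $\Pgamma$-coinciding segment on $[0, s]$ followed by a free linear segment on $[s, t]$, whose supremum over $s$ is exactly $\FA_{\dot\Pgamma, \dens_\Pgamma}(t, x)$. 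Since $-x^2/t = \Fhh(t, x) \leq \FA_{\dot\Pgamma, \dens_\Pgamma}(t, x)$, the formula \eqref{e.l.pwlinear1} follows. I expect the main technical obstacle to be the finite-piece reduction itself, as an arbitrary $\eta\in\Hsp^1$ may touch $\cup_\Pgamma \graph(\Pgamma)$ on a set with infinitely many components, requiring a careful limiting argument to transfer the bound to the original $\eta$.

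For the remaining claims, I would argue directly from the explicit formula $\Fh = \max_\Pgamma \FA_{\dot\Pgamma, \dens_\Pgamma}$. Each $\FA_{\alpha, \beta}(t, x) = t \FA_{\alpha, \beta}(1, x/t)$ is continuous, piecewise affine in $(x, t)$ on the two inner wedges about $\{x = \alpha t\}$, and coincides with $\Fhh = -x^2/t$ outside those wedges; its $x$-derivative is piecewise either a constant $-2(\alpha \pm \sqrt\beta)$ or of the form $-2x/t$, exactly matching Definition~\ref{d.pwlinear}\eqref{d.pwlinear.domain}. A finite max inherits this piecewise structure, with smooth regions $\SD$ being those where a single $\FA$ strictly dominates. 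The boundary paths are either (a) portions of a ray $\Pgamma \in \Snet$ within the region where $\FA_{\dot\Pgamma, \dens_\Pgamma}$ attains the max, along which the internal non-entropy shock of that $\FA$ persists with $\Fu_{\Pl\pm} = -2(\dot\Pgamma \pm \sqrt{\dens_\Pgamma})$, or (b) curves where the identity of the dominant $\FA$ switches between two smooth $\FA$s, cut out as zero sets of polynomials in $(t, x)$ and hence of the form \eqref{e.pwlinear.Pl}. On a type-(b) curve one has $\FA_1 > \FA_2$ on one side and $\FA_1 < \FA_2$ on the other, forcing $\partial_x(\FA_1 - \FA_2) \leq 0$ at the crossing and hence $\Fu_{\Pl-} \leq \Fu_{\Pl+}$; such a boundary carries at most an entropy shock. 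Consequently, non-entropy shocks of $\Fu$ arise only along rays in $\Snet$ with the asserted trace values, yielding Part~\ref{l.pwlinear1.shock}.
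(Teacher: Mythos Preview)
Your lower bound $\Hfn[\Mmu]\geq\max_{\Pgamma}\FA_{\dot\Pgamma,\dens_\Pgamma}$ via monotonicity matches the paper, and your reading of the shock structure from the explicit formula is sound. The gap is in the upper bound $\Hfn[\Mmu]\leq\max_{\Pgamma}\FA_{\dot\Pgamma,\dens_\Pgamma}$, specifically the linear-programming step.

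You relax to an LP with only the two constraints $\sum_j a_j=t$ and $\sum_j v_j a_j=x$, discarding the continuity requirement that a coinciding piece on a ray $\Pgamma$ must begin at a point of $\graph(\Pgamma)$. The relaxed LP is an upper bound for $|\Peta|_{\Metric_\Mmu}$ and has extreme points of support at most $2$, but you then invoke continuity to ``rule out'' the case of two distinct coinciding pieces. That is not valid: continuity was dropped in the relaxation, so nothing prevents the relaxed maximum from being attained at exactly such a configuration. Concretely, take $\Snet=\{\Pgamma_1,\Pgamma_2\}$ with $\dot\Pgamma_1=-1$, $\dot\Pgamma_2=1$, $\dens_{\Pgamma_1}=\dens_{\Pgamma_2}=2$, and $(t,x)=(1,0)$. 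Any continuous $\Peta$ touching both rays has the two coinciding types among its pieces, and your LP then has the extreme point $a_1=a_2=\tfrac12$ on those two types, with value $(2-1)\tfrac12+(2-1)\tfrac12=1$. But $\max_\Pgamma\FA_{\dot\Pgamma,2}(1,0)=(1-\sqrt2)^2=3-2\sqrt2<1$, so the relaxed bound overshoots and the argument fails. If instead you retain continuity, each coinciding piece adds a linear constraint $\sum_{k<j}(v_k-\dot\Pgamma_{i(j)})a_k=0$, the extreme points have support up to $2+k$, and the enumeration no longer closes.

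The paper's route to the upper bound is different and avoids path-by-path optimization. It sets $\Fg:=\max_\Pgamma\FA_{\dot\Pgamma,\dens_\Pgamma}$, verifies directly that $\partial_x\Fg$ is a piecewise linear solution with the stated shock structure, and then applies the matching Lemma~\ref{l.match} with the rays in $\Snet$ as the boundary paths $\Sbdy'$. The two inputs to that lemma are (i) the domain Hopf--Lax identity $\HL_{\partial\dom}[\Fg]=\Fg$ on each wedge between consecutive rays, which holds because each $\FA_{\dot\Pgamma,\dens_\Pgamma}$ satisfies it and $\HL_{\partial\dom}$ commutes with $\max$; and (ii) the inequality $\dot{\Fg|_\Pgamma}+\dot\Pgamma^2\geq\dens_\Pgamma$ for every $\Pgamma\in\Snet$, which shows that the measure $\Mnu$ built from $\Fg$ dominates $\Mmu$. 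Lemma~\ref{l.match} then yields $\Fg=\Hfn[\Mnu]\geq\Hfn[\Mmu]$.
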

\begin{proof}
We will first construct a function $\Fg$, verify that $\Fv=\partial_x\Fg$ is a piecewise linear solution and satisfies Property~\eqref{l.pwlinear1.shock} for $\Fu\mapsto\Fv$, and then show that $\Fg=\Hfn[\Mmu]$.

\smallskip
\noindent\textbf{Step~1.\ Constructing $\Fg$ and verifying the desired properties.}
Simply define
\begin{align}
	\label{e.l.pwlinear1.Fg}
	\Fg:=\max_{\Pgamma\in\Snet} \FA_{\dot{\Pgamma},\dens_{\Pgamma}}.
\end{align}
Recall from \eqref{e.FA}--\eqref{e.FA.} that $\FA_{\alpha,\beta}(t,x):=t\FA_{\alpha,\beta}(1,x/t)$ and that $\FA_{\alpha,\beta}(1,\cdot)$ is continuous, piecewise linear or quadratic, and coincides with $\Fhh(1,\cdot)$ outside of a bounded interval. 
From these properties, it is straightforward to check that $\Fv:=\partial_x\Fg$ is a piecewise linear solution on $[0,1]\times\R$ and satisfies Property~\eqref{l.pwlinear1.shock} for $\Fu\mapsto\Fv$.

\smallskip
\noindent\textbf{Step~2.\ Showing that $\Fg\leq\Hfn[\Mmu]$.}
Recall $\Mnu_{\alpha,\beta}$ from Example~\ref{ex.single.} and recall that $\FA_{\alpha,\beta} = \Hfn[\Mnu_{\alpha,\beta}]$.
Our $\Mmu$ here is $\sum_{\Pgamma\in\Snet}\Mnu_{\dot{\Pgamma},\dens_{\Pgamma}}$.
For each $\Pgamma\in\Snet$, using \eqref{e.monotone.height} for $\Mnu=\Mnu_{\dot{\Pgamma},\dens_{\Pgamma}}$ and $\Mnu'=\Mmu$ gives $\FA_{\dot{\Pgamma},\dens_\Pgamma} \leq \Hfn[\Mmu]$.
Taking the max over $\Pgamma\in\Snet$ in this inequality gives $\Fg\leq\Hfn[\Mmu]$.

\smallskip
\noindent\textbf{Step~3.\ Showing that $\Fg\geq\Hfn[\Mmu]$.}
The result of Step~1 asserts that $\Fv$ is a piecewise linear solution. 
Let $(\Stime(\Fv),\Sbdy(\Fv),\Sdom(\Fv))$ be the corresponding tractable partition of a light cone $\cone(c)$.
To prove $\Fg\geq\Hfn[\Mmu]$, we apply Lemma~\ref{l.match} with $(\Stime,\Sbdy,\Sdom)=(\Stime(\Fv),\Sbdy(\Fv),\Sdom(\Fv))$ and $(\und{s},\bar{s},\Sbdy',\Sdom')=(0,1,\Snet\cup\{\Pl_{\textL},\Pl_{\textR}\},\Sdom')$, where $\Pl_\textL$ and $\Pl_\textR$ denotes the left and right boundaries of $\cone(c)$, and $\Sdom'$ consists of the open connected region thus partitioned.
In Step~3-1 below, we will verify that our $\Fg$ in \eqref{e.l.pwlinear1.Fg} satisfies \eqref{e.l.match.hopflax}.
In Step~3-2 below, we will show that the corresponding $\Mnu$, defined in \eqref{e.l.match.Mnu}, satisfies $\Mnu \geq \Mmu$.
Under the current setup, it is readily checked that the $\til{\Fh}$ defined in \eqref{e.l.match.Fh} is equal to $\Hfn[\Mnu]$.
Hence, once Steps~3-1 and 3-2 are completed, $\Fg=\til{\Fh}=\Hfn[\Mnu]\geq \Hfn[\Mmu]$.

\smallskip
\noindent\textbf{Step~3-1.\ Verifying that $\Fg$ satisfies \eqref{e.l.match.hopflax}.}
Recall $\Mnu_{\alpha,\beta}$ from Example~\ref{ex.single.} and recall that $\FA_{\alpha,\beta} = \Hfn[\Mnu_{\alpha,\beta}]$.
From this, it is not hard to check that $\HL_{\partial\dom}[\FA_{\alpha,\beta}]=\FA_{\alpha,\beta}$ on $\dom$, for any $\dom$ of the form $\{(t,x): \alpha' t < x<\alpha t\}$ or $\{(t,x): \alpha t < x<\alpha' t\}$ where $\alpha'\in\R$.
Hence $\HL_{\partial\dom}[\FA_{\dot{\Pgamma},\dens_{\Pgamma}}]=\FA_{\dot{\Pgamma},\dens_{\Pgamma}}$ on $\dom$ for every $\dom\in\Sdom'$.
Also, as is readily checked from \eqref{e.hopflax.domain}, the operator $\HL_{\partial\dom}$ commutes with $\max$, namely $\max\HL_{\partial\dom}[\,\cdot\,] = \HL_{\partial\dom}[\max \,\cdot\,]$.
Combining the last two properties shows that $\Fg$ satisfies \eqref{e.l.match.hopflax}.

\smallskip
\noindent\textbf{Step~3-2.\ Showing that $\Mnu \geq \Mmu$.}
This amounts to showing $\dot{\Fg|_{\Pgamma}}+\dot{\Pgamma}^2 \geq \dens_\Pgamma$, for all $\Pgamma\in\Snet$.
Recall that $\Pgamma(t)=\dot{\Pgamma}\cdot t$ and that $\FA_{\alpha,\beta}(t,\alpha t)=t\FA_{\alpha,\beta}(1,\alpha)$.
Hence $(\Fg|_{\Pgamma})(t)$ and $(\FA_{\dot{\Pgamma},\dens_{\Pgamma}}|_{\Pgamma})(t)$ are linear in $t$ and take value $0$ at $t=0$.
This together with \eqref{e.l.pwlinear1.Fg} implies $\dot{\Fg|_{\Pgamma}} \geq \frac{\d~}{\d t}({\FA_{\dot{\Pgamma},\dens_{\Pgamma}}}|_{\Pgamma})$.
Explicit calculations give $\frac{\d~}{\d t}(\FA_{\alpha,\beta}(t,\alpha t)) = \FA_{\alpha,\beta}(1,\alpha)=\beta-\alpha^2$.
Hence $\dot{\Fg|_{\Pgamma}} \geq \dens_{\Pgamma}-\dot{\Pgamma}^2$.
\end{proof}

We next consider piecewise linear and cone-supported $\Mmu$.

\begin{lem}
\label{l.pwlinear}
For any piecewise linear and cone-supported $\Mmu=\sum_{\Pgamma\in\Snet} \dens_{\Pgamma}\,\delta_{\Pgamma}$  (Definition~\ref{d.pwlinear.measure}),
the function $\Fu=\partial_{x}\Hfn[\Mmu]$, is a piecewise linear solution on $[0,1]\times\R$, and
\begin{enumerate}%[leftmargin=20pt,label=(\alph*)]
\item \label{l.pwlinear.shock}
any $\Pl\in\Sbdy(\Fu)$ can have non-entropy shocks only if $\graph(\Pl)\subset\graph(\Pgamma)$ for some $\Pgamma\in\Snet$ and $\Fu_{\Pl\pm}=-2(\dot{\Pgamma}\pm\sqrt{\dens_{\Pgamma}})=$ constant.
\end{enumerate}
\end{lem}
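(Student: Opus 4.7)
The plan is to follow the three-step template of the proof of Lemma~\ref{l.pwlinear1}, with appropriate modifications to handle paths in $\Snet$ that need not emanate from the origin. The chief new issue is that a geodesic for $\Metric_{\Mmu}$ may now visit several paths in $\Snet$ consecutively rather than just one, so the simple formula $\Fg = \max_{\Pgamma} \FA_{\dot\Pgamma, \dens_\Pgamma}$ no longer suffices and has to be replaced by an inductive construction.

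First I would set up the critical times $\Stime_0 := \{0, 1\} \cup \{\start(\Pgamma), \ed(\Pgamma) : \Pgamma \in \Snet\} = \{0 = t_0 < \ldots < t_N = 1\}$ and, for each $i$, the set $\Snet_i := \{\Pgamma \in \Snet : [t_{i-1}, t_i] \subset [\start(\Pgamma), \ed(\Pgamma)]\}$ of paths active on $[t_{i-1}, t_i]$. Since the paths in $\Snet$ are linear and internally disjoint, those in $\Snet_i$ are pairwise non-intersecting on $(t_{i-1}, t_i)$. Build a candidate $\Fg$ inductively across these sub-intervals: on $[0, t_1]$, the cone constraint forces every $\Pgamma \in \Snet_1$ to pass through $(0,0)$, so Lemma~\ref{l.pwlinear1} applies verbatim; on each subsequent $[t_{i-1}, t_i]$, one invokes a shifted analog of Lemma~\ref{l.pwlinear1} with initial time $t_{i-1}$, piecewise linear/quadratic initial data $\Fg(t_{i-1},\cdot)$ inherited from the previous step, and linear paths $\Snet_i$ emanating from arbitrary points. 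Because the Hopf--Lax evolution of piecewise linear/quadratic data and the pointwise maximum with translated wedges both preserve the piecewise linear/quadratic structure (compare Section~\ref{s.tools.hopflax} and Lemma~\ref{l.pwlinear1}), $\Fv := \partial_x \Fg$ is a piecewise linear solution on $[0,1]\times\R$.

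Steps~(ii) and (iii) then proceed in close parallel with Lemma~\ref{l.pwlinear1}. For step~(ii), the inductive construction exhibits, for each $(t,x)$, a valid competitor path in the variational definition \eqref{e.Mmu} of $\Metric_{\Mmu}(0, 0; t, x)$ whose weighted length equals $\Fg(t,x)$, giving $\Fg \le \Hfn[\Mmu]$. For step~(iii), apply Lemma~\ref{l.match} with $\Sbdy'$ consisting of $\Snet$, the cone boundaries, and the additional boundary paths produced by the shear-and-cut evolution of $\Fg(t_{i-1},\cdot)$ on each sub-interval: verify (a) $\HL_{\partial\dom}[\Fg] = \Fg$ on each smooth region $\dom\in\Sdom'$, which follows because $\Fg$ is a classical $\Csp^2$ Hamilton--Jacobi solution there, and (b) the induced measure $\Mnu$ of \eqref{e.l.match.Mnu} dominates $\Mmu$, by establishing $\dot{\Fg|_{\Pgamma}} + \dot{\Pgamma}^2 \ge \dens_{\Pgamma}$ for each $\Pgamma\in\Snet$ using the same comparison with wedges as in Step~3-2 of Lemma~\ref{l.pwlinear1}. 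Property~\eqref{l.pwlinear.shock} can then be read off sub-interval by sub-interval: on each $[t_{i-1},t_i]$, non-entropy shocks originate only at the spines of the wedges contributed by $\Snet_i\setminus\Snet_{i-1}$ (Lemma~\ref{l.pwlinear1}\eqref{l.pwlinear1.shock}), and the shear-and-cut picture shows that subsequent Hopf--Lax evolution propagates these shocks without creating new ones.

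The main obstacle will be establishing a sufficiently robust analog of Lemma~\ref{l.pwlinear1} on $[t_{i-1},t_i]$ that permits non-wedge initial data at $t_{i-1}$ and paths $\Pgamma\in\Snet_i$ whose starting points $(t_{i-1},\Pgamma(t_{i-1}))$ lie anywhere in the cone. In particular, carrying Step~3-1 through requires the boundary paths on $[t_{i-1},t_i]$ to remain of the form \eqref{e.pwlinear.Pl}, which in turn demands a careful analysis of how the shear-and-cut operation interacts with the (possibly non-wedge) initial data $\Fg(t_{i-1},\cdot)$ inherited from the previous sub-interval.
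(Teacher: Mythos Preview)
Your overall architecture matches the paper's: induct over time slabs $[\tau_{i-1},\tau_i]$, build a candidate $\Fg$ on each slab as a maximum of the Hopf--Lax evolution $\fPhi:=\HL_{\tau_{i-1}\to t}[\Fg(\tau_{i-1},\cdot)]$ and certain wedge-like contributions from the active $\Pgamma\in\Snet_i$, then apply Lemma~\ref{l.match}. However, there is a genuine gap in your Step~(iii)(b), and you have misidentified where the real difficulty lies.

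You claim that $\dot{\Fg|_{\Pgamma}}+\dot{\Pgamma}^2\ge\dens_{\Pgamma}$ follows ``using the same comparison with wedges as in Step~3-2 of Lemma~\ref{l.pwlinear1}.'' That argument, however, hinges on the fact that both $\Fg|_{\Pgamma}(t)$ and $\FA_{\dot\Pgamma,\dens_\Pgamma}|_{\Pgamma}(t)$ are \emph{linear in $t$ and vanish at $t=0$}, so the pointwise bound $\Fg\ge\FA_{\dot\Pgamma,\dens_\Pgamma}$ immediately gives a derivative bound. Once the initial data at $\tau_{i-1}$ is a general piecewise linear/quadratic profile, $\fPhi|_{\Pgamma}$ is no longer linear, and there can be stretches where $\dot{\fPhi|_{\Pgamma}}>\dens_{\Pgamma}-\dot{\Pgamma}^2$ followed by stretches where it drops below. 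On the latter stretches the translated wedge you propose (the analogue of $\FA$) still lies \emph{below} $\fPhi$, so taking the max does nothing and $\dot{\Fg|_{\Pgamma}}+\dot{\Pgamma}^2<\dens_{\Pgamma}$ can occur. The paper flags exactly this failure (see the discussion around \eqref{e.p.pwlinear.fPsii}) and repairs it with a nontrivial ``linear-cut'' construction: define $\Fpsi_{\Pgamma}$ as in \eqref{e.Fpsi}, identify the finitely many intervals $(\start_j,\ed_j)$ on which $\Fpsi_{\Pgamma}$ departs from $\fPhi|_{\Pgamma}$, and activate a wedge $\fPsi_{\Pgamma j}$ only on those intervals. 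Verifying $\Mnu\ge\Mmu$ then requires a separate two-case argument (Cases~1 and~2 in the paper's Step~3-2), including the situation where another path $\Pzeta\neq\Pgamma$ dominates along $\Pgamma$.

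By contrast, the issue you single out as the ``main obstacle''---that boundary paths on $[\tau_{i-1},\tau_i]$ stay of the form \eqref{e.pwlinear.Pl}---is comparatively routine and is not where the proof's weight sits; Step~3-1 in the paper is dispatched in a couple of lines once the right $\fPsi_{\Pgamma j}$ are in hand.
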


\begin{proof}
To set up the proof, first, without loss of generality, assume that there exist $0=\tau_0<\ldots<\tau_{k}=1$ such that each $\Pgamma\in\Snet$ satisfies $[\start(\Pgamma),\ed(\Pgamma)]=[\tau_{i-1},\tau_{i}]$, for some $i$, and that the $\Pgamma$s do not intersect except at $t=\tau_0,\ldots,\tau_{k}$.
Since $\Snet$ is finite, this can always be achieved by cutting the paths shorter.
Let $\Snet_{i}:=\{ \Pgamma\in\Snet : [\start(\Pgamma),\ed(\Pgamma)]=[\tau_{i-1},\tau_{i}] \}$.

Let us use induction in $i$. For $i=1$, Lemma~\ref{l.pwlinear1} gives the desired result.
Fix $i\geq 2$, assume the induction hypothesis that $\Fu$ is a piecewise linear solution on $[0,\tau_{i-1}]\times\R$, and set
\begin{align}
	\label{e.fPhi}
	\fPhi(t,x)
	&:=
	\HL_{\tau_{i-1}\to t}[\Fh(\tau_{i-1},\cdot)](x),
	\qquad
	(t,x)\in[\tau_{i-1},\tau_{i}]\times\R.
\end{align}
By Lemma~\ref{l.hopflax}\eqref{l.hopflax.} and the induction hypothesis, $\partial_x\fPhi$ is a piecewise linear solution on $[\tau_{i-1},\tau_{i}]\times\R$ without non-entropy shocks.

To implement the $i$th induction step, we follow the same strategy as the proof of Lemma~\ref{l.pwlinear1}.
We will first construct a function $\Fg\in\Csp([\tau_{i-1},\tau_{i}]\times\R)$, verify that $\Fv=\partial_x\Fg$ is a piecewise linear solution on $[\tau_{i-1},\tau_{i}]\times\R$ and satisfies Property~\eqref{l.pwlinear.shock} for $\Fu\mapsto\Fv$, and then show that $\Fg=\Hfn[\Mmu]$.

\smallskip
\noindent\textbf{Step~1.\ Constructing $\Fg$ and verifying the desired properties.}
We begin by motivating the construction of $\Fg$, which may otherwise seem obscure.
First, note that when $\Snet_{i}=\emptyset$, taking $\Fg=\fPhi$ completes the $i$th induction step: Indeed, $\partial_x\fPhi$ is a piecewise linear solution on $[\tau_{i-1},\tau_{i}]\times\R$ without non-entropy shocks, and $\fPhi=\Hfn[\Mmu]$ on $[\tau_{i-1},\tau_{i}]\times\R$ when $\Snet_{i}=\emptyset$.
In general, $\Snet_{i}\neq\emptyset$, and we construct $\Fg$ as
%\begin{align}
%	\label{e.p.pwlinear.Fg}
$
\Fg:=\fPhi \vee \max\{ \fPsi_{\Pgamma} : \Pgamma\in\Snet_{i}\},
$
%\end{align}
for a suitable $\fPsi_{\Pgamma}(t,x)$ that ``records the action generated by $\Mmu$ along the path $\Pgamma$''.
Consider the restriction of $\Mmu$ onto $\graph(\Pgamma)$: $\Mmu\lfloor_{\graph(\Pgamma)}= \dens_{\Pgamma}\, \delta_{\Pgamma}$.
A natural choice for $\fPsi_{\Pgamma}$ could be
\begin{align}
	\label{e.p.pwlinear.fPsii}
	\Metric_{\Mmu|_{\graph(\Pgamma)}}(\tau_{i-1},\Pgamma(\tau_{i-1});t,x) + \fPhi|_{\Pgamma}(\tau_{i-1}).
\end{align}
Note that, at $(t,x)=(\tau_{i-1},\Pgamma(\tau_{i-1}))$, the first term in \eqref{e.p.pwlinear.fPsii} is equal to $0$, so adding the constant $\fPhi|_{\Pgamma}(\tau_{i-1})$ shifts it to the same value as $\fPhi$.
The choice \eqref{e.p.pwlinear.fPsii}, however, does not work.
Later in Step~3-2, we will need the property $\dot{\Fg|_{\Pgamma}}+\dot{\Pgamma}^2\geq\dens_{\Pgamma}$.
If we choose \eqref{e.p.pwlinear.fPsii} for $\fPsi_{\Pgamma}$, the resulting $\Fg$ will not satisfy the property.
With this in mind, we will construct $\fPsi_{\Pgamma}$ as a variant of \eqref{e.p.pwlinear.fPsii} geared toward satisfying the property.

We now complete the construction of $\Fg$ by constructing $\fPsi_{\Pgamma}$.
Consider
\begin{align}
\label{e.Fpsi}
\begin{split}
	\Fpsi_{\Pgamma}(t)
	:=
	&\sup  \big\{ \, (\fPhi|_{\Pgamma})(s) - (s-\tau_{i-1})(\dens_{\Pgamma}-\dot{\Pgamma}^2)
	\, : \, 
	s\in[\tau_{i-1},t] \big\} 
	\\
	& + (t-\tau_{i-1})(\dens_{\Pgamma}-\dot{\Pgamma}^2),
	\qquad
	t\in[\tau_{i-1},\tau_{i}].
\end{split}
\end{align}
In words, the function $\Fpsi_{\Pgamma}$ follows $\fPhi|_{\Pgamma}$ as long as the condition $\dot{\fPhi|_{\Pgamma}} \geq \dens_{\Pgamma}-\dot{\Pgamma}^2$ holds, and when the condition fails, $\Fpsi_{\Pgamma}$ makes a ``linear cut'' with velocity $\dens_{\Pgamma}-\dot{\Pgamma}^2$.
See Figure~\ref{f.Fpsi} for an illustration.
By Lemma~\ref{l.hopflax}\eqref{l.hopflax..}, $\fPhi|_{\Pgamma}(t)$ is continuous and piecewise rational.
This being the case, $\Fpsi_{\Pgamma}$ and $\fPhi|_{\Pgamma}$ differ only on finitely many disjoint intervals $(\start_1,\ed_1)$, \ldots, $(\start_M,\ed_{M})$, or $(\start_{1},\ed_{1})$, \ldots, $(\start_{M},\ed_{M}]$ with $\ed_{M}=\tau_{i}$.
The $\start$s, $\ed$s, and $M$ depend on $\Pgamma$, but to simplify notation we omit the dependence.
The idea is to only ``activate'' $\Mmu\lfloor_{\graph(\Pgamma)}$ on these intervals.
Let $\Pgamma_j:=\Pgamma|_{[\start_j,\ed_j]}$, consider the restriction of $\Mmu$ onto $\graph(\Pgamma_j)$, $\Mmu_{\Pgamma j}:=\dens_{\Pgamma}\, \delta_{\Pgamma_j}$, and let 
\begin{align}
	\label{e.fPsi.j}
	\fPsi_{\Pgamma j}(t,x)
	:=
	\Metric_{\Mmu_{\Pgamma j}}(\start_{j},\Pgamma(\start_{j});t,x) + \fPhi|_{\Pgamma}(\start_{j}).
\end{align}
Again, we add the constant $\fPhi|_{\Pgamma}(\start_{j})$ to make $\fPsi_{\Pgamma j}$ equal to $\fPhi$ at $(t,x)=(\start_{j},\Pgamma(\start_{j}))$.
This defines $\fPsi_{\Pgamma j}$ for $(t,x)\in(\start_j,\tau_{i}]\times\R$ and $(t,x)=(\start_{j},\Pgamma(\start_{j}))$, and we set $\fPsi_{\Pgamma j}(t,x):=-\infty$ for other values of $(t,x)\in[\tau_{i-1},\tau_{i}]\times\R$.
Finally, let
\begin{align}
	\label{e.fPsi}
	\fPsi_{\Pgamma} := \fPsi_{\Pgamma 1}\vee\cdots\vee	\fPsi_{\Pgamma M},
	\qquad
	\Fg:=\fPhi \vee \big\{ \fPsi_{\Pgamma} : \Pgamma\in\Snet_{i} \},
\end{align}
with the convention that $\fPsi_{\Pgamma}:=-\infty$ when $M=0$.

\begin{figure}
\centering
\fbox{\includegraphics[width=.8\linewidth]{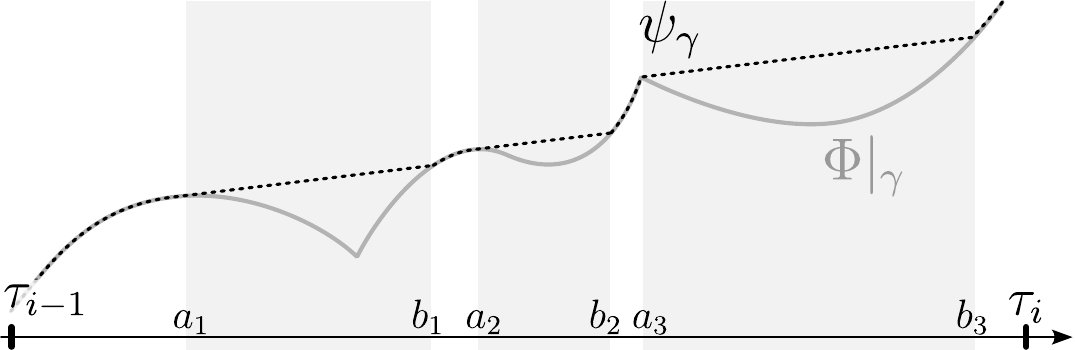}}
\caption{%
	The graphs of $\fPhi|_{\Pgamma}$ and $\Fpsi_{\Pgamma}$.
	The linear segments have velocity/slope $\dens_{\Pgamma}-\dot{\Pgamma}^2$. 
}
\label{f.Fpsi}
\end{figure}

Before moving on, let us derive some explicit formulas for $\fPsi_{\Pgamma j}$.
The function is described by $\Metric_{\Mmu_{\Pgamma j}}$ geodesics that start from $(\start_j,\Pgamma(\start_j))$.
Since $\dot{\Pgamma}$ and $\dens_{\Pgamma}$ are constant, these geodesics can be found explicitly, which we next describe.
Let
\begin{align}
	\label{e.fPsi.alpha}
	\alpha_{\Pgamma\pm} := \dot{\Pgamma} \pm \sqrt{\dens_{\Pgamma}}\ .
\end{align}
In Figure~\ref{f.fPsij}, the middle path is $\Pgamma|_{[\start_j,\ed_j]}$, and the other four paths are linear with velocities $\alpha_{\Pgamma-}$ and $\alpha_{\Pgamma+}$ and starting points $(\start_j,\Pgamma(\start_j))$ and $(\ed_j,\Pgamma(\ed_j))$.
These paths partition $(\tau_{i-1},\tau_{i})\times\R$ into the open regions $\SS_{\Pgamma j}$, $\SS_{\Pgamma j}'$, $\SR_{\Pgamma j-}$, and $\SR_{\Pgamma j+}$ depicted in Figure~\ref{f.fPsij}.
Now consider the $\Metric_{\Mmu_{\Pgamma j}}$ geodesic that connects $(\start_j,\Pgamma(\start_j))$ to $(t,x)$.
For $(t,x)\in\SS_{\Pgamma j}\cap((\start_j,\tau_{i}]\times\R)$ the geodesic is linear.
Further,
\begin{align}
	\label{e.fPsi.S.sup}
	\fPsi_{\Pgamma j}(t,x) 
	&= 
	\sup \big\{ |\Pchi|_{\diri} : (\start_j,\Pgamma(\start_j)) \xrightarrow{\Pchi} (t,x) \big\} + \fPhi|_\Pgamma(\start_j)
	\\
	\label{e.fPsi.S.formula}
	&= 
	-\tfrac{(x-\Pgamma(\start_j))^2}{(t-\start_j)_+}  + \fPhi|_\Pgamma(\start_j),
	\qquad
	(t,x)\in\SS_{\Pgamma j},
\end{align}
where $(\cdot)_+:=\max\{\cdot,0\}$.
When $(t,x)\in\SS_{\Pgamma j}\cap((\start_j,\tau_{i}]\times\R)$, the expressions in \eqref{e.fPsi.S.sup}--\eqref{e.fPsi.S.formula} follow from linearity of the geodesic, and
these expressions extend to all $(t,x)\in\SS_{\Pgamma j}$ under the conventions $\sup\emptyset=-\infty$, $0/0:=0$, and $\alpha^2/0:=+\infty$ for $\alpha\neq 0$.
Next, for $(t,x)\in\SS'_{\Pgamma j}$ the geodesic follows $\Pgamma$ until time $\ed_j$ and then becomes another linear path.
When following $\Pgamma$, the geodesic sees a change of its length by $(\ed_j-\start_j)(\dens_{\Pgamma}-\dot{\Pgamma}^2)$.
Also, by the definitions of $\start_{j}$, $\ed_{j}$, and $\Fpsi_{\Pgamma}$, we have $(\ed_j-\start_j)(\dens_{\Pgamma}-\dot{\Pgamma}^2)+\fPhi|_{\Pgamma}(\start_j)=\Fpsi_{\Pgamma}(\ed_j)$; see Figure~\ref{f.Fpsi}.
Therefore,
\begin{align}
	\label{e.fPsi.S'.sup}
	\fPsi_{\Pgamma j}(t,x) 
	&= 
	\sup \big\{ |\Pchi|_{\diri} + \Fpsi_{\Pgamma}(\ed_j) : (\ed_j,\Pgamma(\ed_j)) \xrightarrow{\Pchi} (t,x) \big\}
	\\
	\label{e.fPsi.S'.formula}
	&= 
	-\tfrac{(x-\Pgamma(\ed_j))^2}{t-\ed_j} + \Fpsi_{\Pgamma}(\ed_j),
	\qquad
	(t,x)\in\SS'_{\Pgamma j}.
\end{align}
For $(t,x)\in\SR_{\Pgamma j\pm}$ the geodesic follows $\Pgamma$ until time $s$, for some $s\in(\start_j,\ed_j)$, and then becomes the linear path with velocity $\alpha_{\Pgamma\pm}$, giving 
\begin{align}
\label{e.fPsi.R.formula}
\begin{split}
	\fPsi_{\Pgamma j}(t,x) 
	=&
	-2\alpha_{\Pgamma\pm}\,(x-(t-\start_{j})\dot{\Pgamma}-\Pgamma(\start_{j})) 
	\\
	&+ (t-\start_{j})(\dens_{\Pgamma}-\dot{\Pgamma}^2) 
	+ \fPhi|_{\Pgamma}(\start_{j}),
	\quad
	(t,x)\in\SR_{\Pgamma j\pm}.
\end{split}
\end{align}

\begin{figure}
\hfill
\begin{minipage}[t]{.48\linewidth}
\fbox{\includegraphics[width=\linewidth]{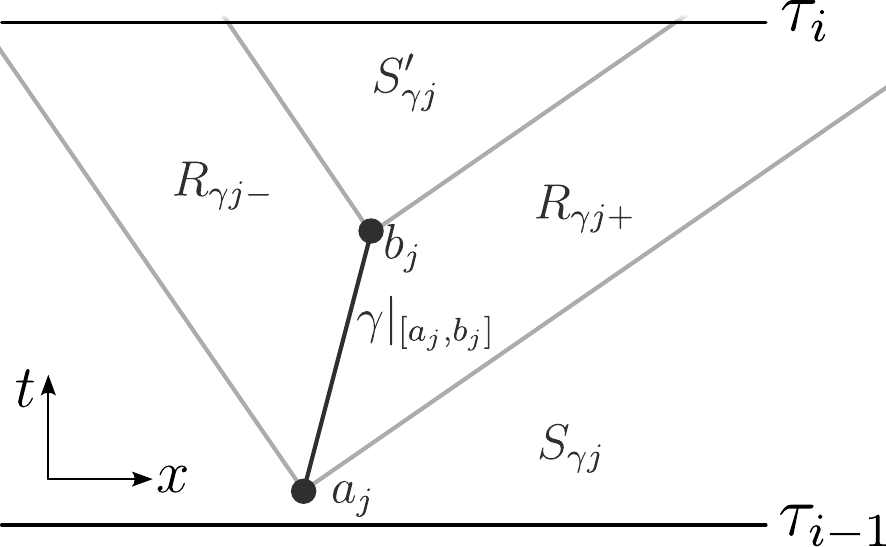}}
\caption{}
\label{f.fPsij}
\end{minipage}
\hfill
\setlength{\fboxsep}{0pt}
\begin{minipage}[t]{.48\linewidth}
\fbox{\includegraphics[width=\linewidth]{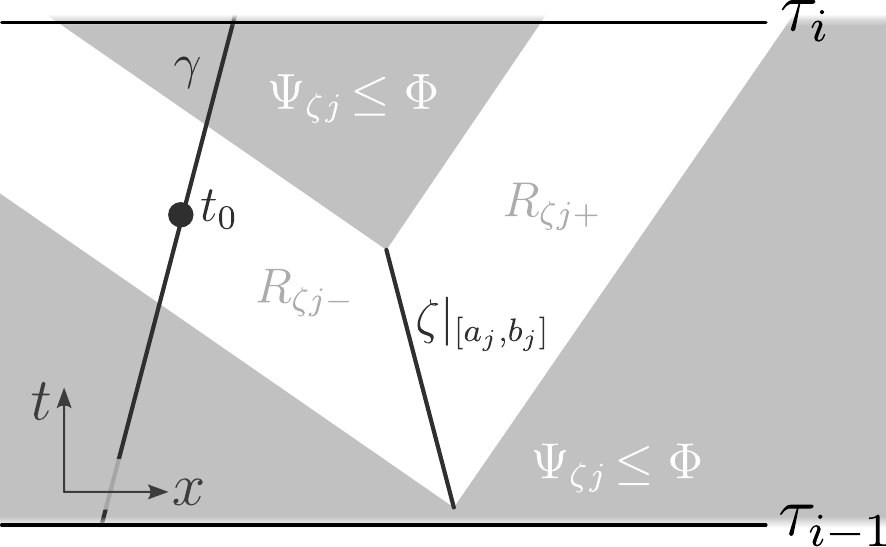}}
\caption{}
\label{f.fPsijj}
\end{minipage}
\end{figure}

We are now ready to verify that $\Fg$ satisfies the desired properties.
Recall that $\fPhi$ is a piecewise linear solution on $[\tau_{i-1},\tau_{i}]\times\R$ without non-entropy shocks.
Using this and the explicit formulas of $\fPsi_{\Pgamma j}$, namely \eqref{e.fPsi.S.formula}, \eqref{e.fPsi.S'.formula}, \eqref{e.fPsi.R.formula}, it is not hard to check that $\Fv=\partial_x\Fg$ (with $\Fg$ defined in \eqref{e.fPsi}) is a piecewise linear solution on $[\tau_{i-1},\tau_{i}]\times\R$.
Further, since $\fPhi$ does not have non-entropy shocks, non-entropy shocks of $\Fv$ can only occur at those $(t,x)$ such that $\Fg=\fPsi_{\Pgamma j}$ in a neighborhood of $(t,x)$, for some $\Pgamma\in\Snet_{i}$ and some $j=1,\ldots,M$.
As is readily checked from the explicit formulas of $\fPsi_{\Pgamma j}$, discontinuity of $\partial_x\fPsi_{\Pgamma j}$ happens only on $\graph(\Pgamma|_{[\start_j,\ed_j]})$, with $(\partial_x\fPsi_{\Pgamma j})(t,\Pgamma(t)^\pm)=-2\alpha_{\Pgamma\pm}=-2(\dot{\Pgamma}\pm\sqrt{\dens_{\Pgamma}})$.
Hence $\Fv$ satisfies Property~\eqref{l.pwlinear.shock} with $\Fu\mapsto\Fv$ on $[\tau_{i-1},\tau_{i}]\times\R$.

\smallskip
\noindent\textbf{Step~2.\ Showing that $\Fg\leq\Hfn[\Mmu]$ on $[\tau_{i-1},\tau_{i}]\times\R$.}
By the definition of $\fPhi$ and $\fPsi_{\Pgamma j}$ (see \eqref{e.fPhi} and \eqref{e.fPsi.j}), one readily checks that $\fPhi\leq\Hfn[\Mmu]$ and $\fPsi_{\Pgamma j}\leq\Hfn[\Mmu]$ on $[\tau_{i-1},\tau_{i}]\times\R$.
Since $\Fg$ is constructed as the maximum of $\fPhi$ and the $\fPsi_{\Pgamma j}$s, the desired result follows.

\smallskip
\noindent\textbf{Step~3.\ Showing that $\Fg\geq\Hfn[\Mmu]$ on $[\tau_{i-1},\tau_{i}]\times\R$.}
The result of Step~1 asserts that $\Fv$ is a piecewise linear solution on $[\tau_{i-1},\tau_{i}]\times\R$.
Let $(\Stime(\Fv),\Sbdy(\Fv),\Sdom(\Fv))$ be the corresponding tractable partition of a light cone $\cone(c)$.
To prove $\Fg\geq\Hfn[\Mmu]$, we apply Lemma~\ref{l.match} with $(\Stime,\Sbdy,\Sdom)=(\Stime(\Fv),\Sbdy(\Fv),\Sdom(\Fv))$ and $(\und{s},\bar{s},\Sbdy',\Sdom')=(\{\tau_{i-1},\tau_{i}\},\Snet\cup\{\Pl_{i\textL},\Pl_{i\textR}\},\Sdom')$, where $\Pl_{i\textL}$ and $\Pl_{i\textR}$ denote the left and right boundaries of $\cone(c)\cap([\tau_{i-1},\tau_{i}]\times\R)$, and $\Sdom'$ consists of the open connected region thus partitioned.
Note that we can make $\graph(\Snet_{i})\subset\graph(\Sbdy(\Fv))$ by adding those $\Pgamma\in\Snet_{i}$ into $\Sbdy(\Fv)$.
Because the paths in $\Sbdy(\Fv)$ takes the form~\eqref{e.pwlinear.Pl} and because the paths in $\Snet_{i}$ are linear, they intersect at most finitely many times. 
Hence, by cutting these paths shorter, we make sure that they form the boundary paths of a tractable partition. 

In Step~3-1 below, we will verify that our $\Fg$ in \eqref{e.l.pwlinear1.Fg} satisfies \eqref{e.l.match.hopflax}.
In Step~3-2 below, we will show that the corresponding $\Mnu$, defined in \eqref{e.l.match.Mnu}, satisfies $\Mnu \geq \Mmu\lfloor_{[\tau_{i-1},\tau_{i}]\times\R}$.
Under the current setup and given that $\Fg(\tau_{i-1},\cdot)=\Hfn[\Mmu](\tau_{i-1},\cdot)$ by the induction hypothesis, it is readily checked that the $\til{\Fh}$ defined in \eqref{e.l.match.Fh} is equal to $\Hfn[\,\Mnu+\Mmu\lfloor_{[0,\tau_{i-1}]\times\R}\,]$.
Hence, once Steps~3-1 and 3-2 are completed, $\Fg=\til{\Fh}=\Hfn[\,\Mnu+\Mmu\lfloor_{[0,\tau_{i-1}]\times\R}\,]\geq\Hfn[\Mmu]$ on $[\tau_{i-1},\tau_{i}]\times\R$.

\smallskip
\noindent\textbf{Step~3-1.\ Verifying that $\Fg$ satisfies \eqref{e.l.match.hopflax}.}
From \eqref{e.fPhi} and \eqref{e.fPsi.j}, it is not hard to checked that $\HL_{\partial\dom}[\fPhi]=\fPhi$ and that $\HL_{\partial\dom}[\fPsi_{\Pgamma j}]=\fPsi_{\Pgamma j}$ on every $\dom\in\Sdom'$.
Since the operator $\HL_{\partial\dom}$ commutes with $\max$, \eqref{e.l.match.hopflax} holds.

\smallskip
\noindent\textbf{Step~3-2.\ Showing that $\Mnu \geq \Mmu\lfloor_{[\tau_{i-1},\tau_{i}]\times\R}$.}
Let us prepare some properties.
First, 
\begin{align}
	\label{e.p.pwlinear.step3-2.}
	\fPsi_{\Pzeta j} \leq \fPhi \qquad \text{on } \bar{\SS_{\Pzeta j}\cup\SS'_{\Pzeta j}},
	\qquad
	\text{for all } \Pzeta\in\Snet_{i} \text{ and } j=1,\ldots,M.
\end{align}
To see why, consider first $(t,x)\in\SS_{\Pzeta j}$.
Use \eqref{e.fPhi} to write
\begin{align}
	\fPhi|_{\Pzeta}(\start_j)
	=
	\sup\{|\Pchi'|_{\diri} + \Fh(\tau_{i-1},\Pchi'(\tau_{i-1})): \{\tau_{i-1}\}\times\R\xrightarrow{\Pchi'} (\start_j,\Pzeta(\start_j)) \}
\end{align}
and insert it into \eqref{e.fPsi.S.sup} for $\Pgamma\mapsto\Pzeta$ to get
\begin{align}
	\fPsi_{\Pzeta j}(t,x)
	=
	\sup\big\{ |\Pchi'\cup\Pchi|_{\diri} + \Fh(\tau_{i-1},\Pchi'(\tau_{i-1}))
	: 
	\{\tau_{i-1}\}\times\R\xrightarrow{\Pchi'} (\start_j,\Pzeta(\start_j)) \xrightarrow{\Pchi} (t,x) \big\}.
\end{align}
We see that this is $\leq\HL_{\tau_{i-1}\to t}[\Fh(\tau_{i-1},\cdot)](x)=\fPhi(t,x)$.
As for $(t,x)\in\SS'_{\Pzeta j}$, by the definition of $\ed_{j}$, we have $\Fpsi_{\Pzeta}(\ed_j)=\Phi_{\Pzeta}|(\ed_j)$ (unless $\ed_j=\tau_{i}$, which implies $\SS'_{\Pzeta j}=\emptyset$).
Hence the same argument applies.
So far we have proven \eqref{e.p.pwlinear.step3-2.} on $\SS_{\Pzeta j}\cup\SS'_{\Pzeta j}$.
Indeed, $\fPhi$ is continuous and $\fPsi_{\Pzeta j}$ is continuous except where it is $-\infty$.
Hence \eqref{e.p.pwlinear.step3-2.} follows.
Next, given \eqref{e.p.pwlinear.step3-2.}, it is not hard check from the definitions of $\Fpsi_{\Pgamma}$ and $\fPsi_{\Pgamma}$ that
\begin{align}
	\label{e.p.pwlinear.step3-2..}
	(\fPsi_{\Pgamma}|_\Pgamma) \vee \fPhi|_{\Pgamma}
	=
	\Fpsi_{\Pgamma},
	\quad	
	\Pgamma\in\Snet_{i}.
\end{align}
Also, from the definition of $\Fpsi_{\Pgamma}$, it follows that the function is continuous, piecewise rational (so is differentiable everywhere except at finitely many points), and satisfies
\begin{align}
	\label{e.p.pwlinear.step3-2..-2}
	\dot{\Fpsi_{\Pgamma}} \geq \dens_{\Pgamma} - \dot{\Pgamma}^2 \text{ wherever } \dot{\Fpsi_{\Pgamma}} \text{ exists,}
	\qquad
	\Pgamma\in\Snet_{i}.
\end{align}

We now prove $\Mnu \geq \Mmu\lfloor_{[\tau_{i-1},\tau_{i}]\times\R}$.
This amounts to proving that $\dot{\Fg|_{\Pgamma}} \geq \dens_{\Pgamma} - \dot{\Pgamma}^2$, for all $\Pgamma\in\Snet_{i}$ and for a.e.\ on $[\tau_{i-1},\tau_{i}]$.
By the construction of $\Fg$ in Step~1, $\Fg|_{\Pgamma}$ is continuous and piecewise rational, so is differentiable everywhere except at finitely many points.
Fix any $\Pgamma\in\Snet_{i}$ and any $t_0\in(\tau_{i-1},\tau_{i})$ such that $\dot{\Fg|_{\Pgamma}}(t_0)$ and $\dot{\Fpsi_{\Pgamma}}(t_0)$ exist.
We show that $\dot{\Fg|_{\Pgamma}}(t_0) \geq \dens_{\Pgamma} - \dot{\Pgamma}^2$ in two cases separately.
\begin{description}
\item[Case~1. If $\Fg|_{\Pgamma}(t_0)=(\fPhi|_{\Pgamma}\vee\fPsi_{\Pgamma}|_{\Pgamma})(t_0)$]
Recall that $\Fg$ is the maximum of $\fPhi$ and $\fPsi_{\Pzeta}$ over $\Pzeta\in\Snet_{i}$.
This property, the assumption of Case~1, and \eqref{e.p.pwlinear.step3-2..} together give
\begin{align}
	\Fg|_{\Pgamma}(t_0)=\Fpsi_{\Pgamma}(t_0),
	\quad
	\text{ and }
	\quad
	\Fg|_{\Pgamma}(t)\geq\Fpsi_{\Pgamma}(t), \text{ for all } t \in (\tau_{i-1},\tau_{i}).
\end{align}
Combining this with the assumption on $t_0$ gives $\dot{\Fg|_{\Pgamma}}(t_0)=\dot{\Fpsi_{\Pgamma}}(t_0)$.
This together with \eqref{e.p.pwlinear.step3-2..-2} gives the desired inequality. 

\item[Case~2. If $\Fg|_{\Pgamma}(t_0)\neq (\fPhi|_{\Pgamma}\vee\fPsi_{\Pgamma}|_{\Pgamma})(t_0)$]
Since $\Fg$ is the maximum of $\fPhi$ and $\fPsi_{\Pzeta}$ over $\Pzeta\in\Snet_{i}$, in this case, there exist $\Pzeta\in\Snet_{i}\setminus\{\Pgamma\}$ and $j\in\{1,\ldots,M\}$, such that
\begin{align}
	\label{e.p.pwlinear.step3-2...}
	\Fg|_{\Pgamma}
	=
	\fPsi_{\Pzeta j}|_{\Pgamma}
	>
	(\fPsi_{\Pgamma}|_\Pgamma \vee \fPhi|_{\Pgamma})
	=
	\Fpsi_{\Pgamma},
	\qquad
	\text{in a neighborhood of } t_0.
\end{align}
Consider $\und{t}_0:=\sup\{ t\in[\tau_{i-1},t_0) : \fPsi_{\Pzeta j}|_{\Pgamma}(t)\leq\Fpsi_{\Pgamma}(t) \}$.
By \eqref{e.p.pwlinear.step3-2.} and  \eqref{e.p.pwlinear.step3-2..-2}, we have $(\und{t}_0,\Pgamma(\und{t}_0)),(t_0,\Pgamma(t_0))\in\bar{\SR_{\Pzeta j+}}$ or $(\und{t}_0,\Pgamma(\und{t}_0)),(t_0,\Pgamma(t_0))\in\bar{\SR_{\Pzeta j-}}$; see Figure~\ref{f.fPsijj} for an illustration.
Given this property, using \eqref{e.fPsi.R.formula} for $\Pgamma\mapsto\Pzeta$ and the linearity of $\Pgamma$ shows that $\fPsi_{\Pzeta j}|_{\Pgamma}(t)$ is \emph{linear} on $[\und{t}_0,t_0]$.
Also, by the definition of $\und{t}_0$, we have $\fPsi_{\Pzeta j}|_{\Pgamma}>\Fpsi_{\Pgamma} $ on $(\und{t}_0,t_0]$ and $\fPsi_{\Pzeta j}|_{\Pgamma}=\Fpsi_{\Pgamma}$ at $\und{t}_0$, so
\begin{align}
	\label{e.p.pwlinear.step3-2....}
	\fPsi_{\Pzeta j}|_{\Pgamma}(t_0) - \fPsi_{\Pzeta j}|_{\Pgamma}(\und{t}_0) 
	>
	\Fpsi_{\Pgamma}(t_0) - \Fpsi_{\Pgamma}(\und{t}_0) 
	= 
	\int_{\und{t}_0}^{t_0} \d t \, \dot{\Fpsi_{\Pgamma}}.
\end{align}
Divide \eqref{e.p.pwlinear.step3-2....} by $t_0-\und{t}_0$; in the leftmost expression, use the property that $\fPsi_{\Pzeta j}|_{\Pgamma}(t)$ is linear on $[\und{t}_0,t_0]$; in the rightmost expression, use \eqref{e.p.pwlinear.step3-2..}.
Doing so gives $\frac{\d~}{\d t}({\fPsi}_{\Pzeta j}|_{\Pgamma})(t_0)>\dens_{\Pgamma} - \dot{\Pgamma}^2$.
The left-hand side is equal to $\dot{\Fg|_{\Pgamma}}(t_0)$ by \eqref{e.p.pwlinear.step3-2...}, so the desired inequality follows.
\end{description}
We have shown $\dot{\Fg|_{\Pgamma}}(t_0) \geq \dens_{\Pgamma} - \dot{\Pgamma}^2$ in both cases and hence have completed the proof.
\end{proof}

We extract what we need from Lemma~\ref{l.pwlinear} for later and put it in Proposition~\ref{p.pwlinear}.

\begin{customprop}{\ref*{p.pwlinear.}'}\label{p.pwlinear}
Take any piecewise linear and cone-supported $\Mmu$.
\begin{enumerate}%[leftmargin=20pt,label=(\alph*)]
	\item \label{p.pwlinear.rate}
	The function $\Fu:=\partial_x\Hfn[\Mmu]$ is a tractable solution, with $\Mmu\geq\Mnon[\Fh]$ and $\rate(\Metric_{\Mmu})\geq\rate(\Mnon[\Fh])$.
	\item \label{p.pwlinear.path}
	For any $(t,x)\in(0,1]\times\R$, there exists a path $(t,x)\xrightarrow{\Pgeo}\{1\}\times\R$ such that $|\Pgeo|_{\Metric_{\Ment[\Fh]}}+\Fh(t,x)=\Fh(1,\Pgeo(1))$.
\end{enumerate}
\end{customprop}
\begin{proof}
\eqref{p.pwlinear.rate}\
The first statement follows from Lemma~\ref{l.pwlinear} because a piecewise linear solution is a tractable solution.
The second statement follow from Lemma~\ref{l.pwlinear}\eqref{l.pwlinear.shock} and \eqref{e.Mnon}.
The last statement follows from the second statement and \eqref{e.monotone.weak}.

\eqref{p.pwlinear.path}\
Let $(\Stime(\Fu),\Sbdy(\Fu),\Sdom(\Fu))$ be the tractable partition associated with $\Fu$.
To construct the path $\Pgeo$, start at $(t,x)$; follow a characteristic when being in $\SD\in\Sdom(\Fu)$ and follow $\Pl\in\Sbdy(\Fu)$ when hitting such an $\Pl$.
This procedure terminates in finite steps.
We need to ensure that $\Pgeo$ does not take any of $\Pl\in\Sbdy(\Fu)$ with $\Fu_{\Pl-}>\Fu_{\Pl+}$.
By Lemma~\ref{l.pwlinear}\eqref{l.pwlinear.shock}, any $\Fu_{\Pl-}>\Fu_{\Pl+}$ happens only when $\Fu_{\Pl-},\Fu_{\Pl+},\dot{\Pl}$ are all constant.
In this case, every $(t,\Pl(t))$ with $t\in(\start(\Pl),\ed(\Pl))$ is connected to a characteristic that is emanating from $\graph(\Pl)$; see Figure~\ref{f.antishock} for an illustration.
By taking a sequence $t_k\downarrow \start(\Pl)$, we see that the point $(\start(\Pl),\Pl(\start(\Pl)))$ is connected to either a characteristic or an $\Pl'\in\Sbdy(\Fu)$ with $\Fu_{\Pl'-}\leq\Fu_{\Pl'+}$; see Figure~\ref{f.antishock} for an illustration.
This being the case, we can construct $\Pgeo$ such that $\Pgeo=\Pgeo_1\cup\ldots\cup\Pgeo_{M}$, where each $\Pgeo_j$ is a segment of
\begin{enumerate}%[leftmargin=20pt,label=(\roman*)]
\item a characteristic within a $\SD\in\Sdom(\Fu)$, or
\item a path $\Pl'\in\Sbdy(\Fu)$ with $\Fu_{\Pl'-}\leq\Fu_{\Pl'+}$.
\end{enumerate}
Let $t=s_0<s_1<\ldots<s_M=1$ denote the starting and ending time of these $\Pgeo_j$s.
\begin{enumerate}%[leftmargin=20pt,label=(\roman*)]
\item Using $\partial_t\Fh=\frac{1}{4}\Fu^2$ and \eqref{e.characteristics} gives $\Fh(s_j,\Pgeo(s_j))-\Fh(s_{j-1},\Pgeo(s_{j-1}))=|\Pgeo_j|_{\diri}$, which is equal to $|\Pgeo_j|_{\Metric_{\Ment[\Fh]}}$ because $\graph(\Pgeo_j|_{(s_{j-1},s_j)})\subset\SD$ for some $\SD\in\Sdom(\Fu)$.
\item Using \eqref{e.Fg.dot..} and \eqref{e.Ment} gives $\Fh(s_j,\Pgeo(s_j))-\Fh(s_{j-1},\Pgeo(s_{j-1}))=|\Pgeo_j|_{\Metric_{\Ment[\Fh]}}$.
\end{enumerate}
Adding these equalities over $j=1,\ldots,M$ gives the desired result.
\end{proof}

\begin{figure}
\begin{minipage}[t]{.335\linewidth}
\fbox{\includegraphics[width=\linewidth]{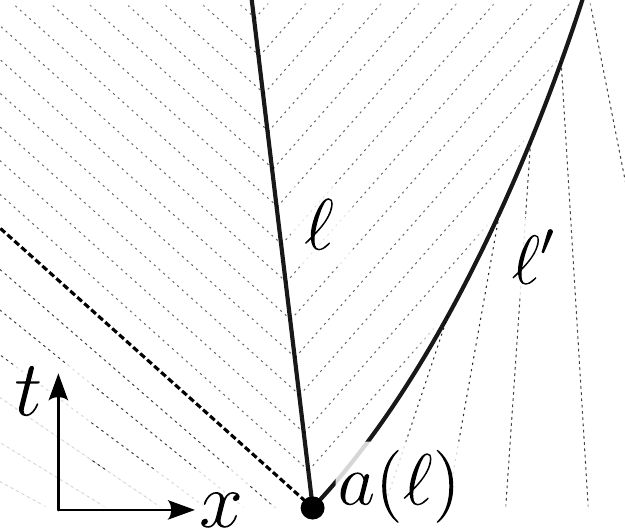}}
\caption{%
	The dashed paths are characteristics.
}
\label{f.antishock}
\end{minipage}
\hfill
\begin{minipage}[t]{.315\linewidth}
\setlength{\fboxsep}{0pt}
\fbox{\includegraphics[width=\linewidth]{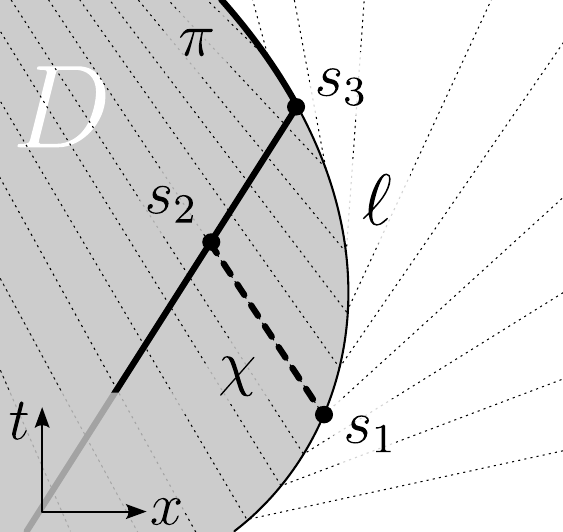}}
\caption{%
	The thin solid path is $\Pl$.
	The thick solid path is $\Pgeo$.
	The dashed paths are characteristics, with the thick one being $\Pchi$.
}
\label{f.contradiction1}
\end{minipage}
\hfill
\begin{minipage}[t]{.315\linewidth}
\setlength{\fboxsep}{0pt}
\fbox{\includegraphics[width=\linewidth]{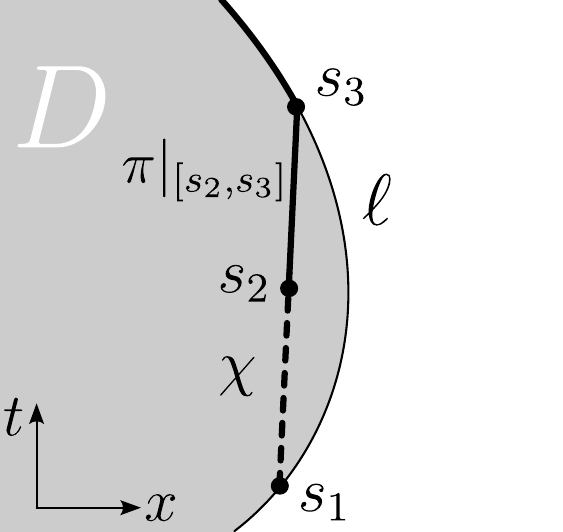}}
\caption{
	The case where $\Peta:=\Pchi\cup\Pgeo|_{[s_2,s_3]}$ is linear.
}
\label{f.contradiction2}
\end{minipage}
\end{figure}

\subsection{Approximation}
\label{s.tools.approx}
Here we prepare the tools for approximating paths and measures.

\begin{lem}\label{l.approx.path}
Take any sequence of measures $\{\Mmu_{n}\}_{n}\subset\Msp_+$ and any sequence of paths $\{\Peta_{n}\}_{n}\subset\Hsp^1[\start,\ed]$ such that $\inf_{n} |\Peta_{n}|_{\Metric_{\Mmu_{n}}}>-\infty$ and that $\Peta_n(\start)=x$ is independent of $n$.
There exist a subsequence $\{\Peta_{n_j}\}_j$ and a path $\Peta\in\Hsp^1[\start,\ed]$ such that 
\begin{align}
	\label{e.l.approx.path}
	\limsup_{j\to\infty} \norm{\Peta_{n_j}-\Peta}_{\Lsp^\infty[\start,\ed]} =0,
	\qquad
	\limsup_{j\to\infty} |\Peta_{n_j}|_{\Metric_{\Mmu_{n_j}}} \leq  |\Peta|_{\diri} + \limsup_{n\to\infty}\rate(\Metric_{\Mmu_{n}}).
\end{align}
\end{lem}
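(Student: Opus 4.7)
The plan is to use the standard compactness-plus-lower-semicontinuity argument driven by a Young-type estimate. First I will derive a uniform $\Hsp^1$ bound on $\{\Peta_n\}$: applying $ab \le \tfrac{2}{3}a^{3/2} + \tfrac{1}{3}b^3$ to $a=\dens_{\Pgamma}(t)$, $b=\ind_{\Pgamma=\Peta_n}(t)$, integrating, summing over $\Pgamma\in\Snet_n$, and using the internal disjointness of the paths in $\Snet_n$ (which forces $\sum_\Pgamma\ind_{\Pgamma=\Peta_n}(t)\le 1$ for a.e.\ $t$), I obtain
\begin{align*}
\Mmu_n(\graph(\Peta_n)) \le \tfrac{1}{2}\,\rate(\Metric_{\Mmu_n}) + \tfrac{1}{3}(\ed-\start).
\end{align*}
Combined with the hypothesis $|\Peta_n|_{\Metric_{\Mmu_n}} = \Mmu_n(\graph(\Peta_n)) + |\Peta_n|_\diri \ge -K$, this bounds $\int_{\start}^{\ed}\dot\Peta_n^2\,\d t$ by a quantity depending only on $K$, $\ed-\start$, and $\rate(\Metric_{\Mmu_n})$.

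Next I will pass to a subsequence along which $\rate(\Metric_{\Mmu_n})$ converges in $[0,\infty]$. If the limit is $+\infty$, then the second conclusion is trivial as soon as a uniform limit $\Peta$ has been produced; otherwise the previous step provides a uniform $\Lsp^2$ bound on $\dot\Peta_n$. Together with the fixed initial value $\Peta_n(\start)=x$ this yields uniform Hölder-$1/2$ equicontinuity and pointwise boundedness, so Arzelà--Ascoli produces a subsequence $\Peta_{n_j}\to\Peta$ uniformly on $[\start,\ed]$. A further extraction gives $\dot\Peta_{n_j}\rightharpoonup\dot\Peta$ weakly in $\Lsp^2$, so $\Peta\in\Hsp^1[\start,\ed]$, and weak lower semicontinuity of $\int(\cdot)^2$ delivers the slack $|\Peta|_\diri-\limsup_j|\Peta_{n_j}|_\diri \ge 0$, i.e.\ $|\Peta|_\diri \ge \limsup_j |\Peta_{n_j}|_\diri$. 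This establishes the first conclusion of the lemma.

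To finish I will combine these ingredients to bound $\limsup_j(\Mmu_{n_j}(\graph(\Peta_{n_j}))+|\Peta_{n_j}|_\diri)$ from above by $|\Peta|_\diri+\limsup_n\rate(\Metric_{\Mmu_n})$. The main obstacle is that the elementary Young estimate in Step~1 gives only $\tfrac{1}{2}\rate+\tfrac{1}{3}(\ed-\start)$ for the mass term, which is strictly weaker than the $\rate$ that appears in the statement; closing this gap is where the work lies. My plan is to extract a further subsequence along which $\Mmu_{n_j}$ converges vaguely to some $\Mmu\in\Msp_+$ with $\rate(\Metric_\Mmu)\le\liminf_j\rate(\Metric_{\Mmu_{n_j}})$ (tightness follows from the uniform $\Lsp^{3/2}$ bound on densities built into $\sup_n\rate(\Mmu_n)<\infty$), and then to identify $\limsup_j\Mmu_{n_j}(\graph(\Peta_{n_j}))$ with $\Mmu(\graph(\Peta))$ via a tube/outer-regularity argument that exploits the uniform convergence $\Peta_{n_j}\to\Peta$ and absorbs the $\tfrac{1}{3}(\ed-\start)$ error against the Dirichlet slack $|\Peta|_\diri-\limsup_j|\Peta_{n_j}|_\diri$. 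The delicate point is that $\Mmu_n$ is singular (supported on countably many curves), so standard Prokhorov-type arguments must be adapted to this geometric setting; this is what I expect to be the technical heart of the proof.
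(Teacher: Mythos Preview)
Your Steps 1--2 are correct and are exactly the paper's argument, just fleshed out: the paper tersely invokes the compact embedding $\Hsp^1\hookrightarrow\Csp$ for the uniform limit, bounds the mass term $\Mmu_{n_j}(\graph(\Peta_{n_j}))$ by ``H\"older's inequality'', and bounds the Dirichlet term by lower semicontinuity of $\int\dot\Peta^2$ under uniform convergence.

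Your Step 3, however, is chasing a phantom. The gap you spotted is real and cannot be closed, because the second inequality in the lemma is \emph{false as stated}. Take $\Peta_n\equiv\Peta$ fixed and $\Mmu_n=c\,\delta_{\Peta}$ with a constant $0<c<9/16$: then $\Mmu_n(\graph(\Peta_n))=c(\ed-\start)$ while $\rate(\Metric_{\Mmu_n})=\tfrac{4}{3}c^{3/2}(\ed-\start)<c(\ed-\start)$, so $|\Peta_n|_{\Metric_{\Mmu_n}}=c(\ed-\start)+|\Peta|_\diri>|\Peta|_\diri+\rate(\Metric_{\Mmu_n})$. Your vague-convergence plan therefore cannot succeed, and the specific mechanism you propose would fail anyway: identifying $\limsup_j\Mmu_{n_j}(\graph(\Peta_{n_j}))$ with $\Mmu(\graph(\Peta))$ is hopeless for singular measures evaluated on measure-zero curves, and the ``Dirichlet slack'' you want to absorb errors into can be identically zero (as in the counterexample).

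The paper's one-line proof has the same defect: H\"older actually yields only $\Mmu_n(\graph(\Peta_n))\le(\tfrac{3}{4}\rate(\Metric_{\Mmu_n}))^{2/3}(\ed-\start)^{1/3}$, not $\rate(\Metric_{\Mmu_n})$. This is harmless for the paper, because the lemma is used only once, in Section~\ref{s.pfmain.=h*}, with $\limsup_n\rate(\Metric_{\Mmu_n})\le\eps$ and $\eps\to 0$ taken afterward; any bound of the form $|\Peta|_\diri+o_\eps(1)$ suffices there. So stop at the end of your Step~2, record the inequality with the honest right-hand side (either your Young bound $\tfrac12\rate+\tfrac13(\ed-\start)$ or the H\"older bound above), and note that this weaker form is what the application actually needs.
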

\begin{proof}
Since $\Hsp^1[\start,\ed]$ is compactly embedded into $\Csp[\start,\ed]$, there exist a subsequence $\{\Peta_{n_j}\}_j$ and an $\Peta$ such that the first limit in \eqref{e.l.approx.path} holds.
Next, use \eqref{e.length} for $\Peta\mapsto\Peta_{n_j}$ and consider the $j\to\infty$ limsups of the first and second integrals.
The first limsup is at most $\limsup_{n}\rate(\Metric_{\Mmu_{n}})$ by H\"{o}lder's inequality; the second limsup is at most $|\Peta|_{\diri}$ because $\Pgamma\mapsto\int_{[\start,\ed]}\d t\,\dot{\Pgamma}^2$ is lower semicontinuous under the uniform topology.
\end{proof}

\begin{defn}\label{d.approx}
Take any $\Mmu=\sum_{\Pgamma\in\Snet}\dens_\Pgamma\,\delta_{\Pgamma}\in\Msp_+$ such that $\rate(\Metric_{\Mmu})<\infty$.
Let us construct a piecewise linear approximation of $(\Mmu,\Snet)$ as follows.
First we need to perform a surgery on $\Snet$.
Examine any $\Pgamma\in\Snet$, if $\graph(\Pgamma)$ intersects with $\{0\}\times\R$ at any point other than $(0,0)$, divide the path $\Pgamma$ into $\Pgamma|_{[1/2,\ed(\Pgamma)]}$, $\Pgamma|_{[1/3,1/2]}$, $\Pgamma|_{[1/4,1/3]}$, \ldots.
Denote the post-surgery set still by $\Snet$.
This surgery does not change $\Mmu$ and ensures that for every $\Pgamma\in\Snet$, there exists a $c_\Pgamma<\infty$ such that $\graph(\Pgamma)\subset\cone(c_\Pgamma)$.
Next, index the paths in $\Snet$ as $\Pgamma_1,\Pgamma_2,\ldots$ and put $[\start(\Pgamma_i),\ed(\Pgamma_i)]=[\start_i,\ed_i]$.
For each $\Pgamma_{i}$, construct a sequence of piecewise linear paths $\Pgamma_{i1},\Pgamma_{i2},\ldots\in\Hsp^1[\start_{i},\ed_{i}]$ with  $\Pgamma_{in}(\start_{i})=\Pgamma_{i}(\start_{i})$ and $\Pgamma_{in}(\ed_{i})=\Pgamma_{i}(\ed_{i})$ for all $n$, and a sequence of piecewise constant $\dens_{i1},\dens_{i2},\ldots\in\Lsp^1[\start_{j},\ed_{j}]$, such that
\begin{align}
	\label{e.approx}
	\sum_{i=1}^\infty \int_{\start_i}^{\ed_i} \d t \, | \dot{\Pgamma_{in}} - \dot{\Pgamma_{i}} |^{2} \xrightarrow{n\to\infty} 0,
	\qquad
	\sum_{i=1}^\infty \int_{\start_i}^{\ed_i} \d t \, 
	\big| \dens_{in} - \dens_{\Pgamma_i} \big|^{3/2}
	\xrightarrow{n\to\infty} 0.	 
\end{align}
Let $\Mmu_{n}:=\sum_{i=1}^n \dens_{in}\delta_{\Pgamma_{in}}$.
This way, each $\Mmu_{n}$ is piecewise linear and cone-supported (Definition~\ref{d.pwlinear.measure}), and $\rate(\Metric_{\Mmu_{n}})\to \rate(\Metric_{\Mmu})$ as $n\to\infty$.
\end{defn}

\begin{lem}\label{l.approx.metric}
For the $\Mmu$ and $\Mmu_n$ as in Definition~\ref{d.approx} and for all $(s,y,s',y')\in\R^4_{\uparrow}$,
\begin{align}
	\limsup_{n\to\infty}\Metric_{\Mmu_{n}}(s,y;s',y')\leq \Metric_{\Mmu}(s,y;s',y').
\end{align}
\end{lem}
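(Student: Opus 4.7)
The plan is to take near-optimal paths for $\Metric_{\Mmu_n}$, extract a uniformly convergent subsequence via Lemma~\ref{l.approx.path}, and use the limiting path as a test path for $\Metric_{\Mmu}$. Fix $\eps>0$ and choose $\Peta_n\in\Hsp^1[s,s']$ with $(s,y)\xrightarrow{\Peta_n}(s',y')$ and $|\Peta_n|_{\Metric_{\Mmu_n}}\geq\Metric_{\Mmu_n}(s,y;s',y')-\eps$. The straight-line competitor gives $\Metric_{\Mmu_n}(s,y;s',y')\geq-(y'-y)^2/(s'-s)$ uniformly in $n$, so $\inf_n|\Peta_n|_{\Metric_{\Mmu_n}}>-\infty$; and \eqref{e.approx} gives $\rate(\Metric_{\Mmu_n})\to\rate(\Metric_{\Mmu})<\infty$. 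Lemma~\ref{l.approx.path} then furnishes a subsequence $n_j$ along which $\Peta_{n_j}\to\Peta$ uniformly with $\Peta\in\Hsp^1[s,s']$, $\Peta(s)=y$, $\Peta(s')=y'$.

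The reduction is to prove
\[
\limsup_{j\to\infty}|\Peta_{n_j}|_{\Metric_{\Mmu_{n_j}}}\leq|\Peta|_{\Metric_{\Mmu}},
\]
since the right side is at most $\Metric_{\Mmu}(s,y;s',y')$ and the left side exceeds $\limsup_j\Metric_{\Mmu_{n_j}}(s,y;s',y')-\eps$; sending $\eps\to 0$ and passing to any subsequence realizing $\limsup_n\Metric_{\Mmu_n}(s,y;s',y')$ then concludes. The Dirichlet part is immediate from the lower semicontinuity of $\eta\mapsto\int\dot\eta^2\,dt$ under uniform convergence, giving $\limsup_j|\Peta_{n_j}|_{\diri}\leq|\Peta|_{\diri}$. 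The substantive step, which I expect to be the main obstacle, is
\[
\limsup_{j\to\infty}\Mmu_{n_j}(\graph(\Peta_{n_j}))\leq\Mmu(\graph(\Peta)).
\]

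To establish this, write $\Mmu_n(\graph(\Peta_n))=\sum_{i=1}^n\int_{\start_i}^{\ed_i}\dens_{in}(t)\,\ind_{A_{in}}(t)\,dt$ with $A_{in}:=\{t:\Pgamma_{in}(t)=\Peta_n(t)\}$, and split the sum at a cutoff $I$. For each fixed $i\leq I$, the uniform convergences $\Pgamma_{in}\to\Pgamma_i$ (from \eqref{e.approx} via a Sobolev embedding and common endpoints) and $\Peta_n\to\Peta$ force $\limsup_n\ind_{A_{in}}(t)\leq\ind_{\{\Pgamma_i=\Peta\}}(t)$ pointwise, because any $t\in A_{in}$ for infinitely many $n$ yields $\Pgamma_i(t)=\Peta(t)$ upon extracting. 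Combined with $\dens_{in}\to\dens_{\Pgamma_i}$ in $L^{3/2}$ (hence in $L^1$ on the bounded interval) and reverse Fatou applied with the $L^1$ dominating function $\dens_{\Pgamma_i}$,
\[
\limsup_n\int\dens_{in}\,\ind_{A_{in}}\,dt\ \leq\ \int\dens_{\Pgamma_i}\,\ind_{\{\Pgamma_i=\Peta\}}\,dt,
\]
and summing over $i\leq I$, using internal disjointness of the $\Pgamma_i$s, bounds the head by $\Mmu(\graph(\Peta))$. For the tail, H\"older gives $\int\dens_{in}\,\ind_{A_{in}}\,dt\leq(\ed_i-\start_i)^{1/3}\|\dens_{in}\|_{L^{3/2}}$; the delicate point is the uniform-in-$n$ smallness of $\sum_{i>I}\|\dens_{in}\|_{L^{3/2}}$, which is achievable by choosing the approximation in Definition~\ref{d.approx} to preserve $L^{3/2}$-norm control (for instance, via a conditional-expectation piecewise-constant approximation, which never increases $L^{3/2}$ norms), so that $\sum_{i>I}\|\dens_{in}\|_{L^{3/2}}\leq\sum_{i>I}\|\dens_{\Pgamma_i}\|_{L^{3/2}}$, a tail of a convergent series.

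Combining the head bound, the tail bound (letting $I\to\infty$), and the Dirichlet inequality yields $\limsup_j|\Peta_{n_j}|_{\Metric_{\Mmu_{n_j}}}\leq\Mmu(\graph(\Peta))+|\Peta|_{\diri}=|\Peta|_{\Metric_{\Mmu}}\leq\Metric_{\Mmu}(s,y;s',y')$, completing the reduction and hence the proof. The main technical hurdle is the tail estimate for $\Mmu_n$-mass: although rate convergence is automatic from \eqref{e.approx}, controlling the total variation $\sum_i\|\dens_{in}\|_{L^{3/2}}$ uniformly in $n$ requires care in how the piecewise-constant approximation of the densities is constructed.
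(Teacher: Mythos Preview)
Your argument is correct and follows essentially the same route as the paper: pick (near-)optimal paths for $\Metric_{\Mmu_n}$, pass to a uniformly convergent subsequence via the compactness behind Lemma~\ref{l.approx.path}, handle the Dirichlet part by lower semicontinuity, and show $\limsup_n \Mmu_n(\graph(\Peta_n))\le \Mmu(\graph(\Peta))$ by comparing indicators $\ind_{\Pgamma_{in}=\Peta_n}$ to $\ind_{\Pgamma_i=\Peta}$ and using~\eqref{e.approx}. The one place where you are more explicit than the paper is the tail $\sum_{i>I}\int\dens_{in}\ind_{A_{in}}$: the paper asserts the bound by ``combining'' the pointwise indicator inequality with~\eqref{e.approx}, whereas you note correctly that uniform-in-$n$ smallness of $\sum_{i>I}\|\dens_{in}\|_{L^{3/2}}$ is what is needed and that it is secured by choosing the piecewise-constant $\dens_{in}$ as conditional expectations (so $\|\dens_{in}\|_{L^{3/2}}\le\|\dens_{\Pgamma_i}\|_{L^{3/2}}$), which is fully compatible with the freedom Definition~\ref{d.approx} allows.
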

\begin{proof}
Take any $(s,y,s',y')\in\R^4_{\uparrow}$ and any $\Metric_{\Mmu_{n}}$ geodesic $(s,y)\xrightarrow{\Pgeo_{n}}(s',y')$.
Assume $\limsup_{n}|\Pgeo_{n}|_{\Mmu_{n}}>-\infty$, otherwise the desired result follows immediately. 
Given the assumption, the proof of Lemma~\ref{l.approx.path} shows that, after being passing to a subsequence, $\Pgeo_{n}$ converges pointwisely (uniformly in fact) to a path $\pi$ and $\limsup_{n}\int_{[s,s']} \d t (-\dot{\Pgeo}^2_{n})\leq\int_{[s,s']} \d t (-\dot{\Pgeo}^2)$.
Using this and \eqref{e.length} for $\Peta\mapsto\Pgeo_{n}$ give
\begin{align}
	\label{e.l.approx.metric.}
	\limsup_{n\to\infty}\Metric_{\Mmu_{n}}(s,y;s',y')
	\leq
	\limsup_{n\to\infty}
	\int_{s}^{s'} \d t\, \sum_{i=1}^{n} \dens_{\Pgeo_{n}} \ind_{\Pgamma_{in}=\Pgeo_{n}}
	+
	\int_{s}^{s'} \d t\,\big( -\dot{\Pgeo}^2 \big).
\end{align}
Next, by construction, for every fixed $i$ and as $n\to\infty$, the path $\Pgamma_{in}$ converges to $\Pgamma_{i}$ pointwisely (uniformly in fact). 
This and the pointwise convergence of $\Pgeo_{n}$ to $\Pgeo$ together give
%\begin{align}
$
\limsup_{n} \sum_{i=1}^\infty \ind_{\Pgamma_{in}=\Pgeo_{n}}
\leq
\sum_{i=1}^\infty \ind_{\Pgamma_{i}=\Pgeo}.
$
%\end{align}
Combining this with \eqref{e.approx} shows that the second limsup in \eqref{e.l.approx.metric.} is at most $\int_{[s,s']}\d t\, \sum_{i=1}^{\infty} \dens_{i} \ind_{\Pgamma_{i}=\Pgeo}$.
Hence $\limsup_{n} \Metric_{\Mmu_{n}}(s,y;s',y') \leq |\Pgeo|_{\Metric_{\Mmu}}$.
Being the pointwise limit of $\Pgeo_{n}$, the path $\Pgeo$ connects $(s,y)$ to $(s',y')$, so $|\Pgeo|_{\Metric_{\Mmu}}\leq\Metric_{\Mmu}(s,y;s',y')$ and the desired result follows.
\end{proof}

\section{Proof of Theorem~\ref{t.main}, Corollary~\ref{c.main}, and Proposition~\ref{p.multi}}
\label{s.pfmain}

\subsection{Basic properties related to $\Engbm$}
\label{s.pfmain.Engbm}
To streamline the proof, let us prepare some notation and properties related to $\Engbm$.
Recall that $\intvl\subset\R$ is a finite union of closed intervals, $\Ff:\intvl\to\R$ is Borel with $\Ff\geq\Fhh(1,\cdot)$,
\begin{align}
	\label{e.Entspp}
	\Entspp{c}
	:=
	\big\{ \Fphi\in\Csp(\R) 
		: 
		\Fphi|_{[-c,c]}\in\Hsp^1[-c,c], \
		\Fphi\geq\Fhh(1,\cdot),\
		\Fphi|_{\R\setminus[-c,c]} = \Fhh(1,\cdot)|_{\R\setminus[-c,c]}
	\big\},
\end{align}
and $\Entsp := \cup_{c\in(0,\infty)} \Entspp{c}$.
For $\Fphi\in\Entsp$, set
\begin{align}
	\label{e.Entbm}
	\Entbm(\Fphi)
	:=
	\frac{1}{4} \int_{\R} \d x\, \big( (\partial_x\Fphi)^2 - 4x^2 \big).
\end{align}
This way, $\Engbm(\intvl,\Ff) = \inf\{ \Entbm(\Fphi) : \Fphi\in\Entsp, \ \Fphi|_{\intvl} = \Ff \}$.
When $\Engbm(\intvl,\Ff)<\infty$, the infimum has a unique minimizer $\Ff_*$, which can be obtained geometrically, as illustrated in Figure~\ref{f.Ff*}.
Roughly speaking, $\Ff_*$ is obtained by interpolating $\Ff$ outside of $\intvl$ constrained to stay above the parabola $\Fhh(1,x)=-x^2$.
We call this interpolation \textbf{parabola-constrained interpolation} and describe it next.
Let $\hypo(\Fphi):=\{(x,\alpha):\alpha\leq\Fphi(x)\}$ denotes the hypograph of $\Fphi$; decompose $\R\setminus\intvl$ into disjoint open intervals;
on any one of those intervals that is bounded, denoted $(\alpha',\beta')$, consider the convex hull of $\hypo(\Fhh(1,\cdot)|_{[\alpha',\beta']})\cup\{(\alpha',\Ff(\alpha')),(\beta',\Ff(\beta'))\}$ and make $\Ff_*|_{(\alpha',\beta')}$ the function whose hypograph is this convex hull; on the leftmost interval, denoted $(-\infty,\alpha')$, let $\Ff_*|_{(-\infty,\alpha')}$ be the function whose hypograph is the convex hull of $\hypo(\Fhh(1,\cdot)|_{(-\infty,\alpha']})\cup\{(\alpha',\Ff(\alpha'))\}$; do similarly on the rightmost interval.

Next we turn to the finite-dimensional analog of $\Engbm$.
Take any $\vec{y}=(y_1<\ldots<y_k)\in\R^{k}$ and $\vec{\alpha}\in\R^{k}$ with $\alpha_i\geq -y_i^2$ for all $i$, and consider
\begin{align}
	\label{e.Engbmn}
	\Engbmn{k}(\vec{y},\vec{\alpha})
	&:=
	\inf\{\Entbm(\Fphi) : \Fphi\in\Entsp, \Fphi(y_i)=\alpha_i,i=1,\ldots,k\}.
\end{align}
Just like before, the infimum has a unique minimizer, obtained by parabola-constrained interpolating $(y_1,\alpha_1)$, \ldots, $(y_k,\alpha_k)$.
By analyzing how the minimizer varies in $(\vec{y},\vec{\alpha})$, it is not hard to check that
\begin{align}
	\label{e.Engbmn.conti}
	\Engbmn{k}	(\vec{y},\vec{\alpha})
	\text{ is continuous on }
	\big\{ (y_1<\ldots<y_k,\alpha_1,\ldots,\alpha_k)\in\R^{2k} : \alpha_i\geq -y_i^2 \ \forall i \big\}.
\end{align}
To streamline the notation, for any finite $\SY=\{y_1<\ldots<y_{|\SY|}\}\subset\R$ and $\Fphi\in\Entsp$, write $\vec{\SY}:=(y_1,\ldots,y_{|\SY|})$ and $\Fphi(\vec{\SY}):=(\Fphi(y_1),\ldots,\Fphi(y_{|\SY|}))$, and let $\Ff_{*\SY}$ be the unique minimizer of \eqref{e.Engbmn} for $(\vec{y},\vec{\alpha})=(\vec{\SY},\Ff_*(\vec{\SY}))$.
More explicitly,
\begin{align}
	\label{e.Ff*finite}
	\Engbmn{|\SY|}\big(\vec{\SY},\Ff_*(\vec{\SY})\big)
	&:=
	\inf\{\Entbm(\Fphi) : \Fphi\in\Entsp, \Fphi=\Ff_* \text{ on } \SY\}
	=
	\Entbm(\Ff_{*\SY}).
\end{align}
Now take $\SY$ to be $\intvl_{m}:=\intvl\cap\{i/m\}_{i\in\Z}$ and consider 
\begin{align}
	\label{e.Ff*m}
	\Engbmn{|\intvl_{m}|}\big(\vec{\intvl}_{m},\Ff(\vec{\intvl}_{m})\big)
	&:=
	\inf\{\Entbm(\Fphi) : \Fphi\in\Entsp, \Fphi(y)=\Ff(y), \ \forall y\in\intvl_{m}\}
	=
	\Entbm(\Ff_{*\intvl_{m}}).
\end{align}
The minimizer $\Ff_{*\intvl_{m}}$ is obtained by parabola-constrained interpolating $(y,\Ff(y))_{y\in\intvl_{m}}$.
From this, it is not hard to check that, there exists a $c<\infty$ such that
\begin{align}
	\label{e.Ff*m.infty}
	&\text{ if } \Ff\notin\Csp(\intvl), \Entbm(\Ff_{*\intvl_{m}})\to\infty \text{ as } m\to\infty,
	\\
	\begin{split}
		\label{e.Ff*m.convg}
		&\text{ if } \Ff\in\Csp(\intvl), \Ff_{*\intvl_{m}}\to\Ff_*\text{ uniformly on } [-c,c] \text{ as } m\to\infty,
		\\
		%	\label{e.Ff*m.convg.}
		&\hphantom{\text{ if } \Ff\in\Csp(\intvl),}
		\text{ and } \Ff_{*\intvl_{m}}=\Ff_*=\Fhh \text{ outside of } [-c,c] \text{ for all } m.
	\end{split}
\end{align}
Also, consider
\begin{align}
	\label{e.Engbm.G}
	\FG_{m}(\lambda)
	:=
	\inf\big\{ \Entbm(\Fphi) : \Fphi\in\Entsp, \ \Fphi = \Ff_* \text{ on } \intvl_{m}, \ \norm{\Fphi-\Ff_*}_{\Lsp^\infty(\R)}\geq\lambda \big\}.
\end{align}
It is not hard to check that, if $\Engbm(\intvl,\Ff)<\infty$, 
\begin{align}
	\label{e.Engbm.bd}
	\liminf_{m\to\infty} \FG_{m}(\eps) \gneqq \Entbm(\Ff_*)=\Engbm(\intvl,\Ff),
	\quad
	\text{for all } \eps>0. %\text{ and nonempty } [\alpha,\beta]\subset\R.
\end{align}

\subsection{Proof of Theorem~\ref{t.main}\eqref{t.main.inf}, proving $\leq$ in \eqref{e.t.main}}
\label{s.pfmain.<}
Recall that $\intvl_{m}:=\intvl\cap\{i/m\}_{i\in\Z} $ and that $\Ff_{*\intvl_{m}}$ is the minimizer of the infimum in \eqref{e.Ff*m}. 
Let $\Fh_{*m}(t,x):=\HLbk_{1\to t}[\Ff_{*\intvl_{m}}](x)$ and $\Mmu_{*m}:=\M[\Fh_{*m}]$.
Since $\Ff_{*\intvl_{m}}$ is obtained by parabola-constrained interpolating $(y,\Ff(y))_{y\in\intvl_{m}}$, the function is continuous, piecewise linear or quadratic, and equal to $\Fhh(1,\cdot)$ outside of a bounded interval.
Given these properties, Lemma~\ref{l.hopflax}\eqref{l.hopflax.bk} asserts that $\Fh_{*m}$ is a tractable solution without entropy shocks, so $\Mmu_{*m}=\Mnon[\Fh_{*m}]$ and $\Ment[\Fh_{*m}]=0$.
Also, Proposition~\ref{p.reproducing} gives that $\Hfn[\Mmu_{*m}]=\Fh_{*m}$.
Now apply Proposition~\ref{p.key} to get
\begin{align}
	\label{e.pfmain<.}
	\rate(\Metric_{\Mmu_{*m}}) - 0 
	= \Entbm(\Fh_{*m}(1,\cdot)) 
	= \Entbm(\Ff_{*\intvl_{m}}) 
	= \Engbmn{|\intvl_{m}|}\big(\vec{\intvl}_m,\Ff(\vec{\intvl}_m)\big) 
	\leq
	\Engbm(\intvl,\Ff),
\end{align}
where the last inequality follows because $\intvl_{m}\subset\intvl$.
If $\Ff\notin\Csp(\intvl)$, using \eqref{e.Ff*m.infty} gives the desired result.
If $\Ff\in\Csp(\intvl)$, using \eqref{e.Ff*m.convg} gives $\Metric_{\Mmu_{*m}}(0,0;1,\cdot)\to \Ff_*$ uniformly on $\R$.
This together with the property that $\rate$ is a good rate function gives
\begin{align}
	\label{e.pfmain<..}
	\inf\big\{ \rate(\Metric_{\Mmu}) : \Mmu\in\Msp_+,\ \Metric_{\Mmu}(0,0;1,x) = \Ff(x) \text{ for } x\in\intvl \big\}
	\leq
	\liminf_{m\to\infty} \rate(\Metric_{\Mmu_{*m}}).
\end{align}
Combining \eqref{e.pfmain<.}--\eqref{e.pfmain<..} gives the desired result.

\subsection{Proof of Theorem~\ref{t.main}\eqref{t.main.inf}, proving $\geq$ in \eqref{e.t.main}}
\label{s.pfmain.>}
Having proven the inequality $\leq$ in Section~\ref{s.pfmain.<}, hereafter we assume $\Engbm(\intvl,\Ff)<\infty$, which in particular implies $\Ff\in\Csp(\intvl)$.
Fix an arbitrary minimizer $\Mmu=\sum_{\Pgamma\in\Snet}\dens_{\Pgamma}\delta_{\Pgamma}$ of \eqref{e.t.main} and put $\Fh:=\Hfn[\Mmu]$.
Minimizers exist because $\rate$ is a good rate function.

Let us first construct some approximations.
This construction will be used here and in Sections~\ref{s.pfmain.=h*}.
Fix an arbitrary $\eps>0$.
First, note that for any $c<\infty$, the mapping $\Fphi\mapsto\Entbm(\Fphi)$ is lower semicontinuous on $\Entspp{c}$ under the uniform topology over $[-c,c]$; see \eqref{e.Entspp} and \eqref{e.Entbm}.
Combining this with \eqref{e.Ff*m.convg} gives $ \liminf_{m} \Entbm(\Ff_{*\intvl_{m}}) \geq \Entbm(\Ff_*)=\Engbm(\intvl,\Ff)$.
Given this and \eqref{e.Engbm.bd}, fix a large enough $m$ such that
\begin{align}
	\label{e.pfmain.>.m}
	\Entbm(\Ff_{*\intvl_{m}}) \geq \Engbm(\intvl,\Ff) - \eps,
	\qquad
	\FG_m(\eps) > \Engbm(\intvl,\Ff),
	\ \
	\FG_m \text{ defined in } \eqref{e.Engbm.G}.
\end{align}
For this fixed $m$, take an arbitrary $x_0\in\R$ and add it to $\intvl_{m}$ to get $\SY:=\intvl_{m}\cup\{x_0\}$; see Remark~\ref{r.y0}.
For each $y\in\SY$, take an $\Metric_{\Mmu}$ geodesic $(0,0)\xrightarrow{\Pgeo_y}(1,y)$, and expand the set $\Snet$ to include these $\Pgeo_{y}$s: More precisely, take a $\Snet'\supset\Snet$ such that $\graph(\Snet')\supset\graph(\Pgeo_y)$, for all $y\in\SY$.
Set $\dens_{\Pzeta}:=0$ for all $\Pzeta\in\Snet'\setminus\Snet$ so that the resulting measure does not change, namely $\Mmu=\sum_{\Pzeta\in\Snet'}\dens_{\Pzeta}\delta_{\Pzeta}$.
Now construct the piecewise linear approximation $\Mmu_{n}$ of $(\Mmu,\Snet')$ as in Definition~\ref{d.approx}.
Set $\Fh_{n}:=\Hfn[\Mmu_{n}]$.

\begin{rmk}\label{r.y0}
	Adding $x_0$ is need only in Section~\ref{s.pfmain.=h*} and is not in this section.
\end{rmk}

We now use the preceding approximations to prove the desired result.
First, we argue that
\begin{align}
	\label{e.pfmain>.Fhn=Fh}
	\lim_{n\to\infty} \Fh_{n}(1,y) = \Fh(1,y),\qquad \text{for all }y\in\SY:=\intvl_{m}\cup\{x_0\}.
\end{align}
To see why, recall that $\graph(\Pgeo_{y})\subset\graph(\Snet')$ and that $\Mmu_{n}$ is constructed such that \eqref{e.approx} hold for $\{\Pgamma_1,\Pgamma_2,\ldots\}=\Snet'$.
Hence, there exists a path $(0,0)\xrightarrow{\Pgeo_{y,n}}(1,y)$ such that $\lim_{n}|\Pgeo_{y,n}|_{\Metric_{\Mmu_{n}}}=|\Pgeo_{y}|_{\Metric_{\Mmu}}$.
The left-hand side is bounded above by $\liminf_{n}\Fh_{n}(1,y)$, while the right-hand side is equal to $\Fh(1,y)$.
This prove the inequality $\geq$ in \eqref{e.pfmain>.Fhn=Fh}.
The other inequality follows by Lemma~\ref{l.approx.metric} for $(s,y;s',y')=(0,0;1,y)$.
Next, using Proposition~\ref{p.pwlinear}\eqref{p.pwlinear.rate} and Proposition~\ref{p.key} for $\Mmu\mapsto\Mmu_{n}$ gives $\rate(\Metric_{\Mmu}) -\rate(\Ment[\Fh_n]) \geq \Entbm(\Fh_{n}(1,\cdot))$.	
Bound the last term from below by $\Engbmn{|\SY|}(\vec{\SY},\Fh_{n}(\vec{\SY}))$, and take the $n\to\infty$ limit with the aid of \eqref{e.Engbmn.conti} and \eqref{e.pfmain>.Fhn=Fh}.
Doing so gives
\begin{align}
	\label{e.pfmain>.key.}
	\rate(\Metric_{\Mmu}) -\limsup_{n\to\infty} \rate(\Ment[\Fh_n])
	\geq
	\liminf_{n\to\infty} \Entbm(\Fh_{n}(1,\cdot))
	\geq
	\Engbmn{|\SY|}(\vec{\SY},\Fh(\vec{\SY})).
\end{align}
Recall that $\Fh(1,\cdot)|_{\intvl}=\Ff$, so in particular $\Fh(1,\cdot)=\Ff$ on $\intvl_{m}$.
This together with $\SY\supset\intvl_{m}$ gives $\Engbmn{|\SY|}(\vec{\SY},\Fh(\vec{\SY}))\geq\Engbmn{|\intvl_{m}|}(\vec{\intvl_{m}},\Ff_*(\vec{\intvl_{m}}))=\Entbm(\Ff_{*\intvl_{m}})$.
Combining this with \eqref{e.pfmain>.key.} and the first inequality in \eqref{e.pfmain.>.m} gives
\begin{align}
	\label{e.pfmain>.key..}
	\rate(\Metric_{\Mmu}) -\limsup_{n\to\infty} \rate(\Ment[\Fh_n]) 
	\geq 
	\Engbm(\intvl,\Ff) - \eps.
\end{align}
In particular, $\rate(\Metric_{\Mmu}) \geq \Engbm(\intvl,\Ff) - \eps$.
Since $\eps>0$ was arbitrary, $\rate(\Metric_{\Mmu})\geq\Engbm(\intvl,\Ff)$ follows.

\subsection{Proof of Theorem~\ref{t.main}\eqref{t.main.inf}, proving $\Metric_{\Mmu}(0,0;t,x)=\Fh_*(t,x)$}
\label{s.pfmain.=h*}
Continue with the notation in Section~\ref{s.pfmain.>}.
Our goal here is to prove that $\Fh(t,x)=\Fh_*(t,x)$, for all $(t,x)\in(0,1]\times\R$.
Let us first prove this for $t=1$, namely
\begin{align}
	\label{e.pfmain.=h*.=}
	\Fh_*(1,x) = \Ff_*(x),
	\qquad
	\text{for all } x\in\R.
\end{align}
Since $\Mmu$ is a minimizer, the results of Sections~\ref{s.pfmain.<}--\ref{s.pfmain.>} give $\Engbm(\intvl,\Ff)=\rate(\Metric_{\Mmu})$.
Inserting this into \eqref{e.pfmain>.key.} and forgoing the first limsup there give
\begin{align}
	\label{e.pfmain>.key...}
	\Engbm(\intvl,\Ff)
	\geq
	\liminf_{n\to\infty} \Entbm(\Fh_{n}(1,\cdot))
	\geq
	\Engbmn{|\SY|}(\vec{\SY},\Fh(\vec{\SY})).
\end{align}
Further, since $\Fh(1,y)=\Ff(y)$ for all $y\in\intvl_{m}$ and since $\SY:=\intvl_{m}\cup\{x_0\}$, we have $\Engbmn{|\SY|}(\vec{\SY},\Fh(\vec{\SY}))\geq \FG_m(|\Fh(1,x_0)-\Ff_*(x_0)|)$.
Combining this with \eqref{e.pfmain>.key...} gives $ \Engbm(\intvl,\Ff) \geq \FG_m(|\Fh(1,x_0)-\Ff_*(x_0)|) $.
At the same time, the second inequality in \eqref{e.pfmain.>.m} gives $\FG_m(\eps)>\Engbm(\intvl,\Ff)$.
Since $\FG_m(\lambda)$ is nondecreasing in $\lambda$ (readily seen from \eqref{e.Engbm.G}), we must have $|\Fh(1,x_0)-\Ff_*(x_0)|<\eps$, otherwise the last two inequalities can be combined and a contradiction would follow.
Since $x_0\in\R$ and $\eps>0$ were arbitrary, \eqref{e.pfmain.=h*.=} follows.

Let us prepare a bound that will be needed.
Recall $\Fh_{n}:=\Hfn[\Mmu_n]$ from Section~\ref{s.pfmain.>}.
We argue that
\begin{align}
	\label{e.pfmain.=h*.bd}
	\limsup_{n\to\infty} \norm{ \Fh_{n}(1,\cdot) - \Ff_* }_{\Lsp^\infty(\R)} < \eps.
\end{align}
After passing to a subsequence, either $\Fh_{n}(1,x_n)\to\Ff_*(x_0)+\alpha$ for some $\alpha=\pm\limsup_{n} \norm{ \Fh_{n}(1,\cdot) - \Ff_* }_{\Lsp^\infty(\R)}$ and $x_1,x_2,\ldots\to x_0\in\R$, or $\Fh_{n}(1,x_n)\geq \Fhh(x_n)+\eps/2$ for some $x_1,x_2,\ldots\to\pm\infty$.
If the first case happens with $|\alpha|=\infty$ or the second case happens, it is not hard to show that $\Entbm(\Fh_{n}(1,\cdot))\to\infty$.
This is prohibited by \eqref{e.pfmain>.key...}.
We hence consider the first case with $|\alpha|<\infty$.
For the $m$ fixed in Section~\ref{s.pfmain.>}, set $\SY'_n:=\intvl_{m}\cup\{x_n\}$, write $\Entbm(\Fh_{n}(1,\cdot)) \geq \Engbmn{|\SY'_n|}(\vec{\SY}'_n,\Fh_{n}(1,\vec{\SY}'_n))$ (see \eqref{e.Engbmn}), and take the $n\to\infty$ limit.
When taking the limit, use \eqref{e.Engbmn.conti}, use \eqref{e.pfmain>.Fhn=Fh} for $y\in\intvl_{m}$, and use $\Fh_{n}(1,x_n)\to\Ff_*(x_0)+\alpha$.
Doing so gives $ \liminf_{n}\Entbm(\Fh_{n}(1,\cdot))\geq\FG_{m}(|\alpha|)$.
Combining this with the first inequality in \eqref{e.pfmain>.key...} gives $\Engbm(\intvl,\Ff)\geq\FG_{m}(|\alpha|)$.
From here, the same argument as the last paragraph gives $|\alpha|<\eps$, so \eqref{e.pfmain.=h*.bd} follows.

Let us prove $\Fh\leq\Fh_*$ on $(0,1]\times\R$.
Take any $(t,x)\in(0,1]\times\R$, and consider any path $(t,x)\xrightarrow{\Peta}\{1\}\times\R$.
The metric $\Metric_{\Mmu}$ satisfies $\Metric_{\Mmu}(0,0;1,\Peta(1))\geq\Metric_{\Mmu}(0,0;t,x)+|\Peta|_{\Metric_{\Mmu}}$ and $|\Peta|_{\Metric_{\Mmu}}\geq|\Peta|_{\diri}.$
Recognize $\Metric_{\Mmu}(0,0;t,x)$ as $\Fh(t,x)$ and recognize $\Metric_{\Mmu}(0,0;1,\Peta(1))$ as $\Fh(1,\Peta(1))$, which is equal to $\Ff_*(\Peta(1))$ by \eqref{e.pfmain.=h*.=}.
We have $\Fh(t,x)\leq -|\Peta|_{\diri}+\Ff_*(\Peta(1))$.
Taking the infimum over such $\Peta$s gives
\begin{align}
	\Fh(t,x)
	\leq
	%	\inf\big\{ -|\Peta|_{\diri}+\Ff_*(\Peta(1)) : (t,x)\xrightarrow{\Peta}\{1\}\times\R \big\}
	%	=:
	\HLbk_{1\to t}[\Ff_*](x)
	=:
	\Fh_*(t,x).
\end{align}

Let us prove $\Fh\geq\Fh_*$ on $(0,1]\times\R$.
Recall $\Mmu_{n}$ from Section~\ref{s.pfmain.>}.
Take any $(t,x)\in(0,1]\times\R$, and use Proposition~\ref{p.pwlinear}\eqref{p.pwlinear.path} for $\Mmu\mapsto\Mmu_{n}$ to obtain a path $(t,x)\xrightarrow{\Pgeo_n}\{1\}\times\R$ such that $|\Pgeo_{n}|_{\Ment[\Fh_n]}=\Fh_n(1,\Pgeo_n(1))-\Fh_n(t,x)$.
Since $\Mmu$ is a minimizer, the results of Sections~\ref{s.pfmain.<}--\ref{s.pfmain.>} gives $\Engbm(\intvl,\Ff)=\rate(\Metric_{\Mmu})$.
Combining this with \eqref{e.pfmain>.key..} gives $\limsup_{n} \rate(\Ment(\Fh_{n}))\leq\eps$.
Given this property, apply Lemma~\ref{l.approx.path} to $\{\Pgeo_n\}_{n}$ and $\{\Ment(\Fh_{n})\}_{n}$.
The result gives a path $(t,x)\xrightarrow{\Pgeo}\{1\}\times\R$ such that
\begin{align}
	\eps + |\Pgeo|_{\diri} 
	\geq
	\limsup_{n\to\infty} \big( \Fh_n(1,\Pgeo_n(1))-\Fh_n(t,x) \big)
	\geq
	\liminf_{n\to\infty} \Fh_n(1,\Pgeo_n(1))-\limsup_{n\to\infty} \Fh_n(t,x).
\end{align}
By \eqref{e.pfmain.=h*.bd} and the fact that $\Ff_*$ is continuous, the liminf is at least $\Ff_*(\Pgeo(1))-\eps$.
By Lemma~\ref{l.approx.metric} for $(s,y;s',y')=(0,0;t,x)$, the last limsup is at most $\Fh(t,x)$.
After rearranging terms, we arrive at 
\begin{align}
	\Fh(t,x)
	\geq
	-|\Pgeo|_{\diri} + \Ff_*(1,\Pgeo(1)) - 2\eps
	\geq
	\HLbk_{1\to t}[\Ff_*](x) - 2\eps
	=:
	\Fh_*(t,x) - 2\eps.
\end{align}
Since $\eps>0$ was arbitrary, this completes the proof.

\subsection{Proof of Theorem~\ref{t.main}\eqref{t.main.measure}}
\label{s.pfmain.measure}
We begin with a lemma.
\begin{lem}\label{l.measure.support}
	Take any $\Mmu\in\Msp_+$, set $\Hfn[\Mmu]=\Fh$ and $\Fu:=\partial_{x}\Fh$, assume $\Fu$ is a tractable solution with respect to the tractable partition $(\Stime(\Fu),\Sbdy(\Fu),\Sdom(\Fu))$ of a light cone $\cone(c)$, and consider the $\eps$-thicken set of $\graph(\Sbdy(\Fu))$, namely 
	\begin{align}
		\SB_{\eps}(\Fu) := \{ (t,x): |x-\Pl(t)| \leq \eps, t\in[\start(\Pgamma),\ed(\Pgamma)] \text{ for some } \Pl\in\Sbdy(\Fu) \}.
	\end{align}
	For every $\eps>0$, $\Fh=\Hfn[\,\Mmu\lfloor_{\SB_{\eps}(\Fu)}\,]$.
\end{lem}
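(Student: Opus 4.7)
The plan is to prove the two inequalities $\Hfn[\Mmu\lfloor_{\SB_\eps(\Fu)}]\le\Fh$ and $\Hfn[\Mmu\lfloor_{\SB_\eps(\Fu)}]\ge\Fh$ separately. The first is immediate from monotonicity \eqref{e.monotone.height} applied to $\Mmu\lfloor_{\SB_\eps(\Fu)}\le\Mmu$. The strategy for the reverse is to reduce it to the measure comparison $\Mmu\lfloor_{\graph(\Sbdy(\Fu))}\ge\M[\Fh]$: once this is established, the inclusion $\graph(\Sbdy(\Fu))\subset\SB_\eps(\Fu)$ gives $\Mmu\lfloor_{\SB_\eps(\Fu)}\ge\Mmu\lfloor_{\graph(\Sbdy(\Fu))}\ge\M[\Fh]$, and combining \eqref{e.monotone.height} with the identity $\Fh=\Hfn[\M[\Fh]]$ from Proposition~\ref{p.reproducing} yields $\Hfn[\Mmu\lfloor_{\SB_\eps(\Fu)}]\ge\Hfn[\M[\Fh]]=\Fh$.

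Establishing $\Mmu\lfloor_{\graph(\Sbdy(\Fu))}\ge\M[\Fh]$ amounts to showing that for every $\Pl\in\Sbdy(\Fu)$ with $\Fu_{\Pl-}\ne\Fu_{\Pl+}$, the $\Mmu$-density $\dens_\Pl$ along $\Pl$ matches $\tfrac{1}{16}(\Fu_{\Pl-}-\Fu_{\Pl+})^2$ a.e. The upper bound $\int_{s_1}^{s_2}\dens_\Pl\le\int_{s_1}^{s_2}\tfrac{1}{16}(\Fu_{\Pl-}-\Fu_{\Pl+})^2$ on every subinterval follows by chaining three inequalities: the path-length inequality $\int_{s_1}^{s_2}(\dens_\Pl-\dot\Pl^2)=|\Pl|_{[s_1,s_2]}|_{\Metric_\Mmu}\le\Metric_\Mmu(s_1,\Pl(s_1);s_2,\Pl(s_2))$, the triangle inequality \eqref{e.triangle} $\Metric_\Mmu(s_1,\Pl(s_1);s_2,\Pl(s_2))\le\Fh(s_2,\Pl(s_2))-\Fh(s_1,\Pl(s_1))$, and the identity $\Fh(s_2,\Pl(s_2))-\Fh(s_1,\Pl(s_1))=\int_{s_1}^{s_2}(\tfrac{1}{16}(\Fu_{\Pl-}-\Fu_{\Pl+})^2-\dot\Pl^2)$ from \eqref{e.Fg.dot..}. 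The matching lower bound is equivalent to exhibiting, for each $\Pl$ and $[s_1,s_2]$, a $\Metric_\Mmu$-geodesic (or a sequence of near-geodesics) from $(0,0)$ to $(s_2,\Pl(s_2))$ that hugs $\Pl$ throughout $[s_1,s_2]$; on such a path all three inequalities above become equalities, forcing $\dens_\Pl=\tfrac{1}{16}(\Fu_{\Pl-}-\Fu_{\Pl+})^2$ a.e.\ by arbitrariness of $[s_1,s_2]$.

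The main obstacle is constructing this $\Pl$-hugging near-geodesic. The plan is a backward-walking construction in the spirit of Proposition~\ref{p.pwlinear}\eqref{p.pwlinear.path}: from $(s_2,\Pl(s_2))$, retrace along $\Pl$ to $(s_1,\Pl(s_1))$, then alternate between following boundary paths in $\Sbdy(\Fu)$ and characteristics of $\Fu$ through smooth regions $\SD\in\Sdom(\Fu)$, terminating at $(0,0)$ after finitely many switches thanks to the finiteness of the tractable partition. The length telescopes cleanly on the characteristic segments: on any $\chi\subset\SD$, the same three-step chain combined with $|\chi|_{\Metric_\Mmu}=|\chi|_\diri+\Mmu(\graph(\chi))$ and the identity $\Fh(\text{end})-\Fh(\text{start})=|\chi|_\diri$ (from the proof of Proposition~\ref{p.reproducing}) forces $\Mmu(\graph(\chi))=0$ and $|\chi|_{\Metric_\Mmu}=\Fh(\text{end})-\Fh(\text{start})$. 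On boundary-path segments other than the fixed $\Pl$, however, the analogous telescoping requires the very density identity we are trying to prove, which introduces a circularity; I expect to resolve it by an induction over the partition times $\Stime(\Fu)$ in chronological order, so that the identity on each time slab is established using only the identity already known on earlier slabs and the endpoint values of $\Fh$ at the slab interface.
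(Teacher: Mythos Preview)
Your reduction to $\Mmu\lfloor_{\graph(\Sbdy(\Fu))}\ge\M[\Fh]$ is not just circular---it is \emph{false}. Look at Example~\ref{ex.badM}: there $\Mmu=\beta\,\delta_{\Pgamma_\alpha}+\beta\,\delta_{\Pgamma_{-\alpha}}$, while $\Ment[\Fh]=(\sqrt{\beta}-\alpha)^2\delta_0$ is supported on the constant path $x=0$, where $\Mmu$ has zero mass. Thus $\Mmu\lfloor_{\graph(\Sbdy(\Fu))}\not\ge\M[\Fh]$, even though the lemma's conclusion holds trivially in that example (since $\supp\Mmu\subset\graph(\Sbdy(\Fu))\subset\SB_\eps(\Fu)$ forces $\Mmu\lfloor_{\SB_\eps(\Fu)}=\Mmu$). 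The upper bound you derive, $\dens_\Pl\le\tfrac{1}{16}(\Fu_{\Pl-}-\Fu_{\Pl+})^2$, is correct, but the matching lower bound simply does not hold at entropy shocks; the paper itself flags the analogous statement for non-entropy shocks as an open conjecture (Conjecture~\ref{conj.measures}).

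The paper's proof avoids this entirely by never trying to identify $\Mmu$ along boundary paths. Instead, to show $\Hfn[\Mnu]\ge\Fh$ with $\Mnu:=\Mmu\lfloor_{\SB_\eps(\Fu)}$, it builds a path $(0,0)\to(t_0,x_0)$ whose $\Metric_\Mnu$-length realizes $\Fh(t_0,x_0)$ by alternating two kinds of segments: (i) concatenated characteristics running through smooth regions back to $\graph(\Sbdy(\Fu))$, on which $|\cdot|_{\Metric_\Mnu}\ge|\cdot|_\diri=\Fh(\text{end})-\Fh(\text{start})$; and (ii) segments of an actual $\Metric_\Mmu$-geodesic that lie inside $\SB_\eps(\Fu)$, on which $\Mmu$ and $\Mnu$ agree, so $|\cdot|_{\Metric_\Mnu}=|\cdot|_{\Metric_\Mmu}=\Fh(\text{end})-\Fh(\text{start})$. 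The $\eps$-thickening is exactly what makes step~(ii) work: one follows the $\Metric_\Mmu$-geodesic backward only until it first exits $\SB_\eps(\Fu)$, then switches to step~(i). No knowledge of $\dens_\Pl$ is ever invoked.
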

\begin{proof}
First, we claim that, for every $(t,x)\in(0,1]\times\R$, there exists a path $\Sbdy(\Fu)\cup\{(0,0)\}\xrightarrow{\Peta}(t,x)$ such that $|\Peta|_{\diri}=\Fh(t,x)-\Fh(\start,\Pgeo(\start))$, where $\start=\start(\Peta)$, and the path $\Peta$ is obtained by concatenating the characteristics of $\Fu$.
If $(t,x)\in\graph(\Sbdy(\Fu))$, simply take $\Peta$ as the degenerate path with $[\start(\Peta),\ed(\Peta)]=[t,t]$ and $\Peta(t)=x$.
Otherwise, let us construct $\Peta$ by going backward in time.
Starting from $(t,x)\notin\graph(\Sbdy(\Fu))$, we going back in time by following a characteristic $\Pchi$ of $\Fu$.
\begin{itemize}%[leftmargin=15pt]
	\item If $(t,x)\in\SD$ for some $\SD\in\Sdom(\Fu)$, or if $(t,x)\notin\cone(c)$, such a characteristic $\Pchi$ indeed exists.
	\item Otherwise $(t,x)$ lies along the upper boundary (picturing time as the vertical axis) of some $\SD\in\Sdom(\Fu)$.
	Take a sequence of points $(t_k,x_k)\in\SD$ that approaches $(t,x)$.
	Since $\Fu\in\Lsp^\infty(\SD)$, after being passed to a subsequence, $\Fu(t_k,x_k)$ converges to a finite limit.
	Each $(t_k,x_k)$ lies along a characteristic, which has velocity $-\frac{1}{2}\Fu(t_k,x_k)$ by \eqref{e.characteristics}.
	Taking the limit of these characteristics gives a characteristic $\Pchi$ that is connected to $(t,x)$.
\end{itemize}
Follow $\Pchi$ backward in time until it reaches $(0,0)$ or $\partial\SD$, and let $s\in[0,t)$ denote the time when this happens.
Using $\Fh=\frac{1}{4}\Fv^2$ outside of $\graph(\Sbdy(\Fu))$ and using \eqref{e.characteristics}, we calculate $\Fh(t,x)-\Fh(s,\Pchi(s))=|\Pchi|_{[s,t]}|_{\diri}$. 
Now, if $(s,\Pchi(s))\notin\graph(\Sbdy(\Fu))\cup\{(0,0)\}$, repeat this process until reaching $\graph(\Sbdy(\Fu))\cup\{(0,0)\}$ and concatenate the resulting characteristics.
Doing so gives $\Peta$.
	
We now turn to proving the statement of this lemma.
Put $\Mnu:=\Mmu\lfloor_{\SB_{\eps}(\Fu)}$ 
Since $\Mmu\geq\Mnu$, by \eqref{e.monotone.height} we have $\Fh\geq\Hfn[\Mnu]$.
Hence it suffices to prove $\Hfn[\Mnu]\geq\Fh$.
Taking any $(t_0,x_0)\in(0,1]\times\R$, we prove this by constructing a path $(0,0)\xrightarrow{\Pgeo}(t_0,x_0)$ such that $|\Pgeo|_{\Metric_\Mnu}\geq\Fh(t_0,x_0)$.
First, use the preceding claim to find $\graph(\Sbdy(\Fu))\cup\{(0,0)\}\xrightarrow{\Peta_1}(t_0,x_0)$ such that $|\Peta_1|_{\diri}=\Fh(t,x)-\Fh(\start_1,\Peta_1(\start_1))$, where $\start_1:=\start(\Peta_1)$.
Next, take an $\Metric_{\Mmu}$ geodesic $(0,0)\xrightarrow{\Pgeo_1}(\start_1,\Peta_1(\start_1))$, let $t_1:=\sup\{ t\in[0,\start_1] : (t,\Pgeo_1(t))\notin\SB_{\eps}(\Fu) \}$, and put $x_1:=\Pgeo(t_1)$.
Since $\Pgeo_1$ is an $\Metric_{\Mmu}$ geodesic and since $\Fh=\Hfn[\Mmu]$, we have $|\Pgeo_1|_{[t_1,\start_1]}|_{\Metric_{\Mmu}}=\Fh(\start_1,\Peta_1(\start_1))-\Fh(t_1,x_1)$.
Further, since $\graph(\Pgeo_1|_{[t_1,\start_1]})\subset\SB_{\eps}(\Fu)$, we have $|\Pgeo_1|_{[t_1,\start_1]}|_{\Metric_{\Mmu}}=|\Pgeo_1|_{[t_1,\start_1]}|_{\Metric_{\Mnu}}$.
Now concatenate the paths $\Pgeo_1|_{[t_1,\start_1]}$ and $\Peta_1|_{[\start_1,t_0]}$.
The proceeding properties give
\begin{align}
	\big|\, \Pgeo_1|_{[t_1,\start_1]} \cup \Peta_1|_{[\start_1,t_0]}\,\big|_{\Metric_{\Mnu}}
	&\geq
	\big|\Pgeo_1|_{[t_1,\start_1]}\big|_{\Metric_{\Mnu}} + \big|\Peta_1|_{[\start_1,t_0]}\big|_{\diri}
	=
	\Fh(t_0,x_0) - \Fh(t_1,x_1).
\end{align}
Taking $(t_1,x_1)$ as the new starting point and iterating this process give the desired $\Pgeo$.
\end{proof}

Let us prepare some notation.
Set $\Fu_*:=\partial_{x}\Fh_*$ and let $\Mmu$ be a minimizer as before.
The assumption here (in Theorem~\ref{t.main}\eqref{t.main.measure}) is that $\Ff$ is continuous, and piecewise linear or quadratic on $\intvl$, and our goal is to prove $\Mmu=\M[\Fh_*]$.
The function $\Ff_*\in\Csp(\R)$, obtained from parabola-constrained interpolating the graph of $\Ff$, is piecewise linear or quadratic and equal to $\Fhh$ outside of a bounded interval.
By Lemma~\ref{l.hopflax}\eqref{l.hopflax.bk}, $\Fu_*$ is a tractable solution without entropy shocks.

Next, we perform a few reductions.
The result of Section~\ref{s.pfmain.=h*} gives $\Fh:=\Hfn[\Mmu]=\Fh_*$, so Lemma~\ref{l.measure.support} implies that $\Hfn[\,\Mmu\lfloor_{\SB_{\eps}(\Fu_*)}\,]=\Fh_*$.
This property forces $\Mmu=\Mmu\lfloor_{\SB_{\eps}(\Fu_*)}$, otherwise $\Mmu>\Mmu\lfloor_{\SB_{\eps}(\Fu_*)}$, which implies $\rate(\Metric_{\Mmu})>\rate(\Metric_{\Mmu\lfloor_{\SB_{\eps}(\Fu_*)}})$ by \eqref{e.monotone} and contradicts the condition that $\Mmu$ is a minimizer.
Since $\eps>0$ is arbitrary, it follows that $\Mmu$ is supported on $\graph(\Sbdy(\Fu_*))$.
Next, take any $\Pl\in\Sbdy(\Fv)$ and any open interval $\SO\subset(\start(\Pl),\ed(\Pl))$ such that $ \Fu_{*\Pl-}>\Fu_{*\Pl+}$ on $\SO$.
It suffices to prove that
\begin{align}
	\label{e.p.comparing.measure.goal}
	\text{for any } t\in\SO \text{ and any } \Metric_{\Mmu} \text{ geodesic } (0,0)\xrightarrow{\Pgeo}(t,\Pl(t)),
	\qquad
	\Pgeo|_{[0,t]} = \Pl|_{[0,t]}.
\end{align}
Once \eqref{e.p.comparing.measure.goal} is proven, since $\Fh_*=\Metric_\mu(0,0;\cdot,\cdot)$, it follows that $\Fh_*(t,\Pl(t)) = | \Pl|_{[s,t]} |_{\Metric_{\Mmu}} + \Fh_*(s,\Pl(s))$, for all $s<t\in \SO$.
This implies $\Mmu\lfloor_{\graph(\Pl|_\SO)}=\M[\Fh_*]\lfloor_{\graph(\Pl|_\SO)}$, and this holds all $\SO$ described above.
Since $\Mmu$ is supported on $\graph(\Sbdy(\Fu_*))$ and since $\Fu_*$ has no entropy shocks, it follows that $\Mmu=\M[\Fh_*]$.

To prove \eqref{e.p.comparing.measure.goal}, we argue by contradiction.
Assume the opposite of \eqref{e.p.comparing.measure.goal}: There exist $t_0\in\SO$, an $\Metric_{\Mmu}$ geodesic $(0,0)\xrightarrow{\Pgeo}(t_0,\Pl(t_0))$, a region $\SD\in\Sdom(\Fu_*)$, and an $s_3\in\SO\cap(0,t_0]$ such that $\Pgeo|_{(s_3-\lambda,s_3)}\subset\SD$, for some $\lambda>0$, and that $\Pgeo(s_3)=\Pl(s_3)$; see Figure~\ref{f.contradiction1}.
Combining the property that $ \Fu_{*\Pl-}>\Fu_{*\Pl+}$ on $\SO$ with \eqref{e.rankine.hugoniot} for $\Fv\mapsto\Fu_*$ gives
\begin{align} 
	\label{e.p.comparing.measure.contradict}
	-\tfrac{1}{2}\Fu_{*\Pl-} < \dot{\Pl} < -\tfrac{1}{2}\Fu_{*\Pl+} \text{ on } \SO.
\end{align}
Since $\Fu_*$ is a $\Csp^1(\SD)$ solution of Burgers' equation, every point in $\SD$ lies along a characteristic of $\Fu_*$.
This property together with \eqref{e.p.comparing.measure.contradict} implies that all points on $\graph(\Pgeo|_{[0,s_3)})\cap\SD$ that are sufficiently closed to $(s_3,\Pgeo(s_3))$ are connected back to $\Pl$ by characteristics; see Figure~\ref{f.contradiction1}.
Take one such point $(s_2,\Pgeo(s_2))$ and the corresponding characteristic $(s_1,\Pgeo(s_1))\xrightarrow{\Pchi}(s_2,\Pgeo(s_2))$.
Assume $s_2$ is sufficient closed to $s_3$ such that $\graph(\Pchi|_{(s_1,s_2]})\subset\SD$ and that $s_1\in\SO$.

Given the preceding setup, we now argue for a contradiction.
First, since $\Pgeo$ is an $\Metric_{\Mmu}$ geodesic and since $\Fh_*=\Metric_\mu(0,0;\cdot,\cdot)$,
\begin{align}
	\label{e.p.comparing.measure.contradict.1}
	\Fh_*(s_3,\Pgeo(s_3)) 
	= 
	\Fh_*(s_2,\Pgeo(s_2)) + \big|\Pgeo|_{[s_2,s_3]}\big|_{\Metric_{\Mmu}}
	=
	\Fh_*(s_2,\Pgeo(s_2)) + \big|\Pgeo|_{[s_2,s_3]}\big|_{\diri},	
\end{align}
where the second equality follows because $\Mmu$ is supported on $\graph(\Sbdy(\Fu_*))$.
Next, since $\Pchi$ is a characteristic, using $\Fh_*=\frac{1}{4}\Fu_*^2$ within $\SD$ and \eqref{e.characteristics}, we have
$
\Fh_*(s_2,\Pgeo(s_2)) - \Fh_*(s_1,\Pgeo(s_1)) = |\Pchi|_{[s_1,s_2]}|_{\diri}.
$
Inserting this to \eqref{e.p.comparing.measure.contradict.1} gives
\begin{align}
	\label{e.p.comparing.measure.contradict.3}
	\Fh_*(s_3,\Pgeo(s_3)) 
	= 
	\Fh_*(s_1,\Pgeo(s_1)) + \big|\Pchi|_{[s_1,s_2]}\cup \Pgeo|_{[s_2,s_3]}\big|_{\diri}.
\end{align}
We finish the proof in two separate cases.
\begin{description}%[leftmargin=10pt]
\item[Case~1. The path $\Peta:=\Pchi\cup \Pgeo|_{[s_2,s_3]}$ is linear]
Consider $\Peta(s_2)<\Pl(s_2)$; the other case is similar.
In this case $\Peta$ is a characteristic and intersects with $\Pgeo$ at $(s_3,\Pgeo(s_3))$, so $\Fu_{*\Pl-}(s_3)=\dot{\Peta}$.
Further, $\Peta|_{(s_2,s_3)}$ lies entirely to the left of $\Pl|_{(s_2,s_3)}$ and $\Peta(s_3)=\Pgeo(s_3)$; see Figure~\ref{f.contradiction2}.
These properties together give $\dot{\Peta}\leq\dot{\Pl}(s_3)$.
This, together with $\Fu_{*\Pl-}(s_3)=\dot{\Peta}$, contradicts \eqref{e.p.comparing.measure.contradict}.

\item[Case~2. The path $\Peta:=\Pchi\cup \Pgeo|_{[s_2,s_3]}$ is not linear]
In this case, \eqref{e.p.comparing.measure.contradict.3} implies
\begin{align}
	\label{e.p.comparing.measure.contradict.4}
	\Fh_*(s_3,\Pgeo(s_3)) 
	\lneqq
	\sup\big\{ \Fh_*(s_1,\Pgeo(s_1)) + |\Pzeta|_{\diri} : (s_1,\Pgeo(s_1)) \xrightarrow{\Pzeta} (s_3,\Pgeo(s_3))  \big\}.
\end{align}
On the right-hand side, write $\Fh_*(s_1,\Pgeo(s_1))=\Metric_{\Mmu}(0,0;s_1,\Pgeo(s_1))$ and bound $|\Pzeta|_{\diri}\leq|\Pzeta|_{\Metric_{\Mmu}}$.
We see that the right-hand side is bounded by $\Metric_{\Mmu}(0,0;s_3,\Pgeo(s_3))$.
This is equal to $\Fh_*(s_3,\Pgeo(s_3))$, leading to a contradiction.
\end{description}

\subsection{Proof of Corollary~\ref{c.main}}
\label{s.pfmain.cor}
Let us prove Part~\eqref{c.main.measure}.
Fix $r'>0$, recall $\EventT(r)$ from Corollary~\ref{c.main}, and consider 
$
\FF(r) := \inf\{ \rate(\metric) : \norm{\metric(0,0,1,\cdot)-\Ff}_{\Lsp^\infty(\intvl)}<r,\  \norm{\metric-\Metric_{\Mmu_*}}_{\Lsp^\infty(\SB)} > r' \}.
$
The LDP from \cite{das24} gives
\begin{align}
	\label{e.c.main.1}
	&\limsup_{r\to 0} \limsup_{\eps\to 0} \big| \eps^{-3/2} \log \P\big[ \EventT(r) \big] + \rate_{0,0;1,\intvl}(\Ff) \big| = 0,
	\\
	\label{e.c.main.2}
	&\liminf_{r\to 0} \liminf_{\eps\to 0} \eps^{-3/2} \log \P\big[ \EventT(r) \cap \{ \norm{\landscape_\eps-\Metric_{\Mmu_*}}_{\Lsp^\infty(\SB)} > r' \} \big] 
	\geq 
	-\liminf_{r\to 0}\FF(r). 
\end{align}
By Theorem~\ref{t.main}\eqref{t.main.measure}, the infimum \eqref{e.t.main} has as unique minimizer $\Metric_{\Mmu_*}$.
Using this and the property that $\rate$ is a good rate function, one deduces that $\liminf_{r\to 0}\FF(r) > \rate_{0,0;1,\intvl}(\Ff)$.
Combining this with \eqref{e.c.main.1}--\eqref{e.c.main.2} gives the desired result.

The proof of Part~\eqref{c.main.inf} is similar.
The major difference is that, for Part~\eqref{c.main.inf}, the uniqueness of $\Mmu$ is not given. 
Yet, by Theorem~\ref{t.main}\eqref{t.main.inf}, $\Hfn[\Mmu]=\Fh_*$ for every minimizer $\Mmu$.
This suffices for the proof, since Part~\eqref{c.main.inf} only concerns the marginal $\landscape_{\eps}(0,0;\cdot,\cdot)$.

\subsection{Proof of Proposition~\ref{p.multi}}
\label{s.pfmain.multi}
Fix any minimizer $\Mmu$ of \eqref{e.rate.multi}.
Since $\Mmu$ satisfies the condition in the infimum \eqref{e.rate.multi}, for each $y\in\SY$, there exists a $z_y\in\SZ$ and an $\Metric_{\Mmu}$ geodesic $(0,z_y)\xrightarrow{\Pgeo_{y}}(1,y)$ such that $|\Pgeo_y|_{\Metric_{\Mmu}}+\Fg(z_y)=\Ff(y)$.
The choice of $(z_y,\Pgeo_{y})$ may not be unique, and we argue that they can be chosen to satisfy the non-intersecting property: $\graph(\Pgeo_{y})\cap\graph(\Pgeo_{y'})=\emptyset$ whenever $z_y\neq z_{y'}$.
To see how, put the pairs $(y,y')\in\SY^2$, $ y<y'$, under the lexicographic order of $\R\times\R$. 
Consider the smallest pair $(y,y')$ --- under the lexicographic order --- that violates the non-intersecting property, namely $\graph(\Pgeo_{y})\cap\graph(\Pgeo_{y'})\neq\emptyset$ and $z_y\neq z_{y'}$.
Let $\und{t}:=\inf\{ t\in[0,1]: \Pgeo_{y}(t)= \Pgeo_{y'}(t) \}$.
We must have $|\Pgeo_y|_{[0,\und{t}]}|_{\Metric_{\Mmu}}+\Fg(z_y)=|\Pgeo_{y'}|_{[0,\und{t}]}|_{\Metric_{\Mmu}}+\Fg(z_{y'})$.
Otherwise, say the inequality $>$ held, adding $|\Pgeo_{y'}|_{[\und{t},1]}|_{\Metric_{\Mmu}}$ to both sides would give
\begin{align}
	\big|\,\Pgeo_y|_{[0,\und{t}]}\,\big|_{\Metric_{\Mmu}}
	+
	\big|\,\Pgeo_{y'}|_{[\und{t},1]}\,\big|_{\Metric_{\Mmu}}+\Fg(z_y)>\Ff(y').
\end{align}
On the left-hand side, concatenating paths $\Peta:=\Pgeo_y|_{[0,\und{t}]}\cup\Pgeo_{y'}|_{[\und{t},1]}$ and writing the result as $|\Peta|_{\Metric_{\Mmu}}+\Fg(z_y)$, we see that the left-hand side is bounded above by $\max_{z\in\SZ}\{ \Metric_{\Mmu}(0,z;1,y')+\Fg(y') \}$.
This contradicts the condition in the infimum in \eqref{e.rate.multi}.
Hence $|\Pgeo_y|_{[0,\und{t}]}|_{\Metric_{\Mmu}}+\Fg(z_y)=|\Pgeo_{y'}|_{[0,\und{t}]}|_{\Metric_{\Mmu}}+\Fg(z_{y'})$.
Given this equality, we make a new choice $(z_{y'},\Pgeo_{y'})\mapsto (z_{y},\Pgeo_{y}|_{[0,\und{t}]}\cup\Pgeo_{y'}|_{[\und{t},1]})$ to satisfy the non-intersecting property.
Repeat this procedure for all pairs $(y,y')\in\SY^2$, $ y<y'$, in the lexicographic order.
Doing so produces $(z_y,\Pgeo_{y})_{y\in\SY}$ that satisfies the non-intersecting property.
Let $\SY_{z}:=\{y: z_{y}=z\}$, $\SQ_z:=\cup_{y\in\SY_{z}}\graph(\Pgeo_{y})$, and $\Mmu_{z}:=\Mmu\lfloor_{\SQ_{z}}$.
By construction, these $\Mmu_{z}$s satisfy Conditions~\eqref{p.mult.disjoint}--\eqref{p.mult.=} in Proposition~\ref{p.multi}, and Condition~\eqref{p.mult.<} holds because $\Mmu_{z}\leq \Mmu$.
Let $\Mnu:=\sum_{z\in\SZ} \Mmu_{z}$, which is $\leq \Mmu$ by construction.
Given the preceding properties, we must have $\Mmu=\Mnu$, otherwise $\Mnu$ satisfies the condition in \eqref{e.rate.multi} with $\Mnu<\Mmu$, which implies $\rate(\Metric_{\Mnu})<\rate(\Metric_{\Mmu})$ by \eqref{e.monotone} and contradicts the assumption that $\Mmu$ minimizes \eqref{e.rate.multi}.
This completes the proof.

\bibliographystyle{alpha}
\bibliography{kpz-ldp,moments}
	
\end{document}